\documentclass[12pt]{article}

\usepackage{enumerate}
\usepackage{amsthm}
\usepackage{amssymb}
\usepackage{amsmath}
\usepackage{amsfonts}
\usepackage{cancel}
\usepackage{times}
\usepackage{bbm}
\usepackage[noadjust]{cite}
\usepackage{color, soul}
\usepackage{hyperref}
\usepackage{wasysym}
\hypersetup{
     colorlinks   = true,
     citecolor    = black,
     linkcolor    = blue
}

\usepackage[paper=a4paper, top=2.5cm, bottom=2.5cm, left=2.5cm, right=2.5cm]{geometry}

\def\qed{\hfill $\vcenter{\hrule height .3mm
\hbox {\vrule width .3mm height 2.1mm \kern 2mm \vrule width .3mm
height 2.1mm} \hrule height .3mm}$ \bigskip}

\def \RR {\mathbb R}
\def \NN {\mathbb N}
\def \EE {\mathbb E}
\def \ZZ {\mathbb Z}

\def \PP {\mathbb P}
\def \eps {\varepsilon}

\newtheorem{theorem}{Theorem}
\newtheorem{lemma}{Lemma}

\newtheorem{proposition}{Proposition}
\newtheorem{corollary}[theorem]{Corollary}
\theoremstyle{definition}
\newtheorem{definition}[theorem]{Definition}
\theoremstyle{remark}
\newtheorem{remark}[theorem]{Remark}
\newtheorem{example}[theorem]{Example}
\newtheorem*{remark*}{Remark}

\long\def\symbolfootnotetext[#1]#2{\begingroup
\def\thefootnote{\fnsymbol{footnote}}\footnotetext[#1]{#2}\endgroup}

\begin{document}
\title{Anti-concentration of polynomials: dimension-free covariance bounds and decay of
Fourier coefficients}
\author{Itay Glazer\\
	Northwestern University \and Dan Mikulincer \thanks{DM is partially supported by a European Research Council grant no. 803084}\\
Weizmann Institute of Science}
\date{\today}

\maketitle
\global\long\def\NN{\mathbb{N}}%
\global\long\def\RR{\mathbb{\mathbb{R}}}%
\global\long\def\PP{\mathbb{\mathbb{P}}}%
\global\long\def\eps{\mathbb{\varepsilon}}%
\global\long\def\EE{\mathbb{E}}%
\global\long\def\ZZ{\mathbb{Z}}%

\begin{abstract}
We study random variables of the form $f(X)$, when $f$ is a degree
$d$ polynomial, and $X$ is a random vector on $\RR^{n}$, motivated
towards a deeper understanding of the covariance structure of $X^{\otimes d}$.
For applications, the main interest is to bound $\mathrm{Var}(f(X))$
from below, assuming a suitable normalization on the coefficients
of $f$. Our first result applies when $X$ has independent coordinates,
and we establish dimension-free bounds. We also show that the assumption
of independence can be relaxed and that our bounds carry over to uniform
measures on isotropic $L_{p}$ balls. Moreover, in the case of the
Euclidean ball, we provide an orthogonal decomposition of $\mathrm{Cov}(X^{\otimes d})$.
Finally, we utilize the connection between anti-concentration and
decay of Fourier coefficients to prove a high-dimensional analogue
of the van der Corput lemma, thus partially answering a question posed
by Carbery and Wright. 
\end{abstract}

\section{Introduction}

Let $X\sim\mu$ be a random vector in $\RR^{n}$. Fix $d\in\NN$ and
consider the tensor power $X^{\otimes d}$, which is a random vector
in $(\RR^{n})^{\otimes d}$. The main motivation for this present
study came from trying to understand the spectrum of the covariance
matrix, $\mathrm{Cov}(X^{\otimes d})$. This question has lately gained
interest in the study of central limit theorems for tensor powers
(\cite{mikulincer2021clt,nourdin2018asymptotic,nourdin2021gaussian,bubeck2018entropic,fang2020new})
with connections to random geometric graphs (\cite{bubeck2016testing,brennan2020phase,brennan2021finetti,liu2021phase}
and universality of neural networks (\cite{eldan2021non}).

Specifically, we are interested in identifying regimes where the smallest,
non-trivial, eigenvalue of $\mathrm{Cov}(X^{\otimes d})$ can be bounded
from below in a \emph{dimension-free} way. We remark that corresponding
bounds for the largest eigenvalue can be proved in a straightforward
manner using standard concentration techniques, and that, typically,
one cannot expect to obtain dimension-free bounds (see \cite[Lemma 4]{eldan2021non}
and the remark that follows).

Observe that $\mathrm{Cov}(X^{\otimes d})$ is an $n^{d}\times n^{d}$
matrix, which is necessarily singular due to symmetries. Thus, we
slightly abuse notations and consider $X^{\otimes d}$ as a random
element in $\mathrm{Sym}_{d}(\RR^{n})$, the subspace of symmetric
tensors. Note that even if $\mathrm{Cov}(X)$ is simple, say if $X$
is isotropic, $\mathrm{Cov}(X^{\otimes d})$ can be quite complicated
because of the introduced dependencies.\\

To set the stage for our results, we now rephrase the problem as a
question about anti-concentration of polynomials. Introduce the multi-indices
$(I_{1},\dots,I_{n})=I\in\NN^{n}$, for which we use the standard
multi-index notation. For $(x_{1},\dots x_{n})=x\in\RR^{n}$, 
\[
\left|I\right|=\sum\limits _{i=1}^{n}I_{i}\text{ and }x^{I}=\prod_{i=1}^{n}x_{i}^{I_{i}}.
\]

We fix a standard orthonormal basis for $\mathrm{Sym}_{d}(\RR^{n})$,
indexed by the multi-indices, $\{e_{I}\}_{|I|=d}.$ To bound the eigenvalues
of $\mathrm{Cov}(X^{\otimes d})$ from below it will be enough to
show that if $v\in\mathrm{Sym}_{d}(\RR^{n})$ is a unit vector, then
$\mathrm{Var}(\langle v,X^{\otimes d}\rangle)$ is large. Write 
\[
v=\sum\limits _{\left|I\right|=d}v_{I}e_{I}\text{ with }\sum\limits _{\left|I\right|=d}v_{I}^{2}=1.
\]
Let us define the homogeneous degree $d$ polynomial $f:\RR^{n}\to\RR$,
by 
\[
f(x)=\sum\limits _{\left|I\right|=d}v_{I}x^{I}=\langle x^{\otimes d},v\rangle.
\]
Hence, $\mathrm{Var}(\langle v,X^{\otimes d}\rangle)=\mathrm{Var}(f(X))$.
From this perspective, our original question reduces to showing that
if $f$ is a homogeneous polynomial, such that the square of its coefficients
sums to $1$, then $f(X)$ cannot be too concentrated around its expectation.

The phenomenon of anti-concentration is further manifested through
sublevel set and Fourier estimates. A polynomial $f(X)$ which is
not too concentrated around any point is expected to have a low probability
of being contained in a small interval in $\RR$, and to have fast
decay of Fourier coefficients (see the discussion in Section \ref{subsec:Decay-of-Fourier}).
We explore all of the above in this paper.

Our main results are summarized below:
\begin{itemize}
\item We show that if $\mu$ is a product measure, one can bound $\mathrm{Var}(f(X))$
in a way that depends only on the degree $d$ and the marginal of
the measure $\mu$. Moreover, the bound is uniform over isotropic
log-concave measures. The result also applies to non-homogeneous polynomials,
under some appropriate assumption concerning the coefficients of $f$. 
\item To allow some form of dependence, we also consider the case where
$\mu$ is the uniform measure on an isotropic $L_{p}$ ball and obtain
corresponding results. 
\item In case $X$ is uniformly distributed on the isotropic Euclidean ball, or more generally, when $X$ is radially symmetric,
we completely characterize the spectrum of $\mathrm{Cov}(X^{\otimes d})$
and express the eigenvectors in terms of the spherical harmonics. 
\item When specializing to log-concave measures, we also establish sublevel
estimates. Namely, not only is $\mathrm{Var}(f(X))$ large, but for
$\eps>0$, one can control, 
\[
\PP\left(\left|f(X)\right|\leq\eps\right).
\]
\item We apply our results to log-concave product measures and derive a
dimension-free multivariate analogue of the classical van der Corput
lemma for polynomials (cf. \cite[Section 7]{CCW99}). Informally,
let $f$ be a polynomial of degree $d$ that has at least one large
coefficient, which corresponds to a monomial of degree $d$. Then,
if $\mu$ is a log-concave product measure, the Fourier coefficients
of $f_{*}\mu$ decay rapidly. When considering the cube, this gives
a partial answer to a question asked by Carbery and Wright in \cite{carbery2001distributional}. 
\end{itemize}
\paragraph{Acknowledgments: }
We are indebted to an anonymous referee for spotting a mistake in Lemma \ref{lem:logconcavevdc} in an earlier version, as well as for many useful comments.
\section{Main results and related work}

Before stating our main results let us first introduce some notation
and definitions.

\subsection{Definitions, notation and conventions}

If $f:\RR^{n}\to\RR$, $f(x)=\sum\limits _{\left|I\right|\leq d}\alpha_{I}x^{I}$
is a degree $d$ polynomial, we define its \emph{$d$-level content}
as, 
\[
\mathrm{coeff}_{d}(f):=\sqrt{\sum\limits _{\left|I\right|=d}\alpha_{I}^{2}}.
\]
As will become apparent, $\mathrm{coeff}_{d}(f)$ may serve as a scale
parameter to measure the variance of the push-forward measure $f_{*}\mu$.

If $\nu$ is a measure on $\RR$, we will denote by $\nu^{\otimes n}$
its $n$-fold tensor product, which is a product measure on $\RR^{n}$.
We say that a measure on $\RR^{n}$ is \emph{isotropic} if it is centered
and its covariance matrix is the identity. If $\mu$ is of the form
$\mu=e^{-\varphi(x)}dx$ for some convex function $\varphi$, we will
say that $\mu$ is \emph{log-concave}. 

As a convention, an absolute constant will be denoted by $C,C'$,
etc. A constant depending on a given data will be denoted using subscript,
e.g. $C_{d}$ (resp. $C_{d,n}$) is a constant depending only on $d$
(resp. $d$ and $n$). Still, when formulating the main results, to
maximize clarity, we will state the precise dependence of the constants
on the data.

\subsection{Variance bounds for polynomials}

Our first main result deals with product measures. \begin{theorem}
\label{thm:product} Let $\mu$ be a centered measure on $\RR$ whose
support is infinite and let $d\in\NN$. Then:
\begin{enumerate}
\item There exists a constant $C_{\mu,d}$, which depends on $\mu$ and
$d$ only, such that for every $n\in\NN$ and every polynomial $f:\RR^{n}\to\RR$
of degree $d$, 
\[
\mathrm{Var}_{\mu^{\otimes n}}(f)\geq C_{\mu,d}\cdot\mathrm{coeff}_{d}^{2}(f).
\]
\item If $\mu$ is also log-concave and isotropic, one may always take $C_{\mu,d}=\frac{1}{2^{15d}}$. 
\end{enumerate}
\end{theorem}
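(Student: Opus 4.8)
The plan is to diagonalize the quadratic form $\mathrm{Var}_{\mu^{\otimes n}}(\cdot)$ on the space of polynomials of degree $\le d$ using the orthogonal polynomials of $\mu$; the point is that in this basis the cross terms vanish and the restriction to the \emph{top-degree} coordinates already dominates $\mathrm{coeff}_d^2(f)$, with a constant that does not see $n$ at all. First assume $\mu$ has finite moments of all orders (the general case follows by truncating $\mu$ to $[-R,R]$, renormalizing to $\mu_R$, and letting $R\to\infty$, using that $\mathrm{Var}_{\mu_R^{\otimes n}}(f)\to\mathrm{Var}_{\mu^{\otimes n}}(f)$ with the usual $\infty$-conventions and that the constant produced below for $\mu_R$ stays bounded away from $0$ for all large $R$). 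Let $\tilde P_0=1,\tilde P_1,\tilde P_2,\dots$ be the monic orthogonal polynomials of $\mu$, with $\deg\tilde P_k=k$, and put $D_k:=\norm{\tilde P_k}_{L^2(\mu)}$. Since $\mathrm{supp}(\mu)$ is infinite, a polynomial of degree $\le k$ vanishing $\mu$-a.e. is identically $0$, whence $D_k>0$; moreover the extremal property of monic orthogonal polynomials gives the identity $D_k=\mathrm{dist}_{L^2(\mu)}\!\big(t^k,\mathcal{P}_{k-1}\big)$, where $\mathcal{P}_{k-1}$ denotes the polynomials of degree $<k$. This is the only feature of $\mu$ that will enter the final constant.

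For the product measure, the tensor products $\tilde P_I(x):=\prod_{i=1}^n\tilde P_{I_i}(x_i)$, $|I|\le d$, are pairwise orthogonal in $L^2(\mu^{\otimes n})$ with $\norm{\tilde P_I}^2=\prod_i D_{I_i}^2$, and they form a basis of the space of polynomials of degree $\le d$. Writing $f=\sum_{|I|\le d}c_I\tilde P_I$ and using $\EE_{\mu^{\otimes n}}[\tilde P_I]=0$ for $I\ne 0$, we get
\[
\mathrm{Var}_{\mu^{\otimes n}}(f)=\sum_{I\ne 0}c_I^2\prod_{i=1}^n D_{I_i}^2\ \ge\ \sum_{|I|=d}c_I^2\prod_{i=1}^n D_{I_i}^2 .
\]
Now compare leading coefficients: each $\tilde P_I$ is monic, so $\tilde P_I=x^I+(\text{terms of total degree}<|I|)$, hence the degree-$d$ part of $f$ is exactly $\sum_{|I|=d}c_I x^I$, and therefore $c_I=\alpha_I$ for $|I|=d$, where $f=\sum_{|I|\le d}\alpha_I x^I$. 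This yields the key inequality $\mathrm{Var}_{\mu^{\otimes n}}(f)\ge\sum_{|I|=d}\alpha_I^2\prod_{i=1}^n D_{I_i}^2$. For any $I$ with $|I|=d$, at most $d$ of the $I_i$ are nonzero, each such $I_i$ lies in $\{1,\dots,d\}$, and $D_0=1$; hence $\prod_i D_{I_i}^2\ge\big(\min_{1\le k\le d}\min(1,D_k^2)\big)^d$. Taking $C_{\mu,d}:=\big(\min_{1\le k\le d}\min(1,D_k^2)\big)^d>0$ proves part (1).

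For part (2) it suffices to prove an absolute bound $D_k^2\ge 2^{-15k}$ for every isotropic log-concave $\mu$ on $\RR$ and every $k\ge 1$: indeed, if $k_1,\dots,k_m\ge1$ are the nonzero entries of a multi-index $I$ with $|I|=d$, then $\prod_i D_{I_i}^2=\prod_j D_{k_j}^2\ge\prod_j 2^{-15k_j}=2^{-15\sum_j k_j}=2^{-15d}$, so $C_{\mu,d}\ge 2^{-15d}$. To bound $D_k$ we use the standard fact that a one-dimensional isotropic log-concave density is bounded below by an absolute constant $c_0$ on some interval $J$ of absolute length $\ell_0$; then $\mu\ge c_0\,\mathrm{Leb}|_J$ and, by the distance identity above,
\[
D_k^2=\mathrm{dist}_{L^2(\mu)}\!\big(t^k,\mathcal{P}_{k-1}\big)^2\ \ge\ c_0\,\mathrm{dist}_{L^2(J)}\!\big(t^k,\mathcal{P}_{k-1}\big)^2 .
\]
Mapping $J$ affinely onto $[-1,1]$, the last quantity equals $c_0(\ell_0/2)^{2k+1}\norm{\hat L_k}_{L^2[-1,1]}^2$ with $\hat L_k$ the monic Legendre polynomial, and $\norm{\hat L_k}_{L^2[-1,1]}^2=\frac{2^{2k+1}(k!)^4}{(2k+1)((2k)!)^2}\ge\frac{2}{(2k+1)4^k}$. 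Inserting the explicit $c_0,\ell_0$ and absorbing the polynomial factor $(2k+1)^{-1}$ into the exponential gives $D_k^2\ge 2^{-15k}$, completing the proof.

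The conceptual heart — expanding in the orthogonal-polynomial basis, which kills the cross terms and leaves a diagonal form whose top block already controls $\mathrm{coeff}_d^2(f)$ — is short and is exactly what makes the estimate dimension-free. The real work sits in part (2): pinning down the one-dimensional log-concave density lower bound with explicit constants, and then carrying the Legendre bookkeeping carefully enough to land at the exponent $15d$ (there is some slack in the numerics). For part (1) the only nuisance is the moment technicality, dispatched by the truncation remark.
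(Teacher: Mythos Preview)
Your proof is correct and follows essentially the same route as the paper: expand in the tensor-product orthogonal polynomial basis, observe that the top-degree Fourier coefficients coincide with the monomial coefficients $\alpha_I$ (you get this cleanly from monicity; the paper uses orthonormal $p_k$ and the identity $\langle x^k,p_k\rangle=c_{\mu,k}$, which is the same quantity as your $D_k$), and for part~(2) lower-bound $D_k$ by comparing $\mu$ to Lebesgue on a fixed interval via the log-concave density lower bound and then invoking the explicit Legendre computation. The only places where you are sketchier than the paper are the truncation remark for infinite moments (the paper simply assumes finite moments throughout) and the final numerics landing on $2^{-15k}$, but both are routine.
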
 The requirement that $\mu$ has infinite support is
necessary here. Otherwise, one can always choose a polynomial $f$
of degree large enough, so that $f$ vanishes on the support of $\mu^{\otimes n}$.
In which case $\mathrm{Var}_{\mu^{\otimes n}}(f)=0$. Theorem \ref{thm:product} includes in it the standard Gaussian, which
was considered before in \cite[Lemma 5]{eldan2021non}. We recover
this result and actually improve upon the stated constant.

Most of the work on normal approximations for tensor powers revolved
around product measures (see \cite{bubeck2018entropic,fang2020new}).
In this case, Theorem \ref{thm:product} gives a complete dimension-free
picture. Still, the question is also interesting for measures that
do not have a product structure. Let us point out that our proof of
Theorem \ref{thm:product} goes through an orthogonal decomposition
of $L^{2}(\mu^{\otimes n})$, which relies on a particular form taken
by orthogonal polynomials of measures on the real line. Hence, it
is adapted to product measures, and we are not able to apply it in
the general case (however, see \cite{bakry2013orthogonal}, for some
examples of high-dimensional orthonormal polynomials, where our method
could prove useful).

To address the point raised above, we identify one class of non-product
measures where we can derive similar results, the uniform measures
on isotropic $L_{p}$ balls. For $p\geq1$ and $x=(x_{1},\dots,x_{n})\in\RR^{n}$
define its $p$-norm, by $\|x\|_{p}=\left(\sum\limits _{i=1}^{n}\left|x_{i}\right|^{p}\right)^{\frac{1}{p}}$
and define the unit ball of this norm, 
\[
B_{p,n}:=\{x\in\RR^{n}:\|x\|_{p}\leq1\}.
\]
An isotropic $L_{p}$ ball is a re-normalization $\tilde{B}_{p,n}=z_{p,n}B_{p,n}$,
such that the measure $\mathrm{Uniform}(\tilde{B}_{p,n})$ is isotropic.

The uniform measure on $\tilde{B}_{p,n}$ is reminiscent of a product measure.
Specifically, it is a well known fact that
if $Y$ is a random vector in $\RR^{n}$ with a product density, proportional
to $e^{-\|x\|_{p}^{p}}$, then if $U\sim\mathrm{Uniform}([0,1])$
is independent from $Y$, we have that $U^{\frac{1}{n}}\frac{Y}{\|Y\|_{p}}$
is uniformly distributed on $B_{p,n}$ (see \cite{schechtman90volume}).
Coupling this with the previous theorem we then obtain. \begin{theorem}\label{thm:pballs}
Let $p\geq1$, and let $\mu=\mathrm{Uniform}(\tilde{B}_{p,n})$. Fix
$d\in\NN$, then there exists a constant $C_{d}>0$, which depends
only on $d$, such that if $f:\RR^{n}\to\RR$ is a degree $d$ homogeneous
polynomial, 
\[
\EE_{\mu}[f^{2}]\geq C_{d}\mathrm{coeff}_{d}^{2}(f).
\]
\end{theorem}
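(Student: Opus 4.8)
The plan is to reduce the statement about the $L_p$ ball to Theorem~\ref{thm:product} via the well-known probabilistic representation of $\mathrm{Uniform}(B_{p,n})$ recalled in the excerpt. Concretely, let $\nu$ be the measure on $\RR$ with density proportional to $e^{-|t|^p}$, let $Y\sim\nu^{\otimes n}$, and let $U\sim\mathrm{Uniform}([0,1])$ be independent of $Y$; then $Z:=U^{1/n}\,Y/\|Y\|_p$ is uniform on $B_{p,n}$, and the isotropic ball is obtained by the scaling $\tilde B_{p,n}=z_{p,n}B_{p,n}$, so $X:=z_{p,n}Z$ has law $\mu$. Since $f$ is homogeneous of degree $d$, we have $f(X)=z_{p,n}^d\,U^{d/n}\,\|Y\|_p^{-d}\,f(Y)$, and therefore
\[
\EE_\mu[f^2]=z_{p,n}^{2d}\,\EE\!\left[U^{2d/n}\right]\,\EE\!\left[\frac{f(Y)^2}{\|Y\|_p^{2d}}\right],
\]
using independence of $U$ and $Y$. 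The factor $\EE[U^{2d/n}]=\frac{n}{n+2d}$ is bounded below by a constant depending only on $d$ (at worst $\frac13$ when $n=1$, and $\to1$ as $n\to\infty$), and $z_{p,n}^{2d}$ is an explicit dimensional normalization that one must track.

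The first key step is to control $\EE\big[f(Y)^2/\|Y\|_p^{2d}\big]$ from below. The natural move is to decouple the homogeneous numerator from the radial-type denominator: since $\|Y\|_p^{2d}$ concentrates around $n^{2d/p}\cdot\|t\|_{L^p(\nu)}^{2d}$ up to constants (all moments of $\|Y\|_p$ are comparable because $Y$ has independent, mean-zero, sub-exponential-type coordinates with a density $\propto e^{-|t|^p}$), one expects
\[
\EE\!\left[\frac{f(Y)^2}{\|Y\|_p^{2d}}\right]\gtrsim_d \frac{\EE[f(Y)^2]}{n^{2d/p}}.
\]
Making this rigorous is the main technical point: one should either (i) use a Paley--Zygmund / reverse-Hölder argument, showing $\|Y\|_p^{2d}\le C_{p,d}\,n^{2d/p}$ on an event of probability bounded away from $0$ and that $f(Y)^2$ carries a constant fraction of its mass there, or (ii) appeal to the Borell--type concentration of $\|Y\|_p$ for the log-concave product measure $\nu^{\otimes n}$ together with a truncation argument, splitting the expectation over $\{\|Y\|_p\le 2\,n^{1/p}\|t\|_{L^p(\nu)}\}$ and its complement and absorbing the tail using the hypercontractive moment bounds for the polynomial $f(Y)$ (degree-$d$ polynomials in independent log-concave variables have all $L^q$ norms comparable, so the tail contributes a negligible fraction of $\EE[f(Y)^2]$). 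I expect route (i) to be cleanest.

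Once the decoupling is in place, apply Theorem~\ref{thm:product} to the product measure $\nu^{\otimes n}$ (its one-dimensional marginal $\nu$ is centered with infinite support, so part~(1) applies with a constant $C_{\nu,d}$ depending only on $d$ since $\nu$ depends only on $p$ — and one can even reduce to the isotropic log-concave case to get part~(2)'s explicit constant), yielding $\EE[f(Y)^2]=\mathrm{Var}_{\nu^{\otimes n}}(f)\ge C_{\nu,d}\,\mathrm{coeff}_d^2(f)$, where we use that $f$ is homogeneous so $\EE_{\nu^{\otimes n}}[f]$ plays no role beyond the variance bound (more precisely, $\EE[f^2]\ge\mathrm{Var}(f)$). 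Combining the chain of inequalities,
\[
\EE_\mu[f^2]\;\ge\; z_{p,n}^{2d}\cdot\frac{n}{n+2d}\cdot\frac{c_d}{n^{2d/p}}\cdot C_{\nu,d}\,\mathrm{coeff}_d^2(f),
\]
and it remains to check that $z_{p,n}^{2d}\,n^{-2d/p}$ is bounded below by a constant depending only on $d$: indeed the isotropic scaling satisfies $z_{p,n}\asymp n^{1/p}$ (since the coordinates of $\mathrm{Uniform}(B_{p,n})$ have variance of order $n^{-2/p}$, up to $p$-dependent constants), so $z_{p,n}^{2d}n^{-2d/p}$ is a constant depending only on $p$, hence on $d$ once we note the bound is allowed to depend on $p$ — if one insists on a genuinely $p$-free constant, one checks the relevant ratio is uniformly bounded below over $p\ge1$ using the explicit formula $z_{p,n}^{2}=\Gamma(1+n/p)^{-2/n}\cdot(\text{const})$ and Stirling. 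This yields the claim with $C_d>0$ depending only on $d$; the main obstacle, as noted, is the rigorous decoupling of $f(Y)^2$ from $\|Y\|_p^{2d}$, which is where the log-concavity and hypercontractivity of $\nu^{\otimes n}$ are essential.
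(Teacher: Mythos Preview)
Your reduction via the Schechtman--Zinn representation, homogeneity, and the final scaling $z_{p,n}\asymp n^{1/p}$ is exactly the paper's framework (Lemma~\ref{lem:unnormalizedanti} and the proof of Theorem~\ref{thm:pballs}, including the embedded-cube argument for small $n$). The substantive difference is in how you decouple $f(Y)^2$ from $\|Y\|_p^{-2d}$. You propose to truncate on $\{\|Y\|_p\le T\}$ and control the lost mass of $f(Y)^2$ via reverse H\"older (Carbery--Wright hypercontractivity), then apply Theorem~\ref{thm:product} directly to $\EE[f(Y)^2]$. The paper does the \emph{dual} move: it truncates on $\{f(Y)^2>\delta\}$, controls $\EE[\|Y\|_p^{-2d}\mathbf{1}_{f^2\le\delta}]$ by Cauchy--Schwarz and the explicit negative moments $\EE[\|Y\|_p^{-2d}]\asymp\EE[\|Y\|_p^{-4d}]^{1/2}$ from Lemma~\ref{lem:marginalmoments}, and bounds $\PP(f(Y)^2\le\delta)$ via the sublevel estimate of Corollary~\ref{cor:smallball} (itself Carbery--Wright combined with Theorem~\ref{thm:product}); the optimal $\delta$ then comes out proportional to $\mathrm{coeff}_d^2(f)$, so $\EE[f(Y)^2]$ never appears as an intermediate quantity. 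Both routes are correct and use the same two ingredients (Theorem~\ref{thm:product} and Carbery--Wright) arranged symmetrically.

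One caution on your side: to keep the constants $p$-free you should avoid an actual concentration statement for $\|Y\|_p$ and instead use moments. Since $\EE\|Y\|_p^p=n/p$, the naive sub-exponential concentration of $\sum|Y_i|^p$ degrades like $e^{-cn/p}$ and is useless when $p$ is comparable to $n$. Using Markov with the explicit formula $\EE[\|Y\|_p^{2d}]=\Gamma((n+2d)/p)/\Gamma(n/p)$ (Lemma~\ref{lem:marginalmoments}) suffices for your route (ii) and yields $\EE[f(Y)^2/\|Y\|_p^{2d}]\ge c_d\,\EE[f(Y)^2]/\EE[\|Y\|_p^{2d}]$ with a genuinely $d$-only constant; the factor $z_{p,n}^{2d}/\EE[\|Y\|_p^{2d}]$ is then uniformly bounded below over $p\ge1$ by the same Gamma-function estimates.
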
 The constant $C_{d}$ in Theorem \ref{thm:pballs}
is explicit. Since it has a somewhat complicated expression,
we chose to present it this way. Whether the same conclusion holds
for general isotropic log-concave measures, maybe with suitable symmetries,
is an interesting question that is left open.

In contrast to Theorem \ref{thm:product}, Theorem \ref{thm:pballs}
is restricted to homogeneous polynomials and only deals with the second
moment, as opposed to the variance. As it turns out, this is a necessity,
as illustrated by the following example.

\begin{example}Suppose that $X_{n}\sim\mathrm{Uniform}(\tilde{B}_{p,n})$
for $p$ an even natural number, and consider the following polynomial
$f_{n}(x)=\frac{1}{\sqrt{n}}\left(\|x\|_{p}^{p}-\EE\left[\|X_{n}\|_{p}^{p}\right]\right)$
of degree $p$. Then $\mathrm{coeff}_{p}(f)=1$. However, an easy
calculation (see Section \ref{subsec:Further-discussion}) shows,
\begin{equation}
\mathrm{Var}\left(\frac{1}{\sqrt{n}}\|X_{n}\|_{p}^{p}\right)=\EE\left[f_{n}^{2}(X_{n})\right]\xrightarrow{n\to\infty}0.\label{eq:badexample}
\end{equation}
\end{example}

One may wonder whether polynomials satisfying \eqref{eq:badexample}
are abundant, or whether it is some pathological example. When $X\sim\mathrm{Uniform}(\tilde{B}_{2,n})$,
the following proposition shows that the latter holds, i.e. the polynomial
$\frac{1}{\sqrt{n}}\|x\|_{2}^{2}$ is essentially the only bad example.
We do this by providing a complete description of the eigenvalues
and eigenvectors of the matrix $\mathrm{Cov}\left(X^{\otimes d}\right)$
in terms of spherical harmonics. For a more complete picture we refer
to Section \ref{sec:spectrum}.

\begin{proposition}[see Corollary \ref{Cor:spectrum of C}]Let
$X_{n}\sim\mathrm{Uniform}(\tilde{B}_{2,n})$. Write $\lambda_{1}\leq\lambda_{2}\leq\dots$
for the eigenvalues of the matrix $\mathrm{Cov}\left(X^{\otimes d}\right)$,
in increasing order. Then the following hold: 
\begin{enumerate}
\item ``\emph{pathological spectral gap}'': If $d=2$, then 
\[
\lambda_{1}=\frac{4}{n+4}=O(n^{-1}),
\]
has multiplicity one, with eigenvector $\frac{1}{\sqrt{n}}\|x\|_{2}^{2}$,
and the rest of the eigenvalues are bounded from below by $\frac{5}{7}$. 
\item For $d\geq3$ we have a uniform lower bound on the eigenvalues 
\[
\lambda_{i}\geq\frac{1}{(d+1)!},
\]
for all $n$. If $n\geq d$, then the $\lambda_{1}$-eigenspace is
spanned by monomials of the form $x_{i_{1}}\dots x_{i_{d}}$ with
$i_{1}<\dots<i_{d}$. 
\end{enumerate}
\end{proposition}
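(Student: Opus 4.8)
The plan is to exploit the rotational symmetry of $X_n\sim\mathrm{Uniform}(\tilde B_{2,n})$ to decompose $\mathrm{Sym}_d(\RR^n)$ into $\mathrm{Cov}(X_n^{\otimes d})$-invariant subspaces, and then to compute the eigenvalue on each piece. The key observation is that $X_n = R\,\Theta$ where $\Theta$ is uniform on $\Sph$ and $R$ is an independent radial variable with an explicitly known density (a Beta-type law, up to the isotropic normalization $z_{2,n}$). For a homogeneous degree-$d$ polynomial $f$ one has $f(X_n) = R^d f(\Theta)$, so $\EE[f(X_n)^2] = \EE[R^{2d}]\,\EE_{\Theta}[f(\Theta)^2]$; the point is that $\EE[R^{2d}]$ is a scalar constant, so up to this scalar the quadratic form $v\mapsto \EE[\langle v, X_n^{\otimes d}\rangle^2]$ is just the Gram matrix of monomials in $L^2(\Sph)$. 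Hence diagonalizing $\mathrm{Cov}(X_n^{\otimes d})$ reduces to understanding the restriction-to-the-sphere inner product on $\mathrm{Sym}_d(\RR^n)$ and then subtracting off the contribution of the mean (which is nonzero only through even-degree pieces, in fact only through powers of $\|x\|_2^2$).

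The main step is the harmonic decomposition. Every homogeneous degree-$d$ polynomial decomposes as $f = \sum_{j} \|x\|_2^{2j} h_{d-2j}$ with $h_{d-2j}$ a harmonic polynomial of degree $d-2j$ (the classical Fischer/Fourier decomposition), and on the sphere $\|x\|_2^2 \equiv 1$, so $f|_{\Sph} = \sum_j h_{d-2j}|_{\Sph}$ is precisely the expansion into spherical harmonics, which are $L^2(\Sph)$-orthogonal. These subspaces $V_k := \{\|x\|_2^{2(d-k)/2} h_k : h_k \text{ harmonic of degree } k\}$ (for $k \equiv d \bmod 2$, $0 \le k \le d$) are invariant under $O(n)$, hence under $\mathrm{Cov}(X_n^{\otimes d})$, and on each of them the quadratic form $\EE[f(X_n)^2]$ is a constant multiple of the sphere norm; that constant is $\EE[R^{2d}]$ times the ratio $\|h_k\|_{L^2(\Sph)}^2 / \|\,\|x\|_2^{2(d-k)/2}h_k\|_{\mathrm{coeff}_d}^2$ — i.e. the eigenvalue depends on $k$ through a purely combinatorial norm-comparison factor, which one computes explicitly (this is where the constants $\tfrac{4}{n+4}$, $\tfrac1{(d+1)!}$, etc., come from). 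Then one passes from $\EE[f^2]$ to $\mathrm{Var}(f) = \EE[f^2] - \EE[f]^2$: since $\EE[X_n^{\otimes d}]$ is $O(n)$-invariant it lies in the line spanned by the symmetrization of $\|x\|_2^d$ (nonzero only when $d$ is even), so subtracting the mean only perturbs the single one-dimensional subspace $V_0$, producing the ``pathological'' eigenvalue $\lambda_1$ with eigenvector $\propto \|x\|_2^d$, while leaving all $V_k$ with $k\ge 1$ untouched. For $d=2$ this gives exactly two eigenspaces, $V_0$ (dimension one, eigenvalue $\tfrac{4}{n+4}$ after the mean subtraction and normalization) and $V_2$ (the traceless symmetric matrices, eigenvalue bounded below by $\tfrac57$); for $d\ge 3$ one checks each $\lambda_i \ge \tfrac1{(d+1)!}$ by the explicit formula, and identifies the smallest one with the harmonic component $V_d$ (no $\|x\|_2^2$ factor), whose squarefree monomials $x_{i_1}\cdots x_{i_d}$, $i_1<\dots<i_d$, are automatically harmonic and span the relevant subspace when $n \ge d$.

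I expect the main obstacle to be the bookkeeping in the explicit eigenvalue computation: one needs closed forms for $\EE[R^{2d}]$ (equivalently moments of $\|x\|_2^2$ under the Dirichlet-type radial law, together with the normalization $z_{p,n}$ for $p=2$), and for the ratio between the $L^2(\Sph)$-norm of a harmonic polynomial $h_k$ times a power of $\|x\|_2^2$ and its $\mathrm{coeff}_d$ (sum of squares of coefficients in the monomial basis) — this ratio is governed by multinomial coefficients and the dimension of the space of degree-$k$ harmonics, and care is needed because $\mathrm{coeff}_d$ is not an intrinsic norm. The qualitative structure (which subspaces are invariant, which one carries the pathology) is forced by symmetry and is the easy part; turning it into the precise bounds $\tfrac{4}{n+4}$, $\tfrac57$, $\tfrac1{(d+1)!}$ and verifying monotonicity of the eigenvalue-in-$k$ is the delicate computation, which I would organize by first doing $d=2$ by hand and then setting up the general recursion. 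The identification of the $\lambda_1$-eigenspace with squarefree monomials for $d \ge 3$, $n\ge d$ then follows since such a monomial has $\Delta(x_{i_1}\cdots x_{i_d}) = 0$, so it lies in $V_d$, and these span $V_d$ modulo lower harmonic pieces once $n$ is large enough relative to $d$.
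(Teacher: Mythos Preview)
Your structural intuition is right---the spherical-harmonic decomposition is the key---but there is a genuine gap in the step ``These subspaces $V_k$ \ldots\ are invariant under $O(n)$, hence under $\mathrm{Cov}(X_n^{\otimes d})$.'' The operator $C=\mathrm{Cov}(X_n^{\otimes d})$ is the matrix of the $O(n)$-invariant bilinear form $(f,g)\mapsto\mathrm{Cov}(f(X),g(X))$ with respect to the \emph{coefficient} inner product $\langle\sum a_I x^I,\sum b_I x^I\rangle=\sum a_I b_I$. But this inner product is \emph{not} $O(n)$-invariant (e.g.\ $\|x_1^2\|_{\mathrm{coeff}}^2=1$ while a $45^\circ$ rotation sends $x_1^2$ to $\tfrac12(x_1^2+2x_1x_2+x_2^2)$, of squared norm $\tfrac32$), so $C$ does \emph{not} commute with the $O(n)$-action, and the $V_k$ are not $C$-invariant. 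Concretely, the $V_k$ are not even mutually orthogonal in the coefficient inner product: for $d=4$, $n=2$ the harmonic polynomial $x_1^4-6x_1^2x_2^2+x_2^4\in V_4$ has coefficient inner product $-10$ with $(x_1^2+x_2^2)^2\in V_0$. Consequently the Rayleigh quotient $\mathrm{Var}(f(X))/\|f\|_{\mathrm{coeff}}^2$ is \emph{not} constant on $V_k$ (you flag that $\mathrm{coeff}_d$ is ``not an intrinsic norm'', but this is a structural obstruction, not just bookkeeping), and your eigenvalue computation cannot go through as stated.

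The paper's fix is exactly the missing idea: replace the coefficient inner product by the \emph{Bombieri} inner product $\langle f,g\rangle_B=\sum_I I!\,a_I b_I$, which \emph{is} $O(n)$-invariant and for which the harmonic decomposition \emph{is} orthogonal. The matrix $\widetilde C$ of the covariance form in this inner product then genuinely acts as a scalar $\eta_i$ on each $\|x\|^{2i}\mathcal H_{d-2i}(\RR^n)$, and one computes the $\eta_i$ explicitly. One returns to $C$ via $C=D^{-1}\widetilde C$ with $D=\mathrm{diag}(1/I!)$; in general this yields only the sandwich bound $\lambda_i(\widetilde C)\le\lambda_i(C)\le d!\,\lambda_i(\widetilde C)$, which already gives the $\tfrac1{(d+1)!}$ lower bound. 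The exact statements (the value $\tfrac4{n+4}$ for $d=2$, and the identification of the bottom eigenspace with square-free monomials for $d\ge3$) are then recovered by observing that on the specific vectors $\|x\|_2^2$ and $x_{i_1}\cdots x_{i_d}$ the diagonal matrix $D^{-1}$ acts as a scalar, so these particular $\widetilde C$-eigenvectors are also $C$-eigenvectors.
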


We remark that the lower bound in Item (2) can be further improved
(see Remark \eqref{rem:potential improvement}), and in fact $\underset{n\rightarrow\infty}{\lim}\lambda_{1}=1$
whenever $d\geq3$. 

\paragraph{Sub-level set estimates:}

Anti-concentration of polynomials with log-concave variables is a
well studied topic with many known results, most notably the work
of Carbery and Wright (\cite{carbery2001distributional}, but see
also \cite{nazarov2002geometric}), which also established reverse
H{\"o}lder inequalities. However, the results listed above are, in some
sense, of a different flavor.

In brief, (see Theorem \ref{thm:CW} below for exact formulation),
the Carbery-Wright inequality says that if $X$ is log-concave and
$f$ is a degree $d$ polynomial, then for every $\eps>0$, 
\[
\PP\left(\left|f(X)\right|\leq\eps\right)\lesssim\frac{\eps^{\frac{1}{d}}}{\EE\left[\left|f(X)\right|^{2}\right]^{\frac{1}{2d}}}.
\]
In other words, the inequality says something about sublevel sets
of the form $\{x\in\RR^{n}:\left|f(x)\right|\leq\eps\}$ under a moment
assumption.

In the same context, our result can roughly be stated as: if the coefficients
of $f$ are large, then $f(x)$ is not too concentrated around its
mean, in the sense that the variance is large.\\

While the results are not implied by nor imply one another, they turn
out to be complementary. By combining our results we then obtain the
following corollary, which is essentially a sublevel estimate, where
the moment assumption is replaced by an assumption on the coefficients.
\begin{corollary} \label{cor:smallball} Let $\mu$ be a log-concave
measure on $\RR^{n}$ with $X\sim\mu$ and let $f:\RR^{n}\to\RR$
be a polynomial of degree $d$. Fix $\eps>0$, 
\begin{enumerate}
\item If $\mu=\nu^{\otimes n}$ is an isotropic, log-concave product measure,
then there exists a universal constant $C>0$, such that for any $y\in\RR$,
\[
\PP\left(\left|f(X)-y\right|\leq\eps\right)\leq Cd\left(\frac{\eps}{\mathrm{coeff}_{d}(f)}\right)^{\frac{1}{d}}.
\]
\item If $\mu=\mathrm{Uniform}(\tilde{B}_{p,n})$, for some $p\geq1$ and
$f$ is homogeneous, 
\[
\PP\left(\left|f(X)\right|\leq\eps\right)\leq C_{d}\left(\frac{\eps}{\mathrm{coeff}_{d}(f)}\right)^{\frac{1}{d}},
\]
where $C_{d}>0$ is a constant which depends only on $d$. 
\end{enumerate}
\end{corollary}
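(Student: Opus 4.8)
The plan is to obtain both items by plugging the second-moment lower bounds of Theorems \ref{thm:product} and \ref{thm:pballs} into the Carbery--Wright inequality (Theorem \ref{thm:CW}), which for a degree $d$ polynomial $g$ and a log-concave random vector $X$ gives a bound of the shape $\PP(|g(X)|\le\eps)\le Cd\,\eps^{1/d}\big/\EE[g(X)^{2}]^{1/(2d)}$.

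For Item (1), fix $y\in\RR$ and apply Carbery--Wright to $g:=f-y$, which is still a polynomial of degree $d$. Two elementary observations drive the argument. First, subtracting the constant $y$ only modifies the degree-$0$ coefficient, so $\mathrm{coeff}_d(g)=\mathrm{coeff}_d(f)$. Second, for any random variable $Z$ and constant $y$ one has $\EE[(Z-y)^{2}]=\mathrm{Var}(Z)+(\EE Z-y)^{2}\ge\mathrm{Var}(Z)$, hence $\EE[g(X)^{2}]\ge\mathrm{Var}(f(X))$. Since $\nu$ is isotropic and log-concave its support is an interval, in particular infinite, so Theorem \ref{thm:product}(2) applies and yields $\mathrm{Var}(f(X))\ge 2^{-15d}\,\mathrm{coeff}_d^{2}(f)$. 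Combining,
\[
\PP\big(|f(X)-y|\le\eps\big)\le Cd\,\frac{\eps^{1/d}}{\big(2^{-15d}\,\mathrm{coeff}_d^{2}(f)\big)^{1/(2d)}}=Cd\,2^{15/2}\left(\frac{\eps}{\mathrm{coeff}_d(f)}\right)^{1/d},
\]
using $(2^{-15d})^{1/(2d)}=2^{-15/2}$ and $(\mathrm{coeff}_d^{2}(f))^{1/(2d)}=\mathrm{coeff}_d(f)^{1/d}$; absorbing $2^{15/2}$ into the universal constant gives the claim.

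For Item (2), the measure $\mu=\mathrm{Uniform}(\tilde{B}_{p,n})$ is uniform on a convex body, hence log-concave, so Carbery--Wright applies directly to the homogeneous polynomial $f$. By Theorem \ref{thm:pballs}, $\EE_{\mu}[f^{2}]\ge C_{d}\,\mathrm{coeff}_d^{2}(f)$, so that
\[
\PP\big(|f(X)|\le\eps\big)\le Cd\,\frac{\eps^{1/d}}{\big(C_{d}\,\mathrm{coeff}_d^{2}(f)\big)^{1/(2d)}}=Cd\,C_{d}^{-1/(2d)}\left(\frac{\eps}{\mathrm{coeff}_d(f)}\right)^{1/d},
\]
and renaming the $d$-dependent prefactor yields the stated bound.

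There is no substantial obstacle here: the corollary is essentially a repackaging of two independent inputs, and the work is bookkeeping — checking that $\mathrm{coeff}_d$ is unaffected by the additive shift in Item (1), that the relevant measures are log-concave so Carbery--Wright applies, and that raising the small constant $2^{-15d}$ (resp. $C_d$) to the power $1/(2d)$ costs only a universal constant (resp. a mild $d$-dependent constant), leaving the exponent on $\eps/\mathrm{coeff}_d(f)$ exactly equal to $1/d$.
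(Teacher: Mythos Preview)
Your proof is correct and follows essentially the same approach as the paper: in both items you combine the Carbery--Wright inequality with the second-moment lower bounds from Theorems~\ref{thm:product} and~\ref{thm:pballs}, and in Item~(1) you use the translation invariance of $\mathrm{coeff}_d$ and of the variance exactly as the paper does. The only differences are cosmetic (you bound $\EE[g(X)^2]\ge\mathrm{Var}(f(X))$ whereas the paper writes $\mathrm{Var}(f_y(X))$, which is the same quantity), and you add the small justification that $\nu$ has infinite support and that $\tilde B_{p,n}$ is log-concave.
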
 Note that in Item (2), we provide estimates only
for balls around $0$. Similarly as in the discussion after Theorem
\ref{thm:pballs}, by taking $f_{n}(x)=\frac{1}{\sqrt{n}}\|x\|_{p}^{p}$
and $X_{n}\sim\mathrm{Uniform}(\tilde{B}_{p,n})$ one can see there
is no hope for uniform estimates for sets of the form $\{\left|f(X)-y\right|\leq\eps\}$.

There are some other works which considered anti-concentration of
polynomials under an assumption on the coefficients (\cite{costello2006random,costello2016bilinear,razborov2013real,meka2016anti}),
mostly as part of the Littlewood-Offord theory, which first introduced
the problem for linear maps. However, previous results were constrained
to multi-linear polynomial with some combinatorial properties. 
Another related paper is \cite{emschwiller2020neural}, where dimension-dependent results were obtained in a similar setting to the one considered here.
We
also mention the work of Paouris (\cite{paouris2012small}, see also
\cite{klartag2007small}), which derived a similar result for the
push-forward of general log-concave measures under linear maps.

\subsection{\label{subsec:Decay-of-Fourier}Decay of Fourier coefficients}

Given a measure $\mu$ on $\RR^{n}$ and a polynomial $f:\RR^{n}\to\RR$,
anti-concentration results (as in Corollary \ref{cor:smallball})
can be rephrased by saying that the density of the pushforward measure
$f_{*}\mu$ does not explode too quickly around its singular values.
This explosion rate is controlled by the rate of decay of the Fourier
coefficients $t\mapsto\mathcal{F}(f_{*}\mu)(t)$ of $f_{*}\mu$. In
fact, using standard Fourier analysis, one can show that in order
to prove anti-concentration inequalities as in Corollary \ref{cor:smallball},
it is enough to give an upper bound of the form $\left|\mathcal{F}(f_{*}\mu)(t)\right|<C_{d}\cdot\left|t\right|^{-\frac{1}{d}}$
(for $\mathrm{coeff}_{d}(f)=1$).

In this work, we take the other direction and use our anti-concentration
results to obtain improved bounds on the decay of Fourier coefficients.
This reasoning is not new. Indeed, in the case $n=1$, one of the
earliest results, dating back to the 1920's, is the classical van
der Corput lemma connecting between derivatives of a function $f$
to the decay of its Fourier coefficients.

\begin{lemma}[{\cite[Proposition 2]{stein1993harmonic}}]\label{lem:1dvdc}Let
$f$ be a smooth function on $\RR$, and let $\mu=\rho(x)dx$ be a
measure on $\RR$. Fix $a<b$ and suppose that $k\in\NN$ is such
that $f^{(k)}\geq1$ for every $x\in(a,b)$. Then, if either of the
following conditions holds, 
\begin{itemize}
\item $k\geq2$, 
\item $k=1$, and $f'$ is monotonic, 
\end{itemize}
there exists a constant $C_{k}$, which depends only on $k$, such
that, 
\begin{equation}
\left|\mathcal{F}(f_{*}\mu|_{(a,b)})(t)\right|:=\left|\int\limits _{a}^{b}e^{\mathrm{i}tf(x)}d\mu(x)\right|\leq C_{k}\left|t\right|^{-\frac{1}{k}}\left(\rho(b)+\int\limits _{a}^{b}\left|\rho'(x)\right|dx\right).\label{eq:classic van der corput}
\end{equation}
\end{lemma}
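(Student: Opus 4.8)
The plan is to prove the classical van der Corput lemma (Lemma~\ref{lem:1dvdc}) by the standard two-part induction on $k$, adapting the textbook argument (as in Stein) so that it tracks the density $\rho$ and the boundary term. We treat the two hypotheses separately because the base cases differ.

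\paragraph{Base case $k=1$ with $f'$ monotonic.} Here I would write $\int_a^b e^{\mathrm{i}tf(x)}\rho(x)\,dx = \int_a^b \frac{1}{\mathrm{i}tf'(x)}\,\frac{d}{dx}\left(e^{\mathrm{i}tf(x)}\right)\rho(x)\,dx$ and integrate by parts, moving the derivative off the exponential. This produces a boundary term of size $\le \frac{1}{|t|}\left(\frac{\rho(b)}{|f'(b)|}+\frac{\rho(a)}{|f'(a)|}\right)$ together with an integral of $\frac{d}{dx}\!\left(\frac{\rho(x)}{f'(x)}\right)$ against $e^{\mathrm{i}tf(x)}$. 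Splitting that derivative via the product/quotient rule, the piece coming from $\rho'$ contributes $\frac{1}{|t|}\int_a^b \frac{|\rho'(x)|}{|f'(x)|}\,dx$, and the piece coming from $\frac{d}{dx}(1/f')$ is, crucially, of \emph{constant sign} because $f'$ is monotonic (so $1/f'$ is monotonic), letting us bound $\int_a^b \rho(x)\left|\frac{d}{dx}\frac{1}{f'(x)}\right|dx \le \max_{[a,b]}\rho \cdot \left|\frac{1}{f'(b)}-\frac{1}{f'(a)}\right|$. Using $f'\ge 1$ everywhere, all the $1/|f'|$ weights are $\le 1$, and after collecting terms one obtains the claimed bound with $k=1$ and an absolute constant $C_1$. (The monotonicity of $f'$ is exactly what prevents the total-variation of $1/f'$ from blowing up, which is why this hypothesis cannot be dropped when $k=1$.)

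\paragraph{Inductive step / the $k\ge 2$ case.} Assume the statement holds for $k-1$ (with the convention that for the intermediate applications we only ever invoke it in the ``$k-1\ge 2$'' regime or the base case). Since $f^{(k)}\ge 1$ on $(a,b)$, the derivative $f^{(k-1)}$ is strictly increasing, so it vanishes at most once; let $c\in[a,b]$ be that point (or an endpoint if there is no zero). Away from a $\lambda$-neighbourhood of $c$, say on $(a,c-\lambda)$ and $(c+\lambda,b)$, we have $|f^{(k-1)}|\ge \lambda$, so rescaling by $\lambda$ and applying the inductive hypothesis to $f/\lambda$ on each such subinterval gives a bound of order $\lambda^{-1}(|t|\lambda)^{-1/(k-1)}\cdot(\text{density terms})$; on the excluded interval of length $\le 2\lambda$ we use the trivial bound $\le \int_{c-\lambda}^{c+\lambda}\rho(x)\,dx \le 2\lambda\,\max\rho$. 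Optimizing the split by choosing $\lambda \sim |t|^{-1/k}$ balances the two contributions and yields $|t|^{-1/k}$. The one bookkeeping point is to make sure the density factors $\rho(b)+\int|\rho'|$ survive the subdivision cleanly: on each subinterval $(\alpha,\beta)$ the inductive bound involves $\rho(\beta)+\int_\alpha^\beta|\rho'|$, and since the subintervals are nested inside $(a,b)$ with the right-hand endpoints $\le b$, one can crudely dominate each such quantity by $\rho(b)+\int_a^b|\rho'(x)|\,dx$ plus, where needed, pointwise values of $\rho$ which are themselves controlled by $\rho(b)+\int_a^b|\rho'|$ via the fundamental theorem of calculus (so $\max_{[a,b]}\rho \le \rho(b)+\int_a^b|\rho'|$). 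This last observation also handles the $\max\rho$ term arising from the short central interval.

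\paragraph{Main obstacle.} The genuine subtlety is not the oscillatory estimate itself — that is the classical argument — but keeping the \emph{density-dependent} right-hand side in the exact form $\rho(b)+\int_a^b|\rho'(x)|\,dx$ through the integration by parts (base case) and through the interval subdivision (inductive step), rather than ending up with a weaker quantity like $\|\rho\|_\infty|b-a|$. The key enabling fact is the elementary bound $\sup_{[a,b]}\rho \le \rho(b)+\int_a^b|\rho'(x)|\,dx$, which lets every stray supremum or pointwise value of $\rho$ be absorbed into the stated expression; with this in hand the constants depend only on $k$, as claimed.
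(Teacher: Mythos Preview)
The paper does not supply its own proof of this lemma; it is quoted verbatim from Stein (cited as \cite[Proposition 2]{stein1993harmonic}) and used as a black box. So there is no ``paper's proof'' to compare against --- your argument is precisely the classical integration-by-parts and induction scheme from Stein, adapted to carry the density weight, and is correct in outline.

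One small slip: in the inductive step you write that the inductive hypothesis yields a bound ``of order $\lambda^{-1}(|t|\lambda)^{-1/(k-1)}$''. The extra $\lambda^{-1}$ should not be there --- rescaling gives $C_{k-1}(|t|\lambda)^{-1/(k-1)}$ times the density factor. Balancing \emph{that} against the trivial contribution $2\lambda\max\rho$ is exactly what forces $\lambda\sim|t|^{-1/k}$ (which you state correctly), whereas with the spurious $\lambda^{-1}$ the optimization would give the wrong exponent. Since your chosen $\lambda$ and final bound are right, this appears to be a typo rather than a conceptual error.
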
 In \cite{CCW99}, a multivariate analogue of the van
der Corput lemma was obtained. In particular, given a degree $d$ polynomial $f:\RR^{n}\rightarrow\RR$, and $\mu=\mathrm{Uniform}([-1,1]^{n})$, if $\partial^{I}f|_{[-1,1]^{n}}\geq1$, where $\partial^{I}=\partial x_{1}^{I_{1}}...\partial x_{n}^{I_{n}}$,
for some $|I|=d$, then,
\begin{equation}
\left|\mathcal{F}(f_{*}\mu)(t)\right|<C_{d,n}\left|t\right|^{-\frac{1}{d}},\label{eq:van der Corput cube}
\end{equation}
where $C_{d,n}$ depends on $n$ and $d$ (see \cite[Theorem 7.2]{CCW99}).

Other than that, there have been many works on generalizing the van
der Corput lemma to higher dimensions, and by now there are plenty
of results for different classes of functions and domains (see for
example \cite{ruzhansky2012multidimensional,carbery2002what,gressman2016maximal,christ2005multilinear}).
However, to the best of our knowledge, none of these results include
dimension-free estimates. In \cite{carbery2001distributional}, Carbery
and Wright asked whether the constant in \eqref{eq:van der Corput cube}
can be replaced by a dimension-free constant, while only assuming
$\left\Vert f\right\Vert _{1}\geq C_{d}$ and $\int_{[-1,1]^{n}}f=0$.
Since our Theorem \ref{thm:product} is inherently dimension-free
we are able to prove the first dimension-free bound, which applies
to a large class of measures. In particular, when specializing to the cube, this gives a partial
answer to their question.

\begin{theorem}\label{thm:VDC} Let $\nu^{\otimes n}$ be an isotropic
log-concave product measure on $\RR^{n}$ and let $f:\RR^{n}\to\RR$,
be a polynomial of degree $d$, $f(x)=\sum\limits _{\left|I\right|\leq d}\alpha_{I}x^{I}$.
Denote $M_{d}(f)=\max\{\left|\alpha_{I}\right|:\left|I\right|=d\}$.
Then, for every $t \in \RR$,
\[
\left|\int\limits _{\RR^{n}}e^{\mathrm{i}tf(x)}d\nu^{\otimes n}(x)\right|\leq \frac{Cd}{(M_{d}(f)\left|t\right|)^{\frac{1}{d}}}.
\]
for some universal constant $C>0$. \end{theorem}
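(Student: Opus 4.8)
The plan is to deduce Theorem~\ref{thm:VDC} from the variance lower bound in Theorem~\ref{thm:product} together with a standard Fourier-analytic argument that converts anti-concentration (equivalently, a lower bound on the variance of $f(X)$ at every scale) into decay of the characteristic function. The starting observation is that the quantity $M_d(f)$ controls $\mathrm{coeff}_d(f)$ from below in a trivial, dimension-free way: since $\mathrm{coeff}_d(f)=\sqrt{\sum_{|I|=d}\alpha_I^2}\ge \max_{|I|=d}|\alpha_I| = M_d(f)$, any bound phrased in terms of $\mathrm{coeff}_d(f)$ immediately implies the weaker bound phrased in terms of $M_d(f)$. So it suffices to prove
\[
\left|\int_{\RR^n} e^{\mathrm{i}t f(x)}\, d\nu^{\otimes n}(x)\right| \le \frac{Cd}{(\mathrm{coeff}_d(f)\,|t|)^{1/d}}.
\]
By homogeneity of the statement under rescaling $f \mapsto \lambda f$ (which sends $t \mapsto t/\lambda$ and $\mathrm{coeff}_d(f)\mapsto \lambda\,\mathrm{coeff}_d(f)$), we may normalize $\mathrm{coeff}_d(f)=1$ and must show $|\widehat{f_*\nu^{\otimes n}}(t)| \le Cd\,|t|^{-1/d}$.

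The key analytic step is a Carbery--Wright-type passage from a variance bound to a characteristic function bound. Here I would use the route through sublevel sets that is already packaged in Corollary~\ref{cor:smallball}(1): for an isotropic log-concave product measure and $\mathrm{coeff}_d(f)=1$, one has $\PP(|f(X)-y|\le \eps) \le Cd\,\eps^{1/d}$ for every $y\in\RR$. This says the pushforward density $f_*\nu^{\otimes n}$ has no atoms and, more quantitatively, that its ``concentration function'' decays like $\eps^{1/d}$ uniformly in the center $y$. A classical fact in Fourier analysis is that uniform control of the concentration function $Q(\eps):=\sup_y \PP(|Y-y|\le\eps)$ yields decay of the characteristic function: averaging $e^{\mathrm{i}tf(x)}$ against a smooth bump of width $\sim 1/|t|$ and using $|e^{\mathrm{i}u}-e^{\mathrm{i}u'}|\le|u-u'|$ shows $|\widehat{Y}(t)| \lesssim Q(c/|t|)$ up to a harmless error, hence $|\widehat{f_*\nu^{\otimes n}}(t)| \le Cd\,(1/|t|)^{1/d} = Cd\,|t|^{-1/d}$, as desired. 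Concretely, one writes, for a probability density $\psi$ supported in $[-1,1]$ with $\int\psi=1$ and $\psi_\delta(s) = \delta^{-1}\psi(s/\delta)$,
\[
\left|\widehat{f_*\mu}(t)\right|
\le \left|\EE\!\left[e^{\mathrm{i}t f(X)}\Big(1 - \widehat{\psi_\delta}(t)\Big)\right]\right|
+ \left|\EE\!\left[\int e^{\mathrm{i}t(f(X)-s)}\psi_\delta(s)\,ds\right]\right|,
\]
where the first term is bounded by $|t|\delta \cdot \sup|\widehat\psi|$-type quantities and the second is a smoothed density evaluated via the sublevel bound; choosing $\delta \asymp 1/|t|$ balances the two. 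Since Corollary~\ref{cor:smallball} is already stated in the excerpt, I can invoke it directly, so this step reduces to the bump-function computation.

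The main obstacle, and where care is needed, is not any single estimate but making the Fourier argument produce a constant of the exact shape $Cd$ rather than, say, $C d$ with a worse power of $d$ or an extra absolute factor — in other words, tracking constants through the smoothing argument so the final bound matches the stated $\frac{Cd}{(M_d(f)|t|)^{1/d}}$. One has to be a little careful that the reduction $M_d(f)\le \mathrm{coeff}_d(f)$ is the only place dimension-dependence could sneak back in (it does not), and that the sublevel estimate of Corollary~\ref{cor:smallball} is applied with the correct normalization. A secondary point is handling small $|t|$: for $|t|$ bounded away from, say, $1$ the right-hand side $Cd\,(M_d(f)|t|)^{-1/d}$ may exceed $1$, in which case the bound is trivial because the left side is always at most $1$; so one only needs the argument for $|t|$ large, which is exactly the regime where $\delta \asymp 1/|t|$ is a legitimate (small) smoothing scale. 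Assembling these pieces gives the theorem.
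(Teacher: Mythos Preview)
There is a genuine gap. Your central analytic step asserts that a uniform sublevel (concentration-function) bound $Q(\eps)=\sup_y\PP(|Y-y|\le\eps)\le Cd\,\eps^{1/d}$ implies Fourier decay $|\EE e^{\mathrm{i}tY}|\lesssim Q(1/|t|)$. This implication is false in general, and your smoothing decomposition is in fact tautological: since $\EE\!\left[\int e^{\mathrm{i}t(f(X)-s)}\psi_\delta(s)\,ds\right]=\widehat{\psi_\delta}(-t)\,\widehat{f_*\mu}(t)$, both terms you write down are scalar multiples of $\widehat{f_*\mu}(t)$, and neither can be bounded using only sublevel information about $f_*\mu$. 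A concrete counterexample to the implication: take the density $\rho_M(y)=1+\cos(2\pi M y)$ on $[0,1]$, $M\in\NN$ large. Then $Q(\eps)\le 4\eps$ for every $\eps>0$, uniformly in $M$, yet $\widehat{\rho_M}(2\pi M)=\tfrac12$. So small-ball control alone cannot yield pointwise Fourier decay; some genuine oscillation input is required.

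The paper supplies that oscillation via a one-dimensional van der Corput lemma. It picks a multi-index $I$ with $|\alpha_I|=M_d(f)$, singles out a coordinate $x_n$ with $I_n\ge 1$, and splits according to whether $|\partial_{x_n}^{I_n}f|$ is $\ge\eps$ or $<\eps$. On the ``good'' set, Fubini reduces the oscillatory integral to a one-variable one with a large $I_n$-th derivative, and Lemma~\ref{lem:logconcavevdc} (a van der Corput estimate for isotropic log-concave measures on $\RR$) gives decay $\lesssim (|t|\eps)^{-1/I_n}$. The ``bad'' set is controlled by applying the sublevel bound of Corollary~\ref{cor:smallball} not to $f$ but to the \emph{derivative} $\partial_{x_n}^{I_n}f$, a polynomial of degree $d-I_n$ with $\mathrm{coeff}_{d-I_n}\ge I_n!\,M_d(f)$; optimizing $\eps$ yields the stated bound. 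Note that this mechanism intrinsically uses a single large coefficient and a coordinate direction, which is exactly why $M_d(f)$ rather than $\mathrm{coeff}_d(f)$ appears. Your route would, if it worked, prove the stronger statement with $\mathrm{coeff}_d(f)$; the paper explicitly says after Theorem~\ref{thm:VDC} that it does not know whether that strengthening holds, which is another sign the shortcut cannot be right.
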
 Theorem \ref{thm:VDC}
gives a positive answer to the question posed in \cite{carbery2001distributional},
under the assumption that $M_{d}(f)\geq C'_{d}$. This is a stronger
requirement than the one in Theorem \ref{thm:product} which only
requires that $\mathrm{coeff}_{d}(f)\geq C'_{d}$, and both are stronger
than the condition than $\left\Vert f\right\Vert _{1}\geq C_{d}$
(by Theorem \ref{thm:product}). We do not know whether this is actually
necessary but point out that a recent analogous result for the Gaussian
measures obtained precisely the same dependence on the quantity $M_{d}(f)$
(see \cite[Corrolary 4.1]{kosov2020distributions}).

\subsection{Further discussion and future directions}

In this paper we study the pushforward $f_{*}\mu$ of a well-behaved
measure $\mu$ under polynomial maps $f:\RR^{n}\rightarrow\RR$ of
bounded degree. We focus on the regime where $n$ is arbitrarily large,
motivated by questions in high-dimensional geometry. There are a few
interesting variants which are worth mentioning. For simplicity of
presentation, we assume $f$ is a homogeneous polynomial of degree
$d$, but the discussion below easily generalizes to all polynomials.

First of all, one can also consider regimes of bounded complexity
(i.e. $n,d$ and $\mu$ are fixed), and try to obtain more refined
estimates than the ones afforded in the asymptotic realm. When $f$
and $\mu$ are fixed, it is known (see \cite{Igu78}, as well as \cite[Parts II,III]{AGV88}
and the references within) that the explosion rate of $\frac{df_{*}\mu}{dx}$
and the decay of Fourier coefficients of $f_{*}\mu$ are both controlled
by the singularities of $f$. The behavior of these singularities
can be quantified by the so-called \emph{log-canonical threshold}
of $f$, or $\mathrm{lct}(f)$, so that bad singularities correspond
to low values of the $\mathrm{lct}$ (see \cite{Mus12,Kol} for a
definition and an overview on the log-canonical threshold). In this
case, when $\varepsilon\ll1$, the term $\varepsilon^{\frac{1}{d}}$ in
Corollary \ref{cor:smallball} may actually be replaced by $\varepsilon^{\mathrm{lct}(f)}$.
Moreover, one always has $\mathrm{lct}(f)\geq1/d$. This suggests
that, while being tight, the Carbery-Wright inequality is somewhat
pessimistic, and could be improved for specific polynomial mappings.

With this in hand, it is still a non-trivial task to obtain effective
sublevel and Fourier estimates in terms of the $\mathrm{lct}$, which
are uniform on reasonable complexity classes of $\mu$, and with $\mathrm{deg}(f)$
bounded. One can further consider the case of polynomial maps $f:\RR^{n}\rightarrow\RR^{m}$,
for $m>1$. Here, $\mathrm{lct}(f)$ still controls the explosion
rate of $f_{*}\mu$ but does not control $\mathcal{F}(f_{*}\mu)$
anymore. Concrete uniform bounds will be of interest.

Secondly, instead of working over $\RR$, one can work with any local
field $F$. For $p$-adic fields, the study of $f_{*}\mu$, for suitable
$\mu$, is of arithmetic nature; for example, one can take the collection
of normalized Haar measures $\mu_{\mathbb{Z}_{p}^{n}}$ on $\mathbb{Z}_{p}^{n}$
(the ring of $p$-adic integers), which can be thought of as a $p$-adic
analogue of $\tilde{B}_{2,n}$ or the normalized Gaussian. For simplicity,
assume that $f$ has coefficients in $\ZZ$. Then, for each $k\in\NN$,
we have 
\begin{equation}
\PP\left(\left|f(X)\right|_{p}\leq p^{-k}\right)=\frac{\#\left\{ a\in\left(\ZZ/p^{k}\ZZ\right)^{n}:f(a)=0\mod p^{k}\right\} }{p^{kn}}
\end{equation}
(here $|\cdot|_{p}$ stands for the $p$-adic absolute value). Thus,
sublevel set estimates translate in the $p$-adic world into estimates
on the number of solutions of congruences of $f$ modulo $p^{k}$.
This is a fundamental question in number theory which is strongly
related to Igusa's local Zeta function (see e.g. \cite{Igu74,Den91a,DL98,dSG00}).
For a fixed $f$, sharp sublevel set estimates can be given (see \cite{Igu74},
and the discussion after \cite[Corollary 2.9]{VZG08}); there exists
a constant $C_{f,p}>0$ such that for all $k\in\NN$, 
\[
\PP\left(\left|f(X)\right|_{p}\leq p^{-k}\right)<C_{f,p}\cdot k^{n-1}p^{-k\mathrm{lct}(f)}.
\]

In \cite[Corollary 2.9]{VZG08} and \cite[Theorem 8.18]{GHb}, sublevel
set estimates were given for polynomial maps $f:\mathbb{Q}_{p}^{n}\rightarrow\mathbb{Q}_{p}^{m}$
for $m\geq1$, and more recently, sharp and field independent estimates
were given in \cite[Theorem 4.12]{CGH}.

In a similar fashion, the study of $\mathcal{F}(f_{*}\mu)$, translates
in the $p$-adic world to the study of \emph{exponential sums,} which
goes back to Gauss. The Fourier coefficients are essentially of the
following form: 
\begin{equation}
\frac{1}{p^{kn}}\sum_{x\in(\ZZ/p^{k}\ZZ)^{n}}\mathrm{exp}\left(\frac{2\pi \mathrm{i}f(x)}{p^{k}}\right).\label{eq:exponential sums}
\end{equation}
Igusa showed \cite{Igu78} that (\ref{eq:exponential sums}) can be
bounded from above by $C_{f,p}\cdot k^{n-1}\cdot p^{-k\mathrm{lct}(f)}$,
and further conjectured that $C_{f,p}$ can be replaced by $C_{f}$.
This was recently proved in \cite[Theorem 1.5]{CMN19}. Moreover,
$p$-adic analogues of the van der Corput lemma were given in \cite{Rog05,Clu11}.

It will be interesting to find sublevel and Fourier estimates in the
$p$-adic case, when the complexity is unbounded. This, along with
the variants presented above will be studied in a sequel to this paper.

\section{Orthogonal polynomials}

For the rest of this section we fix a centered measure $\mu$ on $\RR$,
such that for every $d\in\NN$, $\int\limits _{\RR}x^{d}d\mu(x)<\infty$
and such that $\mu$ is supported on infinitely many points. We associate
to $\mu$ a sequence of orthonormal polynomials $\{p_{d}\}_{d=0}^{\infty}$
satisfying, 
\[
\langle p_{d},p_{d'}\rangle_{L^{2}(\mu)}:=\int\limits _{\RR}p_{d}(x)p_{d'}(x)d\mu(x)=\delta_{d,d'}.
\]
Such a sequence may be obtained by applying the Gram-Schmidt algorithm
to the set $\{1,x,x^{2},\dots\}$, with respect to the standard inner
product on $L^{2}(\mu)$. Remark that, by definition, if $f\in L^{2}(\mu)$
then 
\begin{equation}
f=\sum\limits _{d=0}^{\infty}\langle f,p_{d}\rangle_{L^{2}(\mu)}p_{d},\label{eq:fourierdeco}
\end{equation}
where the equality is to be understood in $L^{2}(\mu)$, and where
\[
\langle f,p_{d}\rangle_{L^{2}(\mu)}=\int\limits _{\RR}f(x)p_{d}(x)d\mu(x).
\]
Observe that $p_{0}\equiv1$ and that since $\mu$ is centered, $p_{1}(x)\propto x$.
Moreover, it is easy to see that for every $d\in\NN$, $p_{d}$ is
a polynomial of degree $d$. The reader is referred to \cite{szego1975polynomials}
for more details pertaining to orthogonal polynomials. We will mostly
be interested in the following simple representation which is a consequence
of the Gram-Schmidt process.

Inside the Hilbert space $L^{2}(\mu)$, for $k\in\NN$, define $Q_{k}:L^{2}(\mu)\to L^{2}(\mu)$
as the orthogonal projection onto the closed subspace $\mathrm{span}\{1,x,x^{2},\dots,x^{k}\}$.
Then, 
\begin{equation}
p_{d}=\frac{1}{c_{\mu,d}}\left(x^{d}-Q_{d-1}x^{d}\right),\label{eq:orthoprojection}
\end{equation}
where the constant $c_{\mu,d}:=\left(\EE_{\mu}\left[\left(x^{d}-Q_{d-1}x^{d}\right)^{2}\right]\right)^{\frac{1}{2}}$
ensures that $\EE_{\mu}[p_{d}^{2}]=1$. Note that $c_{\mu,d}>0$. Indeed, since
$\mu$ is not supported on a finite number of points, $x^{d}\notin\mathrm{span}\{1,x,\dots,x^{d-1}\}.$

We now show that monomials have tractable expansions with respect
to the above orthogonal polynomials. \begin{lemma}\label{lem:monomials}Fix
$d\in\NN$, 
\begin{enumerate}
\item For any $k>d$, $\langle p_{k}(x),x^{d}\rangle_{L^{2}(\mu)}=0$. 
\item $\langle p_{d}(x),x^{d}\rangle_{L^{2}(\mu)}=c_{\mu,d},$ where $c_{\mu,d}$
is as defined by \eqref{eq:orthoprojection}. 
\end{enumerate}
\end{lemma}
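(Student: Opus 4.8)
The plan is to read off both items directly from the projection representation \eqref{eq:orthoprojection}, namely $c_{\mu,d}\,p_d = x^d - Q_{d-1}x^d$, combined with the single structural fact that the residual of an orthogonal projection is orthogonal to the subspace being projected onto: for every $y\in L^2(\mu)$ and every $k$, the element $y - Q_k y$ is orthogonal to $\mathrm{span}\{1,x,\dots,x^k\}$. All the inner products below make sense because $\mu$ has finite moments of every order, so every monomial lies in $L^2(\mu)$; and $c_{\mu,d}>0$ as already observed before the statement, since the support of $\mu$ is infinite and hence $x^d\notin\mathrm{span}\{1,\dots,x^{d-1}\}$.

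For Item (1), I would fix $k>d$ and apply \eqref{eq:orthoprojection} at level $k$, so that $p_k$ is a scalar multiple of $x^k - Q_{k-1}x^k$, which is orthogonal to $\mathrm{span}\{1,x,\dots,x^{k-1}\}$. Since $k-1\ge d$, the monomial $x^d$ lies in this span, and therefore $\langle p_k, x^d\rangle_{L^2(\mu)} = 0$.

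For Item (2), I would decompose $x^d = (x^d - Q_{d-1}x^d) + Q_{d-1}x^d = c_{\mu,d}\,p_d + Q_{d-1}x^d$ and pair with $p_d$. The first summand contributes $c_{\mu,d}\langle p_d, p_d\rangle_{L^2(\mu)} = c_{\mu,d}$, while the second contributes $0$ because $Q_{d-1}x^d\in\mathrm{span}\{1,\dots,x^{d-1}\}$, which is orthogonal to $p_d$ by the same residual property. Equivalently, one can write $\langle p_d, x^d\rangle_{L^2(\mu)} = \tfrac{1}{c_{\mu,d}}\langle x^d - Q_{d-1}x^d,\, x^d\rangle_{L^2(\mu)}$, replace the second argument $x^d$ by $x^d - Q_{d-1}x^d$ at no cost (the difference $Q_{d-1}x^d$ is killed by the orthogonality), and recognize $\tfrac{1}{c_{\mu,d}}\|x^d - Q_{d-1}x^d\|_{L^2(\mu)}^2 = c_{\mu,d}$ from the definition of $c_{\mu,d}$.

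There is essentially no obstacle here: the lemma is a bookkeeping consequence of the Gram--Schmidt description. The only things to keep in mind are not to invoke any finer structure of the $p_d$ (three-term recurrences, explicit formulas), since the argument uses only \eqref{eq:orthoprojection}, and that identities such as $x^d = c_{\mu,d}p_d + Q_{d-1}x^d$ are genuine equalities in $L^2(\mu)$ — which is unproblematic, as $Q_{d-1}x^d$ is the projection onto the finite-dimensional space $\mathrm{span}\{1,\dots,x^{d-1}\}$ and is thus itself an honest polynomial.
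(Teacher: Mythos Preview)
Your proof is correct and matches the paper's approach essentially line for line: Item (1) is the Gram--Schmidt orthogonality of the residual $x^k - Q_{k-1}x^k$ to lower-degree monomials, and your ``equivalently'' computation for Item (2), namely $\langle p_d, x^d\rangle = \tfrac{1}{c_{\mu,d}}\langle x^d - Q_{d-1}x^d, x^d\rangle = \tfrac{1}{c_{\mu,d}}\|x^d - Q_{d-1}x^d\|^2 = c_{\mu,d}$, is exactly what the paper does. The only difference is that the paper writes out the intermediate step $\EE_\mu[x^{2d}] - \EE_\mu[(Q_{d-1}x^d)^2]$ explicitly, while you phrase the same orthogonality more directly.
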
 \begin{proof} Item (1) is a direct consequence of the
Gram-Schmidt process. For Item (2), note that since $Q_{d-1}$ is
an orthogonal projection, we have: 
\begin{align*}
c_{\mu,d} & =\frac{1}{c_{\mu,d}}\EE_{\mu}\left[\left(x^{d}-Q_{d-1}x^{d}\right)^{2}\right]=\frac{1}{c_{\mu,d}}\left(\EE_{\mu}\left[x^{2d}\right]-\EE_{\mu}\left[\left(Q_{d-1}x^{d}\right)^{2}\right]\right)\\
 & =\frac{1}{c_{\mu,d}}\langle x^{d}-Q_{d-1}x^{d},x^{d}\rangle_{L^{2}(\mu)}=\langle p_{d}(x),x^{d}\rangle_{L^{2}(\mu)},
\end{align*}
which concludes the proof. \end{proof} 
We next bound the constant $c_{\mu,d}$ from below. We start with the case of the cube and then use it to prove a general bound for isotropic log-concave measures.
 \begin{lemma} \label{lem:interval} Suppose that
$\mu=\mathrm{Uniform}([-1,1])$. Then, 
\[
c_{\mu,d}=\langle x^{d},p_{d}(x)\rangle_{L^{2}(\mu)}\geq\frac{1}{2^{d}}.
\]
\end{lemma}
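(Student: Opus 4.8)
The plan is to compute $c_{\mu,d}$ for $\mu = \mathrm{Uniform}([-1,1])$ via the known explicit formula for Legendre polynomials, since the orthonormal polynomials for the uniform measure on $[-1,1]$ are exactly the normalized Legendre polynomials. Concretely, I would recall that the classical (un-normalized) Legendre polynomial $P_d$ satisfies $\int_{-1}^{1} P_d(x)^2\,dx = \frac{2}{2d+1}$, and that the leading coefficient of $P_d$ is $\frac{1}{2^d}\binom{2d}{d}$. Writing $p_d = \sqrt{\tfrac{2d+1}{2}}\,P_d$ for the $L^2(\mu)$-normalized version (note $d\mu = \tfrac12 dx$), I would then use Lemma \ref{lem:monomials}(2): $c_{\mu,d} = \langle x^d, p_d\rangle_{L^2(\mu)}$. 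Since $p_d$ is a degree-$d$ polynomial, only its leading term contributes to this pairing against $x^d$ modulo lower-degree terms — more precisely, $\langle x^d, p_d \rangle = (\text{leading coeff of } p_d) \cdot \langle x^d, x^d\rangle + (\text{pairing with lower degree terms})$, but the cleanest route is the identity from \eqref{eq:orthoprojection}: $c_{\mu,d}^2 = \EE_\mu[(x^d - Q_{d-1}x^d)^2] = \langle x^d - Q_{d-1}x^d, x^d\rangle$, and $x^d - Q_{d-1}x^d = c_{\mu,d} p_d$, so $c_{\mu,d} = \langle p_d, x^d\rangle$, which by orthogonality to lower-degree monomials equals $(\text{leading coeff of } x^d$ in terms of $p_d) $.

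Alternatively, and perhaps more self-containedly, I would avoid invoking Legendre asymptotics and instead argue directly. Note $c_{\mu,d} = \mathrm{dist}_{L^2(\mu)}(x^d, \mathrm{span}\{1,x,\dots,x^{d-1}\})$, the distance from $x^d$ to the space of lower-degree polynomials. Equivalently, $c_{\mu,d} = \inf_{q} \|x^d - q\|_{L^2(\mu)}$ over polynomials $q$ of degree $< d$, i.e. $c_{\mu,d} = \inf\{ \|r\|_{L^2(\mu)} : r \text{ monic of degree } d\}$ — the $L^2(\mu)$ norm of the monic orthogonal polynomial $\tilde p_d$. So I need a lower bound $\|\tilde p_d\|_{L^2(\mathrm{Unif}[-1,1])} \geq 2^{-d}$. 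Here I would use the extremal characterization together with a known lower bound on monic polynomials: for any monic degree-$d$ polynomial $r$, one has $\sup_{[-1,1]} |r| \geq 2^{1-d}$ (the Chebyshev extremal property), but I actually want an $L^2$ lower bound. A clean way: relate $c_{\mu,d}$ to the leading coefficient. If $p_d(x) = k_d x^d + \dots$ is the orthonormal polynomial, then $\tilde p_d = p_d / k_d$ is monic, $\|\tilde p_d\|_{L^2(\mu)} = 1/k_d = c_{\mu,d}$. For Legendre, $k_d = \sqrt{\tfrac{2d+1}{2}} \cdot \tfrac{1}{2^d}\binom{2d}{d}$, so $c_{\mu,d} = \sqrt{\tfrac{2}{2d+1}} \cdot \tfrac{2^d}{\binom{2d}{d}}$. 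Using $\binom{2d}{d} \leq 4^d$ gives $c_{\mu,d} \geq \sqrt{\tfrac{2}{2d+1}} \cdot 2^{-d}$, which is slightly weaker than the claimed $2^{-d}$ by the factor $\sqrt{2/(2d+1)}$, so I would need a sharper estimate: $\binom{2d}{d} \leq \frac{4^d}{\sqrt{\pi d}}$ (or $\leq 4^d/\sqrt{2d+1}$ after checking small cases), which then yields $c_{\mu,d} \geq \sqrt{\tfrac{2}{2d+1}}\cdot \tfrac{\sqrt{2d+1}}{\sqrt{2}} \cdot 2^{-d} = 2^{-d}$ exactly, possibly with small-$d$ cases checked by hand.

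The main obstacle I anticipate is getting the constant to be exactly $2^{-d}$ rather than $2^{-d}$ up to a polynomially-decaying factor: the naive bound $\binom{2d}{d}\le 4^d$ loses a $\sqrt{d}$ factor, so I need the refined central binomial coefficient bound $\binom{2d}{d} \le 4^d/\sqrt{\pi d}$ (valid for $d\ge 1$) combined with the precise Legendre $L^2$ normalization constant $\tfrac{2}{2d+1}$, and then verify the inequality $\sqrt{\tfrac{2}{2d+1}}\cdot\tfrac{\sqrt{\pi d}}{1} \ge 1$ holds for all $d \ge 1$ (it does, since $\pi d \ge \tfrac{2d+1}{2}$ iff $2\pi d \ge 2d+1$ iff $d(2\pi - 2) \ge 1$, true for $d\ge 1$). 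The other route — a direct variational lower bound on $\|\tilde p_d\|_{L^2}$ without citing Legendre formulas — would require reproving enough of the Legendre theory, so I expect the cleanest writeup just cites the standard Legendre leading-coefficient and norm formulas (e.g. from \cite{szego1975polynomials}) and does the elementary binomial estimate. A sanity check at $d=1$: $c_{\mu,1} = \|x\|_{L^2(\mathrm{Unif}[-1,1])} = \sqrt{1/3} \approx 0.577 \geq 1/2$, consistent; at $d=2$, $\tilde p_2 = x^2 - 1/3$, $\|\tilde p_2\|^2 = \int_{-1}^1 (x^2-\tfrac13)^2 \tfrac{dx}{2} = \tfrac{4}{45}$, so $c_{\mu,2} = 2/\sqrt{45} \approx 0.298 \geq 1/4$, also consistent.
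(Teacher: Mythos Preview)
Your approach is essentially the same as the paper's: both compute $c_{\mu,d}$ explicitly via the classical Legendre polynomial formulas and then verify the elementary inequality $\binom{2d}{d}\le 4^d/\sqrt{2d+1}$ (the paper states the resulting closed form $c_{\mu,d}=\frac{1}{\sqrt{2d+1}}\frac{2^d(d!)^2}{(2d)!}$ after a $d$-fold integration by parts with the Rodrigues formula, which is the same number you reach via the leading-coefficient route $c_{\mu,d}=1/k_d$).

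One slip to fix: since $d\mu=\tfrac12\,dx$ and $\int_{-1}^1 P_d^2\,dx=\tfrac{2}{2d+1}$, the $L^2(\mu)$-orthonormal version is $p_d=\sqrt{2d+1}\,P_d$, not $\sqrt{(2d+1)/2}\,P_d$; consequently $c_{\mu,d}=\frac{1}{\sqrt{2d+1}}\cdot\frac{2^d}{\binom{2d}{d}}$, matching your own sanity checks at $d=1,2$. The final step then requires $\pi d\ge 2d+1$ rather than $2\pi d\ge 2d+1$, which still holds for all $d\ge1$, so the argument goes through unchanged.
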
 \begin{proof} In this case, the sequence $p_{d}$ is
given by the Legendre polynomials, and we have the following representation
(see \cite[Chapter 4]{szego1975polynomials}): 
\[
p_{d}(x)=\frac{\sqrt{2d+1}}{2^{d}d!}\frac{\partial^{d}}{\partial x^{d}}(x^{2}-1)^{d}.
\]
A direct calculation involving a $d$-fold integration by parts (see
e.g. \cite[Section 15]{AW01}) gives, 
\[
\langle x^{d},p_{d}(x)\rangle_{L^{2}(\mu)}=\frac{1}{\sqrt{2d+1}}\frac{2^{d}(d!)^{2}}{(2d)!}\geq\frac{1}{2^{d}}. 
\]
\end{proof}

\begin{lemma} \label{lem:1dlogconcave} Let $\mu$ be a log-concave
	and isotropic measure on $\RR$. Then $c_{\mu,d}\geq\frac{1}{9}\frac{1}{18^{d}}$.
\end{lemma}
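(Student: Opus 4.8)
The plan is to reduce the general isotropic log-concave case to the uniform measure on an interval, for which Lemma \ref{lem:interval} already supplies the bound $c_{\mu,d}\geq 2^{-d}$. Recall that $c_{\mu,d}^2=\EE_\mu[(x^d-Q_{d-1}x^d)^2]=\mathrm{dist}_{L^2(\mu)}(x^d,\mathcal P_{d-1})^2$, where $\mathcal P_{d-1}=\mathrm{span}\{1,x,\dots,x^{d-1}\}$. So I must show that no degree-$(d-1)$ polynomial $g$ approximates $x^d$ too well in $L^2(\mu)$; equivalently, $\EE_\mu[(x^d-g(x))^2]\geq (\tfrac19 18^{-d})^2$ for all $g\in\mathcal P_{d-1}$. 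The key structural fact about a one-dimensional isotropic log-concave $\mu$ is that it puts a definite amount of mass, with a density bounded below, on an interval of definite length: concretely, there is a universal constant so that $\mu$ restricted to, say, $[-c_0,c_0]$ has density at least $c_1$ with respect to Lebesgue measure (this follows from standard one-dimensional log-concave facts — the density at the mean/median is bounded below by a universal constant, cf. the properties used in \cite{carbery2001distributional,nazarov2002geometric}, and isotropy fixes the scale). Then for any polynomial $h=x^d-g$,
\[
\EE_\mu[h^2]\;\geq\; c_1\int_{-c_0}^{c_0} h(x)^2\,dx \;=\; c_1 c_0\int_{-1}^{1} h(c_0 u)^2\,du .
\]
Writing $h(c_0u)=c_0^d u^d - \tilde g(u)$ with $\tilde g\in\mathcal P_{d-1}$, and using that $\int_{-1}^1(\cdot)^2 du=2\,\EE_{\mathrm{Uniform}[-1,1]}[(\cdot)^2]$ together with Lemma \ref{lem:interval}, the inner integral is at least $2c_0^{2d}\cdot c_{\mathrm{Uniform}[-1,1],d}^2\geq 2c_0^{2d}4^{-d}$. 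Hence $c_{\mu,d}^2\geq 2c_0c_1 (c_0^2/4)^d$, and choosing the numerical constants carefully yields $c_{\mu,d}\geq \tfrac19 18^{-d}$.

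The main work, and the only genuinely delicate point, is pinning down the universal constants $c_0,c_1$ for the "density bounded below on an interval" statement and then checking that the resulting base of the exponential is at most $18$ and the prefactor at least $1/9$. For an isotropic log-concave density $\rho$ on $\RR$, one has $\rho(m)\geq c$ at the median $m$ for a universal $c$, and by log-concavity $\rho$ stays above, say, $\rho(m)/2$ on an interval around $m$ whose half-length is comparable to $1/\rho(m)$, which by isotropy is $\Theta(1)$; combining with the variance-$1$ normalization gives explicit admissible values. I would quote the sharp known constants (e.g. $\rho(m)\ge \tfrac1{2}e^{-1}$-type bounds, or the cleaner estimate that an isotropic log-concave density is $\geq \tfrac1{8}$ on an interval of length $\geq \tfrac{1}{?}$ centered appropriately) from the log-concave literature rather than re-deriving them, then optimize. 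A careful bookkeeping of these constants — in particular making sure the shift by the median, which only translates $x^d$ by a bounded amount and thus only changes $g$, does not degrade the bound — is what produces the stated $\tfrac19\cdot 18^{-d}$; conceptually there is no obstacle, it is purely a matter of tracking constants through the restriction-and-rescaling argument.
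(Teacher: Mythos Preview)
Your approach is essentially the same as the paper's: restrict to an interval where the density is bounded below, rescale to $[-1,1]$, and invoke Lemma~\ref{lem:interval} (the paper phrases the last step as projecting onto the $d$-th Legendre polynomial, which is equivalent to your variational use of $c_{\mathrm{Uniform}[-1,1],d}$). The paper fills in your constants by quoting \cite[Lemma~5.5 and Theorem~5.14]{lovasz2007geometry} to get $\rho\geq\tfrac{1}{16}$ on $[-\tfrac19,\tfrac19]$ (so $c_0=\tfrac19$, $c_1=\tfrac{1}{16}$, and no median shift is needed since isotropic implies centered), yielding $c_{\mu,d}^2\geq\tfrac{1}{72}\cdot 9^{-2d}\cdot 4^{-d}$ and hence the stated bound.
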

\begin{proof}
	Write $f:=x^{d}-Q_{d-1}x^{d}$ and $\mu=g(x)dx$. Since $\mu$ is
	log-concave and isotropic, it follows e.g. from \cite[Lemma 5.5 and Theorem 5.14]{lovasz2007geometry}
	that $g(x)\geq\frac{1}{16}$ for all $x\in[-\frac{1}{9},\frac{1}{9}]$.
	Hence, we get:
	\begin{equation}
	c_{\mu,d}^{2}=\EE_{\mu}\left[f^{2}\right]\geq\frac{1}{16}\int_{-\frac{1}{9}}^{\frac{1}{9}}f(x)^{2}dx=\frac{1}{72}\cdot\frac{1}{2}\int_{-1}^{1}\widetilde{f}(t)^{2}dt,\label{eq:reduction to cube}
	\end{equation}
	where $\widetilde{f}(t):=f(\frac{1}{9}t)$ with  $\mathrm{coeff}_{d}(\widetilde{f})=9^{-d}.$
	Let us write $h_d(x):= \frac{\sqrt{2d+1}}{2^{d}d!}\frac{\partial^{d}}{\partial x^{d}}(x^{2}-1)^{d},$ for the Legendre polynomial of degree $d$, as in the proof of Lemma \ref{lem:interval}. So, from \eqref{eq:fourierdeco},
	$$\frac{1}{2}\int_{-1}^{1}\widetilde{f}(t)^{2}dt \geq  \left(\frac{1}{2}\int_{-1}^1 \tilde{f}(x)h_d(x)dx\right)^2.$$
	By first applying Item (1) of Lemma \ref{lem:monomials} and then Lemma \ref{lem:interval}, we get,
	$$\left(\frac{1}{2}\int_{-1}^{1} \tilde{f}(x)h_d(x)dx\right)^2 = \left(\frac{1}{2}\int_{-1}^{1} \frac{1}{9^d}x^dh_d(x)dx\right)^2 \geq \frac{1}{9^{2d}}\frac{1}{4^d}.$$
	The claim follows.
\end{proof}
\section{Anti-concentration of polynomials} \label{sec:anticoncentration}

\subsection{Product measures - Proof of Theorem \ref{thm:product}}

We now consider $\RR^{n}$ equipped with a product measure $\mu^{\otimes n}$,
where $\mu$ is some measure on $\RR$. Suppose that $\{p_{d}\}_{d=0}^{\infty}$
is the sequence of orthonormal polynomials, with respect to $\mu$,
as constructed in \eqref{eq:orthoprojection}.

To find an orthogonal decomposition of $L^{2}(\mu^{\otimes n})$,
for a multi-index $I=(I_{1},\dots,I_{n})\in\NN^{n}$ we define the
multivariate polynomial, 
\[
p_{I}(x):=\prod_{i=1}^{n}p_{I_{i}}(x_{i}).
\]
Since $L^{2}(\mu^{\otimes n})=L^{2}(\mu)^{\otimes n}$ we have that
the set $\{p_{I}\}_{I\in\NN^{n}}$ is a complete orthonormal system
in $L^{2}(\mu^{\otimes n})$. Our next step is to show that for degree
$d$ polynomials, the inner product with $p_{I}$ depends only on
the coefficient of $x^{I}$, as long as $\left|I\right|=d$. \begin{lemma}
\label{lem:multinomials} Fix $d\in\NN$ and let $q(x)=\sum\limits _{i=1}^{d}\sum\limits _{\substack{I\in\NN^{n}\\
\left|I\right|=i
}
}\alpha_{I}x^{I}$ be a degree $d$ polynomial in $\RR^{n}$. Then, for any $J\in\NN^{n}$
with $\left|J\right|=d$, 
\[
\langle q(x),p_{J}(x)\rangle_{L^{2}(\mu^{\otimes n})}=\alpha_{J}\prod_{i=1}^{n}c_{\mu,J_{i}},
\]
where $c_{\mu,J_{i}}$ is as in \eqref{eq:orthoprojection}. \end{lemma}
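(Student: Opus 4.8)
The plan is to reduce the multivariate statement to the one-dimensional facts already established in Lemma \ref{lem:monomials}, using the product structure of both the measure $\mu^{\otimes n}$ and the polynomials $p_J(x) = \prod_{i=1}^n p_{J_i}(x_i)$. Since the inner product is linear in $q$, it suffices to prove the identity when $q(x) = x^I$ is a single monomial with $|I| = i \leq d$; the general case follows by summing $\alpha_I \langle x^I, p_J\rangle$ over all $I$ appearing in $q$ and checking that only the term $I = J$ survives.

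So fix $I$ with $|I| \leq d$ and $J$ with $|J| = d$. By Fubini (justified because all the relevant moments of $\mu$ are finite, as assumed at the start of Section 3, so everything is in $L^2$), the product structure gives
\[
\langle x^I, p_J\rangle_{L^2(\mu^{\otimes n})} = \int_{\RR^n} \prod_{i=1}^n x_i^{I_i} p_{J_i}(x_i)\, d\mu^{\otimes n}(x) = \prod_{i=1}^n \langle x^{I_i}, p_{J_i}\rangle_{L^2(\mu)}.
\]
Now I would analyze each one-dimensional factor. If there is some coordinate $i$ with $I_i < J_i$, then $x^{I_i} \in \mathrm{span}\{1,x,\dots,x^{J_i - 1}\}$, which is orthogonal to $p_{J_i}$ by the Gram--Schmidt construction in \eqref{eq:orthoprojection} (equivalently, by Item (1) of Lemma \ref{lem:monomials} applied with the roles suitably noted, or simply because $p_{J_i} \perp Q_{J_i-1}L^2(\mu)$); hence that factor vanishes and so does the whole product. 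Therefore the product is nonzero only if $I_i \geq J_i$ for every $i$. But $\sum_i I_i = |I| \leq d = |J| = \sum_i J_i$, so $I_i \geq J_i$ for all $i$ forces $I_i = J_i$ for all $i$, i.e. $I = J$ (and in particular $|I| = d$). In that case each factor is $\langle x^{J_i}, p_{J_i}\rangle_{L^2(\mu)} = c_{\mu,J_i}$ by Item (2) of Lemma \ref{lem:monomials}, so $\langle x^J, p_J\rangle_{L^2(\mu^{\otimes n})} = \prod_{i=1}^n c_{\mu,J_i}$.

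Putting this together: in $q(x) = \sum_{i=1}^d \sum_{|I|=i} \alpha_I x^I$, every monomial $x^I$ with $|I| = i \leq d$ contributes $\alpha_I \prod_i \langle x^{I_i}, p_{J_i}\rangle_{L^2(\mu)}$, which is $0$ unless $I = J$, and the single surviving term $I = J$ contributes $\alpha_J \prod_{i=1}^n c_{\mu,J_i}$. This gives the claimed formula. I do not anticipate a serious obstacle here — the only point requiring a little care is the bookkeeping step that $I_i \geq J_i$ coordinatewise together with $|I| \leq |J|$ forces $I = J$, which is where the hypothesis $|J| = d$ (the top degree) is essential; without it a monomial of degree larger than $|J|$ could have nonzero inner product with $p_J$. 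I would state that implication explicitly rather than leaving it to the reader.
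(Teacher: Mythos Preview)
Your proof is correct and follows essentially the same approach as the paper: reduce to monomials by linearity, factorize via the product structure, and use Lemma~\ref{lem:monomials} on each one-dimensional factor. The only cosmetic difference is that the paper phrases the key step as ``since $|J|=d$ and $I\neq J$, there exists $j$ with $J_j>I_j$'' whereas you argue the contrapositive ($I_i\geq J_i$ for all $i$ plus $|I|\leq|J|$ forces $I=J$); these are the same observation.
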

\begin{proof} Clearly, we have, 
\[
\langle q(x),p_{J}(x)\rangle_{L^{2}(\mu^{\otimes n})}=\sum\limits _{i=1}^{d}\sum\limits _{\substack{I\in\NN^{n}\\
\left|I\right|=i
}
}\alpha_{I}\langle x^{I},p_{J}(x)\rangle_{L^{2}(\mu^{\otimes n})}.
\]
We first claim that if $I\neq J$ with $\left|I\right|\leq d$, then
$\langle x^{I},p_{J}(x)\rangle_{L^{2}(\mu^{\otimes n})}=0$. Indeed,
since $\left|J\right|=d$, necessarily, there exists some $j\in[n]$
such that $J_{j}>I_{j}$. We now use the product structure to write,
\[
\langle x^{I},p_{J}(x)\rangle_{L^{2}(\mu^{\otimes n})}=\prod_{i=1}^{n}\langle x_{i}^{I_{i}},p_{J_{i}}(x_{i})\rangle_{L^{2}(\mu)}=0.
\]
The second equality follows from Lemma \ref{lem:monomials} which
implies $\langle x_{j}^{I_{j}},p_{J_{j}}(x_{j})\rangle_{L^{2}(\mu)}=0$.
So, 
\[
\langle q(x),p_{J}(x)\rangle_{L^{2}(\mu^{\otimes n})}=\alpha_{J}\langle x^{J},p_{J}(x)\rangle_{L^{2}(\mu^{\otimes n})}=\alpha_{J}\prod_{i=1}^{n}\langle x_{i}^{J_{i}},p_{J_{i}}(x_{i})\rangle_{L^{2}(\mu)}=\alpha_{J}\prod_{i=1}^{n}c_{\mu,J_{i}}.
\]
where we have used Lemma \ref{lem:monomials} for the last equality.
\end{proof} We are now in a position to prove Theorem \ref{thm:product}.
\begin{proof}[Proof of Theorem \ref{thm:product}]
From \eqref{eq:fourierdeco}, we have, 
\begin{equation}
\mathrm{Var}_{\mu^{\otimes n}}(f)=\sum\limits _{\substack{I\in\NN^{n}\\
\left|I\right|\neq0
}
}\langle f(x),p_{I}(x)\rangle_{L^{2}(\mu^{\otimes n})}^{2}\geq\sum\limits _{\substack{I\in\NN^{n}\\
\left|I\right|=d
}
}\langle f(x),p_{I}(x)\rangle_{L^{2}(\mu^{\otimes n})}^{2}=\sum\limits _{\substack{I\in\NN^{n}\\
\left|I\right|=d
}
}\alpha_{I}^{2}\prod_{i=1}^{n}c^2_{\mu,I_{i}},\label{eq:product measures-lower bound on variance}
\end{equation}
where the second equality is Lemma \ref{lem:multinomials}. Since
$c_{\mu,I_{i}}>0$ there exists a constant $C_{\mu,d}$ such that
for any $I\in\NN^{n}$ with $\left|I\right|=d$, $\prod_{i=1}^{n}c_{\mu,I_{i}}^{2}\geq C_{\mu,d}.$
This concludes Item (1). \\
Item (2) is now a direct consequence
of \eqref{eq:product measures-lower bound on variance} and
Lemma \ref{lem:1dlogconcave}. Indeed, 
\[
\mathrm{Var}_{\mu^{\otimes n}}(f)\geq\sum\limits _{\substack{I\in\NN^{n}\\
\left|I\right|=d
}
}\alpha_{I}^{2}\prod_{i:I_{i}>0}\left(\frac{1}{9}\cdot\frac{1}{18^{I_{i}}}\right)^{2}\geq\sum\limits _{\substack{I\in\NN^{n}\\
\left|I\right|=d
}
}\alpha_{I}^{2}\left(\frac{1}{9^{d}}\cdot\frac{1}{18^{d}}\right)^{2}\geq\mathrm{coeff}_{d}^{2}(f)\cdot\frac{1}{2^{15d}}.
\]\qedhere
\end{proof}
\subsection{\label{subsec:A-sub-level-estimate}A sublevel estimate for log-concave
product measures}

The aim of this subsection is to show that, when specializing Theorem
\ref{thm:product} to the case of log-concave measures, we can translate
our variance estimates into estimates on small balls probabilities,
or sublevel estimates. This is essentially the first Item of Corollary
\ref{cor:smallball}.

Our main tool for this is the celebrated inequality of Carbery-Wright,
which we state in the form suited to our needs. 

\begin{theorem}{(\cite[Theorem 8]{carbery2001distributional})}
\label{thm:CW} Let $\mu$ be a log-concave measure on $\RR^{n}$
and let $f:\RR^{n}\to\RR$ be a polynomial of degree $d$. Then, if
$X\sim\mu$, for every $\eps>0$, 
\[
\PP\left(\left|f(X)\right|\leq\eps\right)\leq Cd\frac{\eps^{\frac{1}{d}}}{\EE\left[f(X)^{2}\right]^{\frac{1}{2d}}}.
\]
\end{theorem}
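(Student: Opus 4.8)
This is the Carbery--Wright inequality \cite{carbery2001distributional}, so I will only sketch the structure of a proof. \emph{Reduction.} By homogeneity we may rescale $f$ so that $\EE_{\mu}[f(X)^{2}]=1$; the assertion then becomes the dimension-free sublevel estimate: for every log-concave probability measure $\mu$ on $\RR^{n}$ (for all $n$) and every polynomial $f$ with $\deg f\le d$ and $\int f^{2}\,d\mu=1$, one has $\mu(\{|f|\le\eps\})\le Cd\,\eps^{1/d}$. The key structural observation is that the three functionals $\mu\mapsto\int d\mu$, $\mu\mapsto\int f^{2}\,d\mu$ and $\mu\mapsto\mu(\{|f|\le\eps\})$ are all \emph{linear} in $\mu$, while $\{|f|\le\eps\}$ is a semialgebraic set whose complexity is bounded in terms of $d$ alone.

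\emph{Reduction to dimension one.} I would then invoke the localization (``needle decomposition'') machinery for log-concave measures, cf.\ \cite{lovasz2007geometry}. To bound $\sup_{\mu}\mu(\{|f|\le\eps\})$ over log-concave $\mu$ subject to the two linear constraints $\int d\mu=1$ and $\int f^{2}\,d\mu=1$, it suffices to treat measures $\mu$ supported on a line segment with log-affine density. Restricting to such a needle and reparametrising the segment affinely turns $f$ into a univariate polynomial of degree $\le d$, so the problem collapses to the one-dimensional case; since localization is insensitive to the ambient dimension, no factor depending on $n$ is introduced, which is precisely what makes the final bound dimension-free. (An alternative to localization is the standard device of realising a log-concave measure on $\RR^{n}$ as a marginal of the uniform measure on a convex body in a higher-dimensional space and arguing there, but localization is cleaner here.)

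\emph{The one-dimensional estimate.} For a log-concave probability measure $\nu$ on $\RR$ (with log-affine density on an interval) and a polynomial $g$ of degree $\le d$ with $\int g^{2}\,d\nu=1$, I would prove $\nu(\{|g|\le\eps\})\le Cd\,\eps^{1/d}$ directly. Factoring $g(t)=a\prod_{j\le d}(t-z_{j})$ over $\CC$, the real set $\{|g|\le\eps\}$ is a union of at most $d$ intervals, and a Chebyshev / van der Corput type argument (the leading coefficient forces $g^{(d)}$ to be a large constant, in the spirit of Lemma \ref{lem:1dvdc}) bounds its Lebesgue measure by $Cd\,(\eps/|a|)^{1/d}$. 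One then passes to $\nu$-measure using unimodality of the log-concave density together with a lower bound on $|a|$ in terms of $\int g^{2}\,d\nu=1$, obtained from the standard comparison between the leading coefficient and the $L^{2}$-norm of a bounded-degree polynomial on the bulk of $\nu$. Bookkeeping of constants gives the linear factor $Cd$, and the exponent $1/d$ is sharp, as the example $g(t)=t^{d}$ shows.

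\emph{Main obstacle.} The delicate points are (i) phrasing the statement so that its two genuinely nonlinear features --- the normalisation exponent $1/(2d)$ and the total mass --- become bona fide linear constraints to which the needle reduction applies; and (ii) carrying out the one-dimensional estimate uniformly in $d$, where one must trade the smallness of the Lebesgue measure of $\{|g|\le\eps\}$ against a possibly spiky log-concave density and a possibly small leading coefficient. It is worth stressing that moment comparison alone (a reverse Hölder / Khinchine inequality for polynomials of log-concave vectors) does \emph{not} suffice, since bounded moment growth does not prevent a fixed fraction of the mass from sitting just above $0$; the log-concavity has to be exploited through the density itself, which is exactly what localization makes available.
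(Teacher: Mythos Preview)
The paper does not prove this statement at all: Theorem~\ref{thm:CW} is quoted verbatim from \cite[Theorem~8]{carbery2001distributional} and used as a black box (see the sentence preceding it: ``Our main tool for this is the celebrated inequality of Carbery--Wright, which we state in the form suited to our needs''). So there is nothing to compare against on the paper's side.

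That said, your sketch is a reasonable outline of one route to the result, namely the localization approach (in the spirit of \cite{nazarov2002geometric} rather than the original argument in \cite{carbery2001distributional}, which proceeds differently, via a direct convexity/extremality argument on log-concave weights and does not pass through needles). The reduction you describe is sound: the three functionals are indeed linear in $\mu$, and the Lov\'asz--Simonovits/Kannan--Lov\'asz--Simonovits localization lemma reduces the question to log-affine densities on segments, preserving the degree bound on $f$ and introducing no dimensional dependence. The one-dimensional step is where the real work lies, and your description is a bit thin there: the passage from the Lebesgue sublevel estimate $|\{|g|\le\eps\}|\lesssim d(\eps/|a|)^{1/d}$ to a $\nu$-measure bound with the correct $Cd$ prefactor, uniformly over all log-affine $\nu$ on an interval with $\int g^2\,d\nu=1$, requires a careful interplay between the density's exponential profile and the polynomial's root locations (this is exactly the content of the one-dimensional lemmas in \cite{carbery2001distributional} and \cite{nazarov2002geometric}). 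Your closing remark that reverse H\"older alone is insufficient is correct and well observed.
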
 Thus, the Carbery-Wright inequality says that an estimate
for the sublevel sets of a polynomial $f$ may be obtained by bounding
the second moment of $f$, which is precisely the content of Theorem
\ref{thm:product}. \begin{proof}[Proof of Items 1 in Corollary
\ref{cor:smallball}] Fix $y\in\RR$ and define the polynomial $f_{y}(x)=f(x)-y$.
It is clear that $\mathrm{coeff}_{d}(f_{y})=\mathrm{coeff}_{d}(f)$.
Combining this fact with Theorem \ref{thm:product}, we deduce, 
\[
\EE\left[f_{y}(X)^{2}\right]\geq\mathrm{Var}(f_{y}(X))\geq\frac{1}{2^{15d}}\mathrm{coeff}_{d}^{2}(f).
\]
Now, Theorem \ref{thm:CW} implies, 
\[
\PP\left(\left|f(X)-y\right|\leq\eps\right)=\PP\left(\left|f_{y}(X)\right|\leq\eps\right)\leq Cd\frac{\eps^{\frac{1}{d}}}{\EE\left[f_{y}(X)^{2}\right]^{\frac{1}{2d}}}\leq C'd\left(\frac{\eps}{\mathrm{coeff}_{d}(f)}\right)^{\frac{1}{d}},
\]
for some constant $C'>0$.\end{proof}

\subsection{Anti concentration on $L_{p}$ balls - Proof of Theorem \ref{thm:pballs}}

In this subsection we fix some $p\geq1$ and the measure $\mu$ on
$\RR^{n}$, with density $\frac{1}{\left(\frac{2}{p}\Gamma(\frac{1}{p})\right)^{n}}e^{-\|x\|_{p}^{p}}dx$,
where $\Gamma$ stands for the Gamma function. Observe that $\mu$
is a log-concave product measure. Recall that, 
\[
B_{p,n}=\{x\in\RR^{n}:\|x\|_{p}\leq1\},
\]
is the unit ball with respect to the norm $\|\cdot\|_{p}$ and that
if $Z=(Z_{1},\dots,Z_{n})\sim\mu$ and $U\sim\mathrm{Uniform}([0,1])$
is independent from $Z$, then 
\begin{equation}
X=U^{\frac{1}{n}}\frac{Z}{\|Z\|_{p}},\label{eq:sampling}
\end{equation}
is uniformly distributed on $B_{p,n}$ (see \cite{schechtman90volume}).
In other words, to generate $X$, one can first generate $Z$ and
normalize by $\|Z\|_{p}$ to obtain something which is distributed according to the normalized cone measure
on the boundary of $B_{p,n}$. To get a random vector uniformly distributed
on $B_{p,n}$ all that is left is to choose a random scale according
to $U^{\frac{1}{n}}$.

Before proceeding, we need the following technical lemma. \begin{lemma}
\label{lem:marginalmoments}Let $p\geq1$, and $Z,\mu$ as above. Then,
for any $k>-n$, 
\[
\EE\left[\|Z\|_p^k\right]=\frac{\Gamma\left(\frac{n+k}{p}\right)}{\Gamma\left(\frac{n}{p}\right)}.
\]
Moreover, if $n > k^2$ and $k  \geq 2$, then
$$\frac{1}{20}p^{\frac{k}{p}}n^{-\frac{k}{p}} \leq \EE\left[\frac{1}{\|Z\|_p^k}\right] = \frac{\Gamma\left(\frac{n-k}{p}\right)}{\Gamma\left(\frac{n}{p}\right)} \leq 25p^{\frac{k}{p}}n^{-\frac{k}{p}}.$$
\end{lemma}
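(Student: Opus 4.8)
The first identity is the standard Gamma-integral computation for the $L_p$ Gaussian: writing the density explicitly and passing to the ``radial'' variable adapted to $\|\cdot\|_p$, one has
\[
\EE\left[\|Z\|_p^k\right] = \frac{\int_{\RR^n} \|x\|_p^k e^{-\|x\|_p^p}\,dx}{\int_{\RR^n} e^{-\|x\|_p^p}\,dx}.
\]
The plan is to use the well-known fact (same computation underlying \cite{schechtman90volume}) that under $\mu$ the random variable $\|Z\|_p^p$ has a $\mathrm{Gamma}(n/p,1)$ distribution; equivalently, in $p$-polar coordinates the Jacobian produces a factor $r^{n-1}$ and the substitution $s = r^p$ turns $\int_0^\infty r^{k+n-1} e^{-r^p}\,dr$ into $\frac{1}{p}\int_0^\infty s^{(n+k)/p - 1} e^{-s}\,ds = \frac{1}{p}\Gamma\!\left(\frac{n+k}{p}\right)$. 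Taking the ratio of the $k$ and $0$ cases gives $\EE[\|Z\|_p^k] = \Gamma\!\left(\tfrac{n+k}{p}\right)/\Gamma\!\left(\tfrac{n}{p}\right)$, valid for $k > -n$ so that the integral converges. Applying this with $k$ replaced by $-k$ yields the claimed formula $\EE[\|Z\|_p^{-k}] = \Gamma\!\left(\tfrac{n-k}{p}\right)/\Gamma\!\left(\tfrac{n}{p}\right)$, which requires $n > k$ (guaranteed by $n > k^2$ and $k \geq 2$).

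It remains to sandwich $\Gamma\!\left(\tfrac{n-k}{p}\right)/\Gamma\!\left(\tfrac{n}{p}\right)$ between $\tfrac{1}{20} p^{k/p} n^{-k/p}$ and $25\, p^{k/p} n^{-k/p}$. Set $a = n/p$ and $t = k/p$, so the ratio is $\Gamma(a-t)/\Gamma(a)$ and the target is to show $\tfrac{1}{20} a^{-t} \leq \Gamma(a-t)/\Gamma(a) \leq 25\, a^{-t}$ (after absorbing $p^{k/p} n^{-k/p} = a^{-t}$). The main quantitative input will be standard bounds on ratios of Gamma functions: for instance the log-convexity of $\log\Gamma$, or Wendel's inequality / Gautschi-type bounds, give $\Gamma(a-t)/\Gamma(a) = a^{-t}(1 + O(t^2/a))$-type control; more robustly, I would use that $x \mapsto \Gamma(x+t)/\Gamma(x)$ is asymptotically $x^t$ together with explicit two-sided estimates valid for $x \geq t \geq 0$. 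The hypothesis $n > k^2$, i.e. $a > pt^2 \geq t^2$, is exactly what is needed to keep the correction factor bounded: when $a \geq t^2$ (and $t \geq 2/p$, which may be small, but also $a \geq n/p$ is large when $n$ is large) the multiplicative error stays within the interval $[\tfrac{1}{20}, 25]$ — a constant independent of $n, p, k$.

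The only genuine obstacle is the last step: producing clean \emph{absolute} constants $\tfrac1{20}$ and $25$ in the two-sided bound on $\Gamma(a-t)/\Gamma(a)$ uniformly over the relevant range of $a$ and $t$. The subtlety is that $t = k/p$ can range from small (when $p$ is large) to order $k$ (when $p=1$), while $a = n/p$ ranges correspondingly, and one must check the worst case. I would handle this by splitting into, say, $t \leq 1$ and $t > 1$: for $t \leq 1$ use that $\Gamma$ is within a bounded factor of $1$ near its minimum together with monotonicity to compare $\Gamma(a-t)$ and $a^{-t}\Gamma(a)$ directly; for $t > 1$ write $\Gamma(a-t)/\Gamma(a) = \prod_{j} (a - j)^{-1}$-type telescoping (when $t$ is close to an integer) or interpolate via log-convexity, and use $a > t^2$ to bound $\prod_j (1 - j/a)$ from below by a constant like $\tfrac{1}{20}$ and the comparison with $a^{-t}$ from above by $25$. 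Stirling's formula with explicit error terms ($\sqrt{2\pi}\, x^{x-1/2} e^{-x} \leq \Gamma(x) \leq \sqrt{2\pi}\, x^{x-1/2} e^{-x} e^{1/(12x)}$, valid for $x > 0$) applied to both numerator and denominator is the most mechanical route and I expect it to give the constants after a short computation using $\log\left(1 - t/a\right) \geq -t/a - t^2/a^2$ and $a \geq t^2$.
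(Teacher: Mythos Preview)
Your proposal is correct and follows essentially the same approach as the paper: the moment identity via $p$-polar coordinates and the substitution $s=r^p$, then the two-sided bound on $\Gamma((n-k)/p)/\Gamma(n/p)$ via Wendel's inequality for small arguments and Stirling's approximation with explicit error terms for larger ones. The paper splits into cases $n<p$ versus $n\geq p$ (equivalently $a<1$ versus $a\geq 1$) rather than your $t\leq 1$ versus $t>1$, but the tools and the role of the hypothesis $n>k^2$ are the same.
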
 

\begin{proof} Note that for any function $h:\RR_{\geq0}\to\RR$,
we have the change of coordinates formula: 
\begin{equation}
\int\limits _{\RR^{n}}h(\|x\|_{p})dx=\frac{2^{n}\Gamma\left(\frac{1}{p}\right)^{n}}{p^{n-1}\Gamma\left(\frac{n}{p}\right)}\int\limits _{0}^{\infty}r^{n-1}h(r)dr.\label{eq:ppolarintegration}
\end{equation}
The pre-factor can be verified by integrating against the density
of $\mu$ (see also \cite{barth2005probabilistic}). The identity in \eqref{eq:ppolarintegration}
immediately implies: 
\begin{align*}
\EE\left[\|Z\|_p^k\right]&=\int\limits _{\RR^{n}}\|x\|_{p}^{k}d\mu(x)  =\frac{1}{\left(\frac{2}{p}\Gamma(\frac{1}{p})\right)^{n}}\int\limits _{\RR^{n}}\|x\|_{p}^{k}e^{-\|x\|_{p}^{p}}dx=\frac{p}{\Gamma\left(\frac{n}{p}\right)}\int\limits _{0}^{\infty}r^{n+k-1}e^{-r^{p}}dr\\
 & =\frac{1}{\Gamma\left(\frac{n}{p}\right)}\int\limits _{0}^{\infty}t^{\frac{n+k}{p}-1}e^{-t}dt=\frac{\Gamma\left(\frac{n+k}{p}\right)}{\Gamma\left(\frac{n}{p}\right)}.
\end{align*}
Now, suppose that $k \geq 2$ and $n > k^2$. To estimate $\EE\left[\frac{1}{\|Z\|_p^k}\right]$, we first consider the case $n<p$. For this, we use Wendel's inequality
for ratios of Gamma functions \cite{jameson2013inequalities}, to
deduce,
\begin{equation*}
p^{\frac{k}{p}}n^{-\frac{k}{p}}\leq\left(\frac{n-k}{p}\right)^{-\frac{k}{p}}\leq\frac{\Gamma\left(\frac{n-k}{p}\right)}{\Gamma\left(\frac{n}{p}\right)}\leq\frac{p}{n-k}\cdot\left(\frac{n}{p}\right)^{1-\frac{k}{p}}\leq2p^{\frac{k}{p}}n^{-\frac{k}{p}}.
\end{equation*}
When $n\geq p$, we use Stirling's approximation
for the Gamma function (\cite{jameson2015simple}), and the inequality
$\left(1-\frac{1}{x}\right)^{x}<\frac{1}{e}<\left(1-\frac{1}{x}\right)^{x-1}$
for $x>1$, to deduce:
\begin{align*}
\frac{\Gamma\left(\frac{n-k}{p}\right)}{\Gamma\left(\frac{n}{p}\right)} & \leq\frac{\left(\frac{n-k}{p}\right)^{-\frac{1}{2}}\left(\frac{n-k}{pe}\right)^{\frac{n-k}{p}}e^{\frac{p}{12(n-k)}}}{\left(\frac{n}{p}\right)^{-\frac{1}{2}}\cdot\left(\frac{n}{pe}\right)^{\frac{n}{p}}}\leq2e^{\frac{1}{6}}\frac{\left(1-\frac{k}{n}\right)^{\frac{n}{p}}}{\left(\frac{n-k}{p}\right)^{\frac{k}{p}}}e^{\frac{k}{p}}\nonumber \\
& \leq4\frac{p^{\frac{k}{p}}}{(1-\frac{k}{n})^{\frac{k}{p}}}n^{-\frac{k}{p}}\leq4\frac{p^{\frac{k}{p}}}{(1-\frac{1}{k})^{\frac{k}{p}}}n^{-\frac{k}{p}}\leq 4(2e)^{\frac{1}{p}}p^{\frac{k}{p}}n^{-\frac{k}{p}}\leq 25p^{\frac{k}{p}}n^{-\frac{k}{p}}.
\end{align*}
To get a corresponding bound in the other direction,
we similarly have: 
\begin{equation*}
\frac{\Gamma\left(\frac{n-k}{p}\right)}{\Gamma\left(\frac{n}{p}\right)}\geq e^{-\frac{p}{12n}}\frac{\left(1-\frac{k}{n}\right)^{\frac{n}{p}}}{\left(\frac{n-k}{p}\right)^{\frac{k}{p}}}e^{\frac{k}{p}}\geq e^{-\frac{1}{12}}\cdot\frac{\left(1-\frac{k}{n}\right)^{\frac{k}{p}}}{\left(\frac{n}{p}\right)^{\frac{k}{p}}}\geq\frac{p^{\frac{k}{p}}}{2}\left(1-\frac{1}{k}\right)^{\frac{k}{p}}n^{-\frac{k}{p}}\geq\frac{1}{20}p^{\frac{k}{p}}n^{-\frac{k}{p}}.
\end{equation*}
Combining the above displays finishes the proof.
\end{proof} We now prove the main result of this section, a lower
bound for the second moment of a homogeneous polynomial over the unit
$L_{p}$ ball. The main theorem will follow by appropriately re-scaling
$B_{p,n}$ to be isotropic. \begin{lemma} \label{lem:unnormalizedanti}
Let the above notations prevail and let $f:\RR^{n}\to\RR$ be a homogeneous
polynomial of degree $d$. Then as long as $n>16d^{2}$, 
\[
\EE[f^{2}(X)]\geq C_{d}\cdot n^{-\frac{2d}{p}}\mathrm{coeff}_{d}^{2}(f).
\]
\end{lemma}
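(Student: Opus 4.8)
The plan is to relate the second moment of $f$ over the uniform measure on $B_{p,n}$ to the second moment over the product measure $\mu$ with density proportional to $e^{-\|x\|_p^p}$, for which Theorem \ref{thm:product} already gives a dimension-free bound. The bridge is the sampling identity \eqref{eq:sampling}: if $Z\sim\mu$ and $U\sim\mathrm{Uniform}([0,1])$ is independent, then $X = U^{1/n}\frac{Z}{\|Z\|_p}$ is uniform on $B_{p,n}$. Since $f$ is homogeneous of degree $d$, we have
\[
f(X) = U^{d/n}\frac{f(Z)}{\|Z\|_p^d},
\]
so that, using independence of $U$ and $Z$,
\[
\EE[f^2(X)] = \EE\left[U^{2d/n}\right]\cdot\EE\left[\frac{f^2(Z)}{\|Z\|_p^{2d}}\right] = \frac{n}{n+2d}\cdot\EE\left[\frac{f^2(Z)}{\|Z\|_p^{2d}}\right].
\]
The first factor is a harmless constant bounded below by (say) $\tfrac12$, so everything reduces to lower bounding $\EE\left[\frac{f^2(Z)}{\|Z\|_p^{2d}}\right]$.

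Next I would decouple $f^2(Z)$ from the normalizing factor $\|Z\|_p^{-2d}$. The natural tool is a correlation/Chebyshev-type inequality: $f^2(Z)$ is (in an appropriate sense) positively associated with $\|Z\|_p$, or at least one can use the FKG inequality for the product measure $\mu$ together with the fact that $Z\mapsto \|Z\|_p^{-2d}$ is a decreasing function of each $|Z_i|$ while... this is exactly the subtle point, since $f^2$ need not be monotone. A cleaner route that avoids monotonicity is to write $\|Z\|_p^{-2d}$ and note that on the event $\{\|Z\|_p^p \le t\}$ for a suitable threshold $t$ we have a pointwise lower bound $\|Z\|_p^{-2d}\ge t^{-2d/p}$; conditioning on the cone measure (the distribution of $Z/\|Z\|_p$) and integrating out the radial part, one sees that the conditional law of $Z$ given its direction has a density in the radius that one can compare to the unconditional one. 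In fact the cleanest version: condition on the "angular" part $\Theta = Z/\|Z\|_p$, under which $\|Z\|_p$ has a known Gamma-type law independent of $\Theta$, and $f(Z) = \|Z\|_p^d f(\Theta)$. Then
\[
\EE\left[\frac{f^2(Z)}{\|Z\|_p^{2d}}\right] = \EE\left[f^2(\Theta)\right] = \EE_\mu\left[\frac{f^2(Z)}{\|Z\|_p^{2d}}\right],
\]
and separately $\EE_\mu[f^2(Z)] = \EE[f^2(\Theta)]\cdot\EE[\|Z\|_p^{2d}]$ by the same independence. Combining,
\[
\EE[f^2(X)] \ge \frac12\,\frac{\EE_\mu[f^2(Z)]}{\EE_\mu[\|Z\|_p^{2d}]}.
\]

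Now both expectations on the right are tractable. For the numerator, Theorem \ref{thm:product} Item (2) applies after recentering/rescaling $\mu$ to be isotropic — or more directly, since $\mu$ is a log-concave product measure, one gets $\EE_\mu[f^2(Z)] \ge \mathrm{Var}_\mu(f) \ge c_d\,\mathrm{coeff}_d^2(f)$ for an explicit dimension-free $c_d$ (tracking the scaling factor between $\mu$ and its isotropic normalization, which contributes only a $d$-dependent constant since $f$ is homogeneous of degree $d$). For the denominator, Lemma \ref{lem:marginalmoments} with $k = 2d$ gives $\EE_\mu[\|Z\|_p^{2d}] = \Gamma\!\left(\frac{n+2d}{p}\right)/\Gamma\!\left(\frac{n}{p}\right)$, which by Stirling (or Wendel's inequality, as in the proof of that lemma) is at most $C_d\, n^{2d/p}$ for $n > 16d^2$. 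Substituting yields $\EE[f^2(X)] \ge C_d\, n^{-2d/p}\,\mathrm{coeff}_d^2(f)$, as claimed.

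The main obstacle is making the decoupling step fully rigorous: one must justify that $\|Z\|_p$ and the direction $Z/\|Z\|_p$ are genuinely independent under $\mu$ (this is the classical fact underlying \eqref{eq:sampling}, so it is available), and then carefully propagate the isotropic-normalization constant through Theorem \ref{thm:product} — the measure $\mu$ here is not isotropic, so one either rescales coordinates by the marginal standard deviation $\sigma_p$ of $\mu$ (which changes $\mathrm{coeff}_d(f)$ by $\sigma_p^d$, a $d$-dependent constant) or re-runs the orthogonal-polynomial argument of Section 3 directly for $\mu$, bounding $c_{\mu,d}$ from below. Everything else is bookkeeping with Gamma-function asymptotics already carried out in Lemma \ref{lem:marginalmoments}.
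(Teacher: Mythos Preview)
Your approach is correct and genuinely simpler than the paper's. The key identity you use --- that under $\mu$ the radius $\|Z\|_p$ and the direction $\Theta = Z/\|Z\|_p$ are independent (because the density $e^{-\|x\|_p^p}$ factors in $L_p$-polar coordinates) --- lets you write $\EE_\mu[f^2(Z)/\|Z\|_p^{2d}] = \EE[f^2(\Theta)] = \EE_\mu[f^2(Z)]/\EE_\mu[\|Z\|_p^{2d}]$ exactly, and then Theorem~\ref{thm:product} (after the $\sigma_p$-rescaling you flag, with $\sigma_p^2 = \Gamma(3/p)/\Gamma(1/p)\ge 1/4$ uniformly in $p\ge 1$) handles the numerator while a Stirling bound on $\Gamma((n+2d)/p)/\Gamma(n/p)$ handles the denominator. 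By contrast, the paper does not exploit this independence: it lower-bounds $\EE[f^2(Z)\|Z\|_p^{-2d}]$ by thresholding on the event $\{f^2(Z)>\delta\}$, uses Cauchy--Schwarz and the negative-moment estimates of Lemma~\ref{lem:marginalmoments} to control the complement, and then appeals to the Carbery--Wright inequality (via Corollary~\ref{cor:smallball}) to bound $\PP(f^2(Z)\le\delta)$. Your route therefore removes the dependence on Carbery--Wright from this lemma entirely and shortens the argument; the paper's route, on the other hand, is more robust in that it would still work for measures whose radial and angular parts are not independent, provided one has suitable control on the radial moments.
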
 \begin{proof} With the above notations, let us estimate
\begin{align*}
\EE\left[f^{2}(X)\right]=\EE\left[f^{2}\left(U^{\frac{1}{n}}\frac{Z}{\|Z\|_{p}}\right)\right] & =\EE\left[\frac{U^{\frac{2d}{n}}}{\|Z\|_{p}^{2d}}f^{2}(Z)\right]\\
 & =\EE\left[U^{\frac{2d}{n}}\right]\EE\left[\frac{1}{\|Z\|_{p}^{2d}}f^{2}(Z)\right]=\frac{1}{\frac{2d}{n}+1}\EE\left[\frac{1}{\|Z\|_{p}^{2d}}f^{2}(Z)\right].
\end{align*}
The first equality is \eqref{eq:sampling}, the second is the homogeneity
of $f$ and the third follows by independence of $U$. Fix $\delta>0$
and define the set, 
\[
A_{\delta}=\{x\in\RR^{n}:f^{2}(x)>\delta\}.
\]
So, we have 
\begin{equation}
\EE\left[\frac{1}{\|Z\|_{p}^{2d}}f^{2}(Z)\right]\geq\delta\EE\left[\frac{1}{\|Z\|_{p}^{2d}}{\bf 1}_{\{Z\in A_{\delta}\}}\right].\label{eq:bigevent}
\end{equation}
Moreover, by Cauchy-Schwartz, 
\[
\EE\left[\frac{1}{\|Z\|_{p}^{2d}}{\bf 1}_{\{Z\notin A_{\delta}\}}\right]\leq\sqrt{\EE\left[\frac{1}{\|Z\|_{p}^{4d}}\right]\PP\left(Z\notin A_{\delta}\right)}.
\]
Since we have assumed $n > 16d^2$, we can apply the second part of Lemma \ref{lem:marginalmoments} twice, for $k = 2d$ and $k = 4d$. Thus,
\begin{align}
\EE\left[f^{2}(X)\right] & \geq\frac{\delta}{\frac{2d}{n}+1}\EE\left[\frac{1}{\|Z\|_{p}^{2d}}{\bf 1}_{\{Z\in A_{\delta}\}}\right]\nonumber \\
 & =\frac{\delta}{\frac{2d}{n}+1}\left(\EE\left[\frac{1}{\|Z\|_{p}^{2d}}\right]-\EE\left[\frac{1}{\|Z\|_{p}^{2d}}{\bf 1}_{\{Z\notin A_{\delta}\}}\right]\right)\nonumber \\
 &\geq \frac{\delta}{\frac{2d}{n}+1}\left(\EE\left[\frac{1}{\|Z\|_{p}^{2d}}\right]-\sqrt{\EE\left[\frac{1}{\|Z\|_{p}^{4d}}\right]\PP\left(Z\notin A_{\delta}\right)}\right)\nonumber \\
 & \geq\frac{\delta p^{\frac{2d}{p}}}{20(\frac{2d}{n}+1)n^{\frac{2d}{p}}}\left(1-100\sqrt{\PP\left(Z\notin A_{\delta}\right)}\right).\label{eq:momentDecom}
\end{align}
We turn to estimate $\PP\left(Z\notin A_{\delta}\right)$. Applying
Lemma \ref{lem:marginalmoments} for a single coordinate, with $k=2$,
shows $\EE\left[Z_{1}^{2}\right]=\frac{\Gamma\left(\frac{3}{p}\right)}{\Gamma\left(\frac{1}{p}\right)}\geq\frac{1}{4}$,
where the inequality follows from Wendel's inequality, \cite{jameson2013inequalities}. Since $Z$
is also log-concave, we may invoke Item 1 of Corollary \ref{cor:smallball}.
So, 
\[
\PP\left(Z\notin A_{\delta}\right)=\PP\left(f^{2}(Z)\leq\delta\right)=\PP\left(\left|f(Z)\right|\leq\sqrt{\delta}\right)\leq Cd\left(\frac{\delta}{\mathrm{coeff}_{d}^{2}(f)}\right)^{\frac{1}{2d}}.
\]
Let us choose 
\[
\delta=\frac{\mathrm{coeff}_{d}^{2}(f)}{\left(10^{5}Cd\right)^{2d}}
\]
and plug it into \eqref{eq:momentDecom}. As long as $2d<n$, we obtain,
\[
\EE[f^{2}(X)]\geq\frac{\delta p^{\frac{2d}{p}}}{80}n^{-\frac{2d}{p}}\geq\frac{\delta}{80}n^{-\frac{2d}{p}}.
\]\qedhere
\end{proof} Theorem \ref{thm:pballs} is now an immediate consequence. 

\begin{proof}[Proof of Theorem \ref{thm:pballs}] Let $X=(X_{1}\dots,X_{n})\sim\mathrm{Uniform}(B_{p,n})$
and let $z_{p,n}=\EE\left[X_{1}^{2}\right]^{-\frac{1}{2}}$ be such
that $z_{p,n}X$ is isotropic, that is, $z_{p,n}X\sim\mathrm{Uniform}(\tilde{B}_{p,n})$.
It follows e.g. from \cite[Theorem 7]{barth2005probabilistic}, that
$z_{p,n}\geq C\cdot n^{\frac{1}{p}}$, for an absolute constant $C>0$.
If $n>16d^{2}$ then our claim follows by Lemma \ref{lem:unnormalizedanti}
and homogeneity, 
\[
\EE\left[f^{2}(z_{p,n}X)\right]=z_{p,n}^{2d}\EE\left[f^{2}(X)\right]\geq C^{2d}n^{\frac{2d}{p}}C_d\frac{\mathrm{coeff}_{d}^{2}(f)}{n^{\frac{2d}{p}}}=C^{2d}C_d\mathrm{coeff}_{d}^{2}(f).
\]
When $n\leq16d^{2}$, we can use the fact that $\tilde{B}_{p,n}$
contains a cube of length uniformly bounded from below by a constant
depending on $d$. Our claim then follows from Theorem \ref{thm:product}.
The proof is complete. \end{proof} We may now also prove Item (2)
of Corollary \ref{cor:smallball}. \begin{proof}[Proof of Item 2
in Corollary \ref{cor:smallball}] The proof is essentially identical
to the case of product measures. If $X\sim \mathrm{Uniform}(\tilde{B}_{p,n})$, from Theorem \ref{thm:CW},
\[
\PP\left(\left|f(X)\right|\leq\eps\right)\leq Cd\frac{\eps^{\frac{1}{d}}}{\EE\left[f(X)^{2}\right]^{\frac{1}{2d}}}\leq C_{d}\left(\frac{\eps}{\mathrm{coeff}_{d}(f)}\right)^{\frac{1}{d}},
\]
where the second inequality is Theorem \ref{thm:pballs}. \end{proof}

\section{Dimension-free van der Corput estimates}

Fix a measure $\mu$ on $\RR^{n}$ and a function $f:\RR^{n}\to\RR$.
The aim of this section is to bound the following quantity from above:
\[
\left|\int\limits _{\RR^{n}}e^{\mathrm{i}tf(x)}d\mu(x)\right|.
\]
In other words, if $f_{*}\mu$ is the push-forward of the measure
$\mu$, we wish to study the rate of decay of the Fourier coefficients
of $f_{*}\mu$. We first prove a variant of Lemma \ref{lem:1dvdc} for polynomials and isotropic log-concave measures on the real line. 
\begin{lemma} \label{lem:logconcavevdc}Let $\mu$ be an isotropic
log-concave measure on $\RR$, $f:\RR^n \to \RR$ a polynomial of degree $d$ and
$k\in[1,d]$, an integer. Then, for every $t\in \RR$,
\[
\left|\int\limits _{\{x\in\RR:|f^{(k)}(x)|\geq1\}}e^{\mathrm{i}tf(x)}d\mu(x)\right|\leq C\cdot dk\left|t\right|^{-\frac{1}{k}},
\]
for some universal constant $C > 0$.
\end{lemma}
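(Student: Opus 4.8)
The plan is to cut the integration domain $\{|f^{(k)}|\ge 1\}$ into $O(d)$ intervals, run a weighted van der Corput estimate on each, and sum; the only delicate point is keeping the resulting constant linear, not exponential, in $k$. I would first reduce: if $|t|\le 1$ the left-hand side is at most $\mu(\RR)=1\le dk\,|t|^{-1/k}$, and replacing $t$ by $-t$ merely conjugates the integral, so assume $t\ge 1$. Writing $\mu=\rho(x)\,dx$, I would use two standard facts about an isotropic log-concave $\rho$ on $\RR$: $\|\rho\|_\infty\le 1$ (the upper counterpart of the lower bound near the mode invoked in Lemma \ref{lem:1dlogconcave}), and --- since $\log\rho$ is concave, hence $\rho$ unimodal --- $\rho'$ changes sign at most once, so $\int_\RR|\rho'(x)|\,dx\le 2\|\rho\|_\infty\le 2$.

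Next I would dissect the domain. As $f^{(k)}$ is a polynomial of degree $d-k$, the set $\{x:|f^{(k)}(x)|\ge 1\}$ is a disjoint union of $N\le d-k+1\le d$ intervals $I_1,\dots,I_N$ on each of which $f^{(k)}$ has constant sign; replacing $f$ by $-f$ on a given $I_j$ if necessary (which leaves $|\int_{I_j}e^{itf}d\mu|$ unchanged) I may assume $f^{(k)}\ge 1$ there. Then, for a threshold $\alpha>0$ to be fixed later, I would cut $\RR$ further at the $\le d-2$ zeros of $f''$ and the $\le 2(d-1)$ solutions of $f'=\pm\alpha$; this produces $O(d)$ intervals in all, on each of which $f'$ is monotone, each being ``bad'' ($|f'|\le\alpha$) or ``good'' ($|f'|\ge\alpha$).

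On the bad part, for $k\ge 2$: since $(f')^{(k-1)}=f^{(k)}$ has modulus $\ge 1$ on each $I_j$, the classical one-dimensional sublevel-set estimate gives $\big|\{x\in I_j:|f'(x)|\le\alpha\}\big|\le c_{k-1}\,\alpha^{1/(k-1)}$ with $c_{k-1}=O(k)$ --- both the exponent $\tfrac1{k-1}$ and the order $\big((k-1)!\big)^{1/(k-1)}\asymp k$ of the constant being sharp, as $x\mapsto x^{k-1}/(k-1)!$ shows --- so, summing over $j\le N\le d$, the bad set has measure $O(dk)\,\alpha^{1/(k-1)}$ and contributes at most $\|\rho\|_\infty\,O(dk)\,\alpha^{1/(k-1)}$ to the integral. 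On a good interval $J=(a,b)$ I would integrate by parts, $\int_J e^{itf}\rho=\tfrac1{it}\int_J\tfrac{\rho}{f'}\,d(e^{itf})$, and use $|f'|\ge\alpha$ and the monotonicity of $f'$ (whence $\|1/f'\|_{\infty,J}\le1/\alpha$ and $\mathrm{TV}_J(1/f')\le1/\alpha$) to get $\big|\int_J e^{itf}\rho\big|\le\tfrac1t\big(\tfrac{2\|\rho\|_\infty}{\alpha}+\mathrm{TV}_J(\rho/f')\big)$ with $\mathrm{TV}_J(\rho/f')\le\|\rho\|_\infty\mathrm{TV}_J(1/f')+\|1/f'\|_{\infty,J}\mathrm{TV}_J(\rho)\le\tfrac{\|\rho\|_\infty}{\alpha}+\tfrac{\mathrm{TV}_J(\rho)}{\alpha}$; summing over the $O(d)$ disjoint good intervals, with $\sum_J\mathrm{TV}_J(\rho)\le\int_\RR|\rho'|\le2\|\rho\|_\infty$, the good part contributes at most $\tfrac{O(d)\,\|\rho\|_\infty}{t\alpha}$. (When $k=1$, taking $\alpha=1$ makes the bad set null and leaves only the $O(d)/t$ bound, which is exactly the assertion at $k=1$.)

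Finally, adding the two parts and using $\|\rho\|_\infty\le1$ yields $\big|\int_{\{|f^{(k)}|\ge1\}}e^{itf}d\mu\big|\le C\big(dk\,\alpha^{1/(k-1)}+\tfrac{d}{t\alpha}\big)$, and the choice $\alpha=(tk)^{-(k-1)/k}$ balances the two terms so each is $\le C\,dk\,t^{-1/k}$, giving the claim. The hard part will be exactly this accounting of constants: invoking Lemma \ref{lem:1dvdc} (or the textbook van der Corput lemma) as a black box on each $I_j$ injects a factor $\asymp 2^k$ from its inductive proof, so one must instead route through the sublevel-set estimate for $f'$, whose sole $k$-dependence is the $O(k)$ constant, and push the log-concave weight $\rho$ through the integration by parts --- controlled entirely by $\|\rho\|_\infty\le1$ and the unimodality bound on $\int|\rho'|$ --- without resurrecting large constants.
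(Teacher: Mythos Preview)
Your proposal is correct and follows essentially the same route as the paper: both split the domain according to whether $|f'|$ exceeds a threshold, handle the small-$|f'|$ part via the one-dimensional sublevel estimate for $f'$ (using $|f^{(k)}|\ge 1$ and picking up only an $O(k)$ constant), handle the large-$|f'|$ part by integration by parts on $O(d)$ intervals where $f'$ is monotone (controlling the weight via $\|\rho\|_\infty\le 1$ and unimodality), and balance the threshold to get $O(dk)\,|t|^{-1/k}$. The only cosmetic differences are your choice $\alpha=(tk)^{-(k-1)/k}$ versus the paper's $\beta=|t|^{-(k-1)/k}$, and your slightly sharper bookkeeping of $\sum_J\mathrm{TV}_J(\rho)\le\int_\RR|\rho'|$ rather than bounding each interval's contribution separately; neither affects the final estimate.
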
 \begin{proof} We start by observing that since $\mu$
is log-concave its density $\rho$ is uni-modal So, there exists
a point $x_{0}\in\RR$, such that $\rho$ is non-decreasing up to $x_{0}$
and non-increasing from $x_{0}$. This immediately implies $\int\limits _{a}^{b}\left|\rho'(x)\right|dx\leq2\sup\limits _{x\in\RR}\rho(x)$,
for every interval $[a,b]\subseteq\RR$. Furthermore, since $\mu$
is isotropic, by \cite[Lemma 5.5]{lovasz2007geometry}, $\sup\limits _{x\in\RR}\rho(x)\leq1$.\\

Let $\beta>0$ be a real number equal to $1$ if $k=1$, and to be
determined later for $k\geq2$, and define the sets, 
\[
A_{1}=\{x\in\RR:|f'(x)|\geq\beta\text{ and }|f^{(k)}(x)|\geq1\},
\]
\[
A_{2}=\{x\in\RR:|f'(x)|<\beta\text{ and }|f^{(k)}(x)|\geq1\}.
\]
We decompose the integral on these sets to obtain, 
\[
\left|\int\limits _{\{x\in\RR:|f^{(k)}(x)|\geq1\}}e^{\mathrm{i}tf(x)}d\mu(x)\right|\leq\left|\int\limits _{A_{1}}e^{\mathrm{i}tf(x)}d\mu(x)\right|+\left|\int\limits _{A_{2}}e^{\mathrm{i}tf(x)}d\mu(x)\right|.
\]
Since $f^{(k)}$ is a polynomial of degree less than $d$, its derivative may change signs at most $d$ times. So, $\{x\in\RR:|f^{(k)}(x)|\geq1\}$ can be decomposed as a union of $M$ pairwise disjoint intervals, with $M \leq d$.
Explicitly, we have the following identity,
\begin{equation}
\{x\in\RR:|f^{(k)}(x)|\geq1\}=\bigcup\limits _{i=1}^{M}[a_{i},b_{i}],\label{eq:partition into intervals}
\end{equation}
where on each interval either $f^{(k)}(x)\geq1$, or $f^{(k)}(x)\leq-1$.
For the region $A_{2}$, since $\sup\limits _{x\in\RR}\rho(x)\leq1$,
we get,
\[
\left|\int\limits _{A_{2}}e^{\mathrm{i}tf(x)}d\mu(x)\right|\leq\int\limits _{A_{2}}\rho(x)dx\leq\mathrm{Vol}(A_{2}).
\]
For each $1\leq i\leq M$, the set $A_{2}\cap[a_{i},b_{i}]$ is a
sublevel set of $f'$ restricted to the region $[a_{i},b_{i}]$. When $k=1$,
$\mathrm{Vol}(A_{2})=0$, by our choice of $\beta$, and we need only consider $A_1$.
If $k\geq2$, we invoke the sublevel estimate in \cite[Proposition 2.1]{CCW99}\footnote{Proposition 2.1 of \cite{CCW99} is stated for functions which are
defined on the entire real line, but the proof works for functions
defined on any interval (see Section 2 of \cite{CCW99}).}, on each interval $[a_{i},b_{i}]$ separately, and sum the corresponding
volumes to obtain, 
\begin{equation}
\mathrm{Vol}(A_{2})=\sum_{i=1}^{M}\mathrm{Vol}(A_{2}\cap[a_{i},b_{i}])\leq\sum_{i=1}^{M}Ck\beta^{\frac{1}{k-1}}\leq Cdk\beta^{\frac{1}{k-1}}.\label{eq:A2}
\end{equation}

To handle $A_1$, we use the fact that both $f'$ and $f''$ are polynomials of degree less than $d$. Thus, a similar reasoning to the one used before allows us to
refine the partition in \eqref{eq:partition into intervals} into no more than
$4d$ intervals, with the property that on each interval $f'$ is monotone, and
either $f'(x)\geq\beta$, or $f'(x)\leq-\beta$, or $|f'(x)|\leq\beta$.
In particular, we can write $A_{1}$ as a disjoint union of $M'\leq4d$
intervals taken from this refined partition 
\[
A_{1}=\bigcup\limits _{i=1}^{M'}[c_{i},d_{i}],
\]
such that on each interval $[c_{i},d_{i}]$, either $f^{(k)}(x)\geq1$,
or $f^{(k)}(x)\leq-1$, and moreover $f'$ is monotone. 

For each $1\leq i\leq M'$, we integrate by parts, and use the bounds
$\rho(x)\leq1$ and $|f'(x)|\geq\beta$, to obtain: 
\begin{align*}
\left|\int\limits _{c_{i}}^{d_{i}}e^{\mathrm{i}tf(x)}d\mu(x)\right| & =\left|\int\limits _{c_{i}}^{d_{i}}\frac{tf'(x)}{tf'(x)}e^{\mathrm{i}tf(x)}\rho(x)dx\right|\leq\left|\left(e^{\mathrm{i}tf(x)}\frac{\rho(x)}{tf'(x)}\right)\Big\vert_{c_{i}}^{d_{i}}\right|+\left|\int\limits _{c_{i}}^{d_{i}}e^{\mathrm{i}tf(x)}\left(\frac{\rho(x)}{tf'(x)}\right)'dx\right|\\
 & \leq\left|\frac{\rho(d_{i})}{tf'(d_{i})}\right|+\left|\frac{\rho(c_{i})}{tf'(c_{i})}\right|+\int\limits _{c_{i}}^{d_{i}}\rho(x)\left|\left(\frac{1}{tf'(x)}\right)'\right|dx+\int\limits _{c_{i}}^{d_{i}}|\rho'(x)|\frac{1}{|t||f'(x)|}dx\\
 & \leq\frac{2}{|t|\beta}+\frac{1}{|t|}\int\limits _{c_{i}}^{d_{i}}\left|\left(\frac{1}{f'(x)}\right)'\right|dx+\frac{1}{|t|\beta}\int\limits _{c_{i}}^{d_{i}}|\rho'(x)|dx\\
 & \leq\frac{4}{|t|\beta}+\frac{1}{|t|}\left|\int\limits _{c_{i}}^{d_{i}}\left(\frac{1}{f'(x)}\right)'dx\right|\leq\frac{4}{|t|\beta}+\frac{1}{|t|}\left(\frac{1}{|f'(d_{i})|}+\frac{1}{|f'(c_{i})|}\right)\leq\frac{6}{|t|\beta}.
\end{align*}
When moving between the third and fourth lines we have used the fact
that $f'(x)$ is monotone on $[c_{i},d_{i}]$. Summing over all intervals
$[c_{i},d_{i}]$, we get
\[
\left|\int\limits _{A_{1}}e^{\mathrm{i}tf(x)}d\mu(x)\right|\leq\sum\limits _{i=1}^{M'}\left|\int\limits _{c_{i}}^{d_{i}}e^{\mathrm{i}tf(x)}d\mu(x)\right|\leq M'\frac{6}{|t|\beta}\leq\frac{24d}{|t|\beta}.
\]
Coupling this with \eqref{eq:A2}, we obtain, 
\[
\left|\int\limits _{\{x\in\RR:|f^{(k)}(x)|\geq1\}}e^{\mathrm{i}tf(x)}d\mu(x)\right|\leq\frac{24d}{|t|\beta}+Cdk\beta^{\frac{1}{k-1}}\leq\frac{24dk}{|t|\beta}+Cdk\beta^{\frac{1}{k-1}}.
\]
To conclude the proof we take $\beta=\frac{1}{|t|^{\frac{k-1}{k}}}$.
\end{proof}
Our result for log-concave product measures is a consequence of
the one-dimensional estimate coupled with the anti-concentration
result, proven in Section \ref{sec:anticoncentration}.
\begin{proof}[Proof of Theorem \ref{thm:VDC}]
Let $\mu=\nu^{\otimes n}$ be an isotropic log-concave product measure
on $\RR^{n}$. For convenience we denote, 
\[
J(t):=\left|\int\limits _{\RR^{n}}e^{\mathrm{i}tf(x)}d\mu(x)\right|.
\]
Now, let $I\in\NN^{n}$ with $\left|I\right|=d$, be such that $M_{d}(f)=\alpha_{I}$
and fix some $\eps>0$, to be determined later. We write $I=(\tilde{I},I_{n})$,
where $\tilde{I}$ is a multi-index on $n-1$ indices. Without
loss of generality, we may assume that $I_{n}\geq1$. Define the
set, 
\[
A:=\left\{ x\in\RR^{n}:\left|\frac{\partial^{I_{n}}}{\partial x_{n}^{I_{n}}}f(x)\right|\geq\eps\right\} .
\]
If $\bar{A} := \RR^n \setminus A$, then, 
\begin{align}
J(t)\leq & \left|\int\limits _{A}e^{\mathrm{i}tf(x)}d\mu(x)\right|+\left|\int\limits _{\bar{A}}e^{\mathrm{i}tf(x)}d\mu(x)\right|.\label{eq:coruptdecomp}
\end{align}
We estimate each term separately. First, observe that $\frac{\partial^{I_{n}}}{\partial x_{n}^{I_{n}}}f$
is a polynomial of degree at most $d-I_{n}$ and, clearly $\mathrm{coeff}_{d-I_{n}}\left(\frac{\partial^{I_{n}}}{\partial x_{n}^{I_{n}}}f\right)\geq I_n!M_{d}(f)$.
Hence, by applying Corollary \ref{cor:smallball} to $\frac{\partial^{I_{n}}}{\partial x_{n}^{I_{n}}}f$,
one has, 
\begin{align}
\left|\int\limits _{\bar{A}}e^{\mathrm{i}tf(x)}d\mu(x)\right|\leq\PP\left(\left|\frac{\partial^{I_{n}}}{\partial x_{n}^{I_{n}}}f(X)\right|\leq\eps\right)\leq Cd\left(\frac{\eps}{I_n!M_{d}(f)}\right)^{\frac{1}{d-I_{n}}}.\label{eq:abar}
\end{align}
To deal with the first term in \eqref{eq:coruptdecomp}, write $x=(\tilde{x},x_{n})$,
and note that, 
\begin{align*}
\left|\int\limits _{A}e^{\mathrm{i}tf(x)}d\nu^{\otimes n}(x)\right| & \leq\int\limits _{\RR^{n-1}}\left|\int\limits _{-\infty}^{\infty}e^{\mathrm{i}tf(\tilde{x},x_{n})}{\bf 1}_{A}d\nu(x_{n})\right|d\nu^{\otimes n-1}(\tilde{x})\\
 & =\int\limits _{\RR^{n-1}}\left|\int\limits _{\{x_{n}:|f_{\tilde{x}}^{(I_{n})}(x_{n})|\geq\eps\}}e^{\mathrm{i}tf_{\tilde{x}}(x_{n})}d\nu(x_{n})\right|d\nu^{\otimes n-1}(\tilde{x}),
\end{align*}
where $f_{\tilde{x}}(x_{n}):=f(\tilde{x},x_{n})$.
We invoke Lemma \ref{lem:logconcavevdc} , with $k = I_n$, on the polynomial $\frac{1}{\eps}f_{\tilde{x}}$, which yields,
$$\left|\int\limits _{\{x_{n}:|f_{\tilde{x}}^{(I_{n})}(x_{n})|\geq\eps\}}e^{\mathrm{i}tf_{\tilde{x}}(x_{n})}d\nu(x_{n})\right|=\left|\int\limits _{\{x_{n}:|\frac{1}{\eps}f_{\tilde{x}}^{(I_{n})}(x_{n})|\geq1\}}e^{\mathrm{i}t\eps\frac{1}{\eps}f_{\tilde{x}}(x_{n})}d\nu(x_{n})\right|\leq\frac{C'dI_{n}}{(|t|\eps)^{\frac{1}{I_{n}}}}\leq\frac{eC'd}{(|t|\frac{\eps}{I_{n}!})^{\frac{1}{I_{n}}}},
$$
for some constant $C'>0$, where in the last inequality we have used
$k\leq e(k!)^{\frac{1}{k}}$.
We have thus established, 
\[
\left|J(t)\right|\leq Cd\left(\frac{\eps}{I_{n}!M_{d}(f)}\right)^{\frac{1}{d-I_{n}}}+\frac{eC'd}{\left(\left|t\right|\frac{\eps}{I_{n}!}\right)^{\frac{1}{I_{n}}}}.
\]
Choose $\eps=I_n!M_{d}(f)^{\frac{I_{n}}{d}}\cdot\frac{1}{\left|t\right|^{\frac{d-I_{n}}{d}}}$,
to get, 
\[
\left|J(t)\right|\leq\frac{(C+eC')d}{\left(M_{d}(f)\left|t\right|\right)^{\frac{1}{d}}},
\]
as required.\end{proof}

\section{Spectrum of the covariance matrix for tensor powers- the case of
the Euclidean ball\label{sec:spectrum}}

The goal of this section is to compute the spectrum of $\mathrm{Cov}(X^{\otimes d})$,
when $X\sim\mathrm{Uniform}(\tilde{B}_{2,n})$. Since $p=2$ in this
subsection, we simply write $B_{n}$ (resp. $\tilde{B}_{n}$) instead
of $B_{2,n}$ (resp. $\tilde{B}_{2,n}$), and set $\mu=\mathrm{Uniform}(\tilde{B}_{n})$.
We further denote by $R_{n}$ the radius of $\tilde{B}_{n}$ .

Recall that $X^{\otimes d}$ is a random vector in $\mathrm{Sym}_{d}(\RR^{n})$,
which we identify with $\mathcal{P}_{d}(\RR^{n})$, the space of all
real-valued homogeneous polynomials of degree $d$. Taking the inner
product $\langle\sum a_{I}x^{I},\sum b_{J}x^{J}\rangle:=\sum_{I}a_{I}b_{I}$,
with $\{x^{I}\}_{|I|=d}$ as an orthonormal basis, one can represent
$\mathrm{Cov}(X^{\otimes d})$ by the matrix $C:=\left\{ C_{I,J}\right\} _{\left|I\right|,\left|J\right|=d}$,
where 
\[
C_{I,J}=\EE_{\mu}\left[x^{I+J}\right]-\EE_{\mu}\left[x^{I}\right]\EE_{\mu}\left[x^{J}\right].
\]
As it turns out, it will be more convenient to work with a different
inner product, whose naturality will be apparent soon.

\begin{definition}[see \cite{BBEM90}]\label{Def:Bombieri inner product}
Let $f=\sum_{I}a_{I}x^{I}$ and $g=\sum_{J}b_{J}x^{J}$ be in $\mathcal{P}_{d}(\RR^{n})$.
Let $D_{f}$ be the partial differential operator $\sum_{I}a_{I}\partial^{I}$,
where $\partial^{I}=\partial x_{1}^{I_{1}}...\partial x_{n}^{I_{n}}$.
The \emph{Bombieri inner product} is defined as follows: 
\[
\langle f,g\rangle_{B}:=D_{f}(g)=\sum_{I}I!\cdot a_{I}b_{I},
\]
where $I!:=I_{1}!...I_{n}!$. We define the corresponding \emph{Bombieri
norm}: 
\[
\left\Vert f\right\Vert _{B}=\sqrt{\sum_{I}I!\cdot a_{I}^{2}}.
\]
\end{definition}
Let us record one important observation, which will be used later on. Given $f\in\mathcal{P}_{d-q}(\RR^{n})$,
$h\in\mathcal{P}_{q}(\RR^{n})$ and $g\in\mathcal{P}_{d}(\RR^{n})$,
we have the following identity (see e.g. \cite[Lemma 11]{BD95}),
\begin{equation}
\langle hf,g\rangle_{B}:=D_{hf}(g)=D_{f}(D_{h}(g))=\langle f,D_{h}(g)\rangle_{B}.\label{eq:Bombieri differential identity}
\end{equation}

To see the connection with the matrix $C$, write 
\[
\widetilde{C}:=\left\{ \widetilde{C}_{I,J}\right\} _{\left|I\right|,\left|J\right|=d},\text{ where }\widetilde{C}_{I,J}=\frac{\EE_{\mu}\left[x^{I+J}\right]-\EE_{\mu}\left[x^{I}\right]\EE\left[x^{J}\right]}{I!}.
\]
Then for every $f=\sum_{I}a_{I}x^{I}$ and $g=\sum_{J}b_{J}x^{J}$
in $\mathcal{P}_{d}(\RR^{n})$, one has 
\begin{align}
\langle\widetilde{C}f,g\rangle_{B} & =\left\langle \sum_{I}\left(\sum_{J}a_{J}\widetilde{C}_{I,J}\right)x^{I},\sum_{I}b_{I}x^{I}\right\rangle _{B}=\sum_{I,J}I!b_{I}a_{J}\widetilde{C}_{I,J}\nonumber \\
 & =\langle Cf,g\rangle=\langle f,g\rangle_{L^{2}(\mu)}-\langle f,1\rangle_{L^{2}(\mu)}\langle g,1\rangle_{L^{2}(\mu)}.\label{eq:Covariance matrix with Bombieri}
\end{align}
Note that $\widetilde{C}:=D\cdot C$, where $D$ is the diagonal matrix,
$D_{I,I}=\frac{1}{I!}$. Also, while $\tilde{C}$ is not symmetric,
it is self-adjoint with respect to the Bombieri inner product.

For an $N\times N$-matrix $M$ with non-negative eigenvalues, we
denote by $0\leq\lambda_{1}(M)\leq\dots\leq\lambda_{N}(M)$, its eigenvalues
in increasing order. The main result of this section is a complete
characterization of $\{\lambda_{i}(\tilde{C})\}$, along with their
corresponding eigenspaces (Theorem \ref{thm:spectrum of Ctilde}).
We then use the connection between $\tilde{C}$ and $C$, to deduce
information about the spectrum of $C$ (Corollary \ref{Cor:spectrum of C}).
Since the matrix $\tilde{C}$ depends on the parameters $n$ and $d$,
the same is also true for the quantities $\lambda_{i}(\tilde{C})$.
In the sequel, we suppress this dependence to simplify the notation.

\subsection{Spherical harmonics}

Before we state the main result, we need to collect a few basic facts
about spherical harmonics. We refer to \cite[Chapter 5]{ABR01} and
\cite[Chapter 2]{AG01} for more details.

We write $\mathcal{H}_{d}(\RR^{n})$ for the subspace of $\mathcal{P}_{d}(\RR^{n})$
consisting of all degree $d$ homogeneous harmonic polynomials on
$\RR^{n}$, and $\mathcal{H}_{d}(\mathbb{S}^{n})$ for its restriction
to the unit sphere $\mathbb{S}^{n}$. Let $\mu_{\mathbb{S}^{n}}$
be the unique $\mathrm{SO}_{n}(\RR)$-invariant probability measure
on the $n-1$-dimensional sphere $\mathbb{S}^{n}$. Denote by $L^{2}(\mathbb{S}^{n})$
the space of $L^{2}$-integrable real valued functions on the sphere,
with the inner product 
\[
\langle f,g\rangle_{L^{2}(\mathbb{S}^{n})}=\int_{\mathbb{S}^{n}}f\cdot gd\mu_{\mathbb{S}^{n}}.
\]

It turns out that the inner products $\langle\,,\,\rangle_{_{L^{2}(\mu)}}$,
$\langle\,,\,\rangle_{_{L^{2}(\mathbb{S}^{n})}}$ and $\langle\,,\,\rangle_{B}$
are comparable on the subspace of $d$-harmonic polynomials. \begin{lemma}\label{lem:integrals of harmonic polynomials}
Let $f,g\in\mathcal{H}_{d}(\RR^{n})$. Then we have: 
\[
\langle f,g\rangle_{_{L^{2}(\mathbb{S}^{n})}}=\gamma_{d,n}\cdot\langle f,g\rangle_{B}\text{ and }\langle f,g\rangle_{_{L^{2}(\mu)}}=\frac{n}{n+2d}R_{n}^{2d}\langle f,g\rangle_{_{L^{2}(\mathbb{S}^{n})}},
\]
\[
\gamma_{d,n}=\frac{1}{n(n+2)...(n+2d-2)}.
\]

\end{lemma}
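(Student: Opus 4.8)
The plan is to establish the two asserted identities separately; the second does not involve harmonicity and reduces to a polar-coordinates computation, while the first rests on a representation-theoretic uniqueness principle that reduces the whole statement to pinning down one scalar. For the identity $\langle f,g\rangle_{L^{2}(\mathbb{S}^{n})}=\gamma_{d,n}\langle f,g\rangle_{B}$ I would use two classical facts. First, the Bombieri inner product is invariant under the action $f\mapsto f\circ U$ of $O(n)$ on $\mathcal{P}_{d}(\RR^{n})$ (see \cite{BBEM90}); a quick way to see this is that $D_{f\circ U}=f(U\partial)$, that the vector operator $U\partial$ acts on functions of the form $g\circ U$ exactly as $\partial$ acts on $g$ (so $D_{f\circ U}(g\circ U)=(D_{f}g)\circ U$), and that $D_{f}g$ is a constant when $\deg f=\deg g$. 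Second, $\mathcal{H}_{d}(\RR^{n})$ is an irreducible real $O(n)$-representation for $d\ge1$ (standard, see \cite{ABR01}). Since $\langle\cdot,\cdot\rangle_{L^{2}(\mathbb{S}^{n})}$ is manifestly $O(n)$-invariant as well, and both are genuine inner products on $\mathcal{H}_{d}(\RR^{n})$, Schur's lemma forces them to be proportional, $\langle f,g\rangle_{L^{2}(\mathbb{S}^{n})}=\gamma_{d,n}\langle f,g\rangle_{B}$, for a single constant $\gamma_{d,n}>0$ (the case $d=0$ being trivial).

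It then remains to identify $\gamma_{d,n}$. I would extend both forms $\CC$-bilinearly to $\mathcal{H}_{d}(\RR^{n})\otimes\CC$ and test the identity on $h(x):=(x_{1}+\mathrm{i}x_{2})^{d}$, which is harmonic, paired against $\bar h(x)=(x_{1}-\mathrm{i}x_{2})^{d}$. Expanding $h=\sum_{k}\binom{d}{k}\mathrm{i}^{k}x_{1}^{d-k}x_{2}^{k}$ and using the definition of the Bombieri norm gives $\langle h,\bar h\rangle_{B}=\sum_{k}(d-k)!\,k!\,\binom{d}{k}^{2}=d!\sum_{k}\binom{d}{k}=2^{d}d!$. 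On the other hand $\langle h,\bar h\rangle_{L^{2}(\mathbb{S}^{n})}=\int_{\mathbb{S}^{n}}(x_{1}^{2}+x_{2}^{2})^{d}\,d\mu_{\mathbb{S}^{n}}$, which I would evaluate by expanding the power and invoking the standard monomial integrals
\[
\int_{\mathbb{S}^{n}}x_{1}^{2a}x_{2}^{2b}\,d\mu_{\mathbb{S}^{n}}=\frac{(2a-1)!!\,(2b-1)!!}{n(n+2)\cdots(n+2(a+b)-2)},
\]
which turns the integral into $\frac{1}{n(n+2)\cdots(n+2d-2)}\sum_{j}\binom{d}{j}(2j-1)!!(2d-2j-1)!!$; the remaining sum equals $\frac{d!}{2^{d}}\sum_{j}\binom{2j}{j}\binom{2d-2j}{d-j}=\frac{d!}{2^{d}}\cdot4^{d}=2^{d}d!$ by the classical identity $\sum_{j=0}^{d}\binom{2j}{j}\binom{2d-2j}{d-j}=4^{d}$ (e.g.\ from $(1-4x)^{-1/2}=\sum_{k}\binom{2k}{k}x^{k}$). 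Hence $\langle h,\bar h\rangle_{L^{2}(\mathbb{S}^{n})}=\tfrac{2^{d}d!}{n(n+2)\cdots(n+2d-2)}$, and since $\langle h,\bar h\rangle_{B}=2^{d}d!\neq0$ we get $\gamma_{d,n}=\frac{1}{n(n+2)\cdots(n+2d-2)}$.

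For the second identity, let $\sigma_{n}$ be the total surface measure of the unit sphere $\mathbb{S}^{n}\subset\RR^{n}$, so that $\mathrm{Vol}(\tilde B_{n})=\frac{\sigma_{n}}{n}R_{n}^{n}$. For $f,g$ homogeneous of degree $d$, writing $x=r\theta$ with $r\in[0,R_{n}]$ and $\theta\in\mathbb{S}^{n}$, and using $f(r\theta)g(r\theta)=r^{2d}f(\theta)g(\theta)$, polar coordinates give
\[
\langle f,g\rangle_{L^{2}(\mu)}=\frac{1}{\mathrm{Vol}(\tilde B_{n})}\left(\int_{0}^{R_{n}}r^{2d+n-1}\,dr\right)\sigma_{n}\,\langle f,g\rangle_{L^{2}(\mathbb{S}^{n})}=\frac{n}{n+2d}R_{n}^{2d}\langle f,g\rangle_{L^{2}(\mathbb{S}^{n})},
\]
which is the claimed formula.

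The main obstacle is the explicit determination of $\gamma_{d,n}$: Schur's lemma is soft and only yields proportionality, so extracting the precise constant genuinely requires the monomial spherical integrals together with the binomial identity above. (One could alternatively compute a Gaussian integral $\int_{\RR^{n}}h\bar h\,e^{-|x|^{2}/2}\,dx$ in two ways, but a concrete computation of this flavor seems unavoidable.) Everything else is either a citation to standard harmonic-analysis facts or a routine polar-coordinates calculation.
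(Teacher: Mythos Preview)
Your proof is correct. For the second identity, your polar-coordinates computation is essentially identical to the paper's. For the first identity, the paper does not give an argument at all: it simply cites \cite[Theorem 5.14]{ABR01}. Your route---Schur's lemma on the irreducible $O(n)$-module $\mathcal{H}_{d}(\RR^{n})$ to get proportionality, followed by a test on $h=(x_{1}+\mathrm{i}x_{2})^{d}$ to pin down $\gamma_{d,n}$---is therefore genuinely more self-contained. The cost is the explicit monomial spherical integral and the identity $\sum_{j}\binom{2j}{j}\binom{2d-2j}{d-j}=4^{d}$, which you correctly handle; the benefit is that the reader does not need to chase the reference, and the structural reason (irreducibility forces uniqueness of the invariant inner product up to scale) is made transparent. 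One small remark: your test vector requires $n\ge 2$, but for $n=1$ the space $\mathcal{H}_{d}(\RR)$ is zero for $d\ge 2$ and the $d\le 1$ cases are trivial, so nothing is lost.
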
 \begin{proof} The fact that $\langle f,g\rangle_{L^{2}(\mathbb{S}^{n})}=\gamma_{d,n}\cdot\langle f,g\rangle_{B}$
follows e.g. from \cite[Theorem 5.14]{ABR01}. For the second claim,
recall that the isotropic ball $\tilde{B}_{n}$ has radius $R_{n}$
and volume $V_{n}=R_{n}^{n}\cdot\mathrm{Vol}(B_{n})$. Writing $d\sigma$
for the surface measure on $\mathbb{S}^{n}$ (so that $\sigma=n\cdot\mathrm{Vol}(B_{n})\mu_{\mathbb{S}^{n}}$),
one has 
\begin{align*}
\langle f,g\rangle_{L^{2}(\mu)} & =\int_{\tilde{B}_{n}}f(x)g(x)d\mu(x)=\frac{1}{R_{n}^{n}\cdot\mathrm{Vol}(B_{n})}\int_{0}^{R_{n}}r^{2d+n-1}\left(\int_{\mathbb{S}^{n}}f(x)\cdot g(x)d\sigma(x)\right)dr\\
 & =\frac{n}{n+2d}R_{n}^{2d}\left(\int_{\mathbb{S}^{n}}f(x)\cdot g(x)d\mu_{\mathbb{S}^{n}}(x)\right)dr=\frac{n}{n+2d}R_{n}^{2d}\langle f,g\rangle_{L^{2}(\mathbb{S}^{n})}.
\end{align*}
\end{proof} \begin{lemma}[{see e.g. \cite[Theorem 5.12]{ABR01}
and \cite[Theorem 2.1.1]{AG01}}]\label{lem:structure theory of homogeneous polynomials}~ 
\begin{enumerate}
\item The Hilbert space $L^{2}(\mathbb{S}^{n})$ can be decomposed into
a direct sum $L^{2}(\mathbb{S}^{n})=\widehat{\bigoplus}_{l\in\NN}\mathcal{H}_{l}(\mathbb{S}^{n})$,
where $\mathcal{H}_{l}(\mathbb{S}^{n})$ is orthogonal to $\mathcal{H}_{m}(\mathbb{S}^{n})$
for every $m\neq l$. 
\item For each $d\geq2$, we have an $\langle\,,\,\rangle_{B}$-orthogonal
decomposition 
\begin{equation}
\mathcal{P}_{d}(\RR^{n})=\mathcal{H}_{d}(\RR^{n})\oplus\left\Vert x\right\Vert ^{2}\mathcal{H}_{d-2}(\RR^{n})\oplus...\oplus\left\Vert x\right\Vert ^{2\left\lfloor \frac{d}{2}\right\rfloor }\mathcal{H}_{d-2\left\lfloor \frac{d}{2}\right\rfloor }(\RR^{n}).\label{eq:orthogonal decomposition of the Bombieri norm}
\end{equation}
\end{enumerate}
\end{lemma}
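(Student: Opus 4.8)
Both items are standard facts about spherical harmonics, and one may simply invoke \cite[Chapter 5]{ABR01} and \cite[Chapter 2]{AG01}; nevertheless, here is the line of argument I would follow. The engine is the single algebraic identity \eqref{eq:Bombieri differential identity}: specializing it to $h=\|x\|^{2}\in\mathcal{P}_{2}(\RR^{n})$ and noting $D_{\|x\|^{2}}=\sum_{i}\partial_{i}^{2}=\Delta$, we get that for $f\in\mathcal{P}_{d-2}(\RR^{n})$ and $g\in\mathcal{P}_{d}(\RR^{n})$, $\langle\|x\|^{2}f,g\rangle_{B}=\langle f,\Delta g\rangle_{B}$; that is, multiplication by $\|x\|^{2}$ and $\Delta$ are mutually $\langle\cdot,\cdot\rangle_{B}$-adjoint. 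Since $\langle\cdot,\cdot\rangle_{B}$ is positive definite and $\Delta$ maps $\mathcal{P}_{d}(\RR^{n})$ into $\mathcal{P}_{d-2}(\RR^{n})$, the $\langle\cdot,\cdot\rangle_{B}$-orthogonal complement of $\|x\|^{2}\mathcal{P}_{d-2}(\RR^{n})$ inside $\mathcal{P}_{d}(\RR^{n})$ is precisely $\ker\big(\Delta|_{\mathcal{P}_{d}(\RR^{n})}\big)=\mathcal{H}_{d}(\RR^{n})$. Thus $\mathcal{P}_{d}(\RR^{n})=\mathcal{H}_{d}(\RR^{n})\oplus\|x\|^{2}\mathcal{P}_{d-2}(\RR^{n})$, an orthogonal direct sum.

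For Item (2) I would iterate this splitting (applying it in turn to $\mathcal{P}_{d-2}(\RR^{n})$, $\mathcal{P}_{d-4}(\RR^{n})$, and so on), which yields $\mathcal{P}_{d}(\RR^{n})=\bigoplus_{j=0}^{\lfloor d/2\rfloor}\|x\|^{2j}\mathcal{H}_{d-2j}(\RR^{n})$ as vector spaces. What still needs checking is that distinct summands are $\langle\cdot,\cdot\rangle_{B}$-orthogonal, since multiplication by $\|x\|^{2}$ is not an isometry and orthogonality is not inherited automatically from the iteration. For $i<j$, $h\in\mathcal{H}_{d-2i}(\RR^{n})$ and $h'\in\mathcal{H}_{d-2j}(\RR^{n})$, I would apply \eqref{eq:Bombieri differential identity} with $\|x\|^{2i}$ in place of $h$ to write $\langle\|x\|^{2i}h,\|x\|^{2j}h'\rangle_{B}=\langle h,\Delta^{i}(\|x\|^{2j}h')\rangle_{B}$, then use the elementary formula $\Delta(\|x\|^{2a}p)=2a(2a+2m+n-2)\|x\|^{2a-2}p$, valid for $p\in\mathcal{H}_{m}(\RR^{n})$ (a one-line consequence of the product rule together with Euler's identity $\langle x,\nabla p\rangle=mp$), to conclude $\Delta^{i}(\|x\|^{2j}h')=c\,\|x\|^{2(j-i)}h'$ for a positive constant $c$. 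Finally, by symmetry of $\langle\cdot,\cdot\rangle_{B}$ and \eqref{eq:Bombieri differential identity} once more, $\langle h,\|x\|^{2(j-i)}h'\rangle_{B}=\langle h',\Delta^{j-i}h\rangle_{B}=0$ because $h$ is harmonic and $j-i\geq1$. This short computation is the only genuinely non-formal point, and I expect it to be the main (minor) obstacle.

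For Item (1) I would first establish orthogonality of $\mathcal{H}_{l}(\mathbb{S}^{n})$ and $\mathcal{H}_{m}(\mathbb{S}^{n})$ for $l\neq m$: Green's identity on the unit ball applied to $f\in\mathcal{H}_{l}(\RR^{n})$ and $g\in\mathcal{H}_{m}(\RR^{n})$, together with $\partial_{\nu}f=\langle x,\nabla f\rangle=lf$ and $\partial_{\nu}g=mg$ on $\mathbb{S}^{n}$, gives $(m-l)\langle f,g\rangle_{L^{2}(\mathbb{S}^{n})}=0$. For the density of $\widehat{\bigoplus}_{l}\mathcal{H}_{l}(\mathbb{S}^{n})$ in $L^{2}(\mathbb{S}^{n})$: restricting the decomposition $\mathcal{P}_{d}(\RR^{n})=\mathcal{H}_{d}(\RR^{n})\oplus\|x\|^{2}\mathcal{P}_{d-2}(\RR^{n})$ to $\mathbb{S}^{n}$, where $\|x\|^{2}\equiv1$, and inducting on $d$ shows that the restriction to $\mathbb{S}^{n}$ of every homogeneous polynomial---hence of every polynomial---lies in the algebraic span of $\bigcup_{l}\mathcal{H}_{l}(\mathbb{S}^{n})$; since polynomial restrictions are uniformly dense in $C(\mathbb{S}^{n})$ by Stone--Weierstrass and $C(\mathbb{S}^{n})$ is dense in $L^{2}(\mathbb{S}^{n})$, this span is $L^{2}$-dense, and combined with the orthogonality just proved we obtain the orthogonal Hilbert space decomposition $L^{2}(\mathbb{S}^{n})=\widehat{\bigoplus}_{l\in\NN}\mathcal{H}_{l}(\mathbb{S}^{n})$.
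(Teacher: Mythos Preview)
Your argument is correct. Note, however, that the paper does not actually prove this lemma: it is stated with citations to \cite[Theorem 5.12]{ABR01} and \cite[Theorem 2.1.1]{AG01} and used as a black box. Your proposal therefore goes beyond what the paper does, supplying a self-contained proof where the paper is content to quote the literature. The adjointness of multiplication by $\|x\|^{2}$ and $\Delta$ under $\langle\cdot,\cdot\rangle_{B}$, your iterated use of $\Delta(\|x\|^{2a}p)=2a(2a+2m+n-2)\|x\|^{2a-2}p$ for $p\in\mathcal{H}_{m}$, and the Green's identity / Stone--Weierstrass route for Item~(1) are all standard and sound; in fact the paper later invokes exactly the formula $\Delta^{\circ i}(\|x\|^{2i}g)=b_{i}g$ (see \eqref{eq:b_i}) in the proof of Theorem~\ref{thm:spectrum of Ctilde}, so your computation dovetails with what the paper needs downstream.
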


\subsection{Calculation of the spectrum}
We are now ready to state the main theorem which describes the spectrum
of $\widetilde{C}$, and in fact shows that the decomposition in \eqref{eq:orthogonal decomposition of the Bombieri norm}
is an eigenspace decomposition. \begin{theorem} \label{thm:spectrum of Ctilde}
Each subspace $\left\Vert x\right\Vert ^{2i}\mathcal{H}_{d-2i}(\RR^{n})$
of $\mathcal{P}_{d}(\RR^{n})$, with $i\in\left\{ 0,\dots,\left\lfloor \frac{d}{2}\right\rfloor \right\} $,
is a $\widetilde{C}$-eigenspace with eigenvalue $\eta_{i}$, where
\[
\eta_{i}=R_{n}^{2d}\frac{n}{n+2d}\cdot\frac{1}{2^{i}i!n(n+2)...(n+2d-2i-2)}
\]
if $i<\frac{d}{2}$, and whenever $d$ is even, 
\[
\eta_{\frac{d}{2}}=R_{n}^{2d}\frac{d^{2}}{2^{\frac{d}{2}}\left(\frac{d}{2}\right)!\left(n+2d\right)\left(n+d\right)\prod\limits _{j=0}^{\frac{d}{2}-1}(d+n-2j)}.
\]
In particular, the multiplicity $\mathrm{mult}(\eta_{i})$ of the
eigenvalue $\eta_{i}$ is equal to the dimension of $\mathcal{H}_{d-2i}(\RR^{n})$:
\[
\mathrm{mult}(\eta_{i})=\left(\begin{array}{c}
n+d-2i-1\\
n-1
\end{array}\right)-\left(\begin{array}{c}
n+d-2i-3\\
n-1
\end{array}\right).
\]
\end{theorem}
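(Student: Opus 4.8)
The plan is to show that each summand $\|x\|^{2i}\mathcal{H}_{d-2i}(\RR^n)$ in the Bombieri-orthogonal decomposition \eqref{eq:orthogonal decomposition of the Bombieri norm} is $\widetilde{C}$-invariant, and that $\widetilde{C}$ acts on it as multiplication by a scalar $\eta_i$. Since $\widetilde{C}$ is self-adjoint with respect to $\langle\,,\,\rangle_B$ and the decomposition is $\langle\,,\,\rangle_B$-orthogonal, it suffices to check that $\langle\widetilde{C}f,g\rangle_B = \eta_i\langle f,g\rangle_B$ for all $f,g$ in a fixed summand $\|x\|^{2i}\mathcal{H}_{d-2i}(\RR^n)$, and that $\langle\widetilde{C}f,g\rangle_B = 0$ when $f,g$ lie in distinct summands. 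By \eqref{eq:Covariance matrix with Bombieri}, $\langle\widetilde{C}f,g\rangle_B = \langle f,g\rangle_{L^2(\mu)} - \langle f,1\rangle_{L^2(\mu)}\langle g,1\rangle_{L^2(\mu)}$, so everything reduces to computing $L^2(\mu)$ inner products of elements of $\|x\|^{2i}\mathcal{H}_{d-2i}(\RR^n)$. Note that for $d\geq 1$ the correction term $\langle f,1\rangle_{L^2(\mu)}\langle g,1\rangle_{L^2(\mu)}$ vanishes unless $d$ is even and $f$ (or $g$) has a nonzero $\|x\|^d$-component, because by radial symmetry $\langle h,1\rangle_{L^2(\mu)} = 0$ for any homogeneous $h$ of odd degree, and more generally any harmonic component of positive degree integrates to zero against constants on each sphere; so the correction term only affects the top summand $\|x\|^d\mathcal{H}_0$ when $d$ is even, which is exactly the source of the exceptional formula for $\eta_{d/2}$.

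The computational heart is the following. For $f = \|x\|^{2i}p$ and $g=\|x\|^{2j}q$ with $p\in\mathcal{H}_{d-2i}(\RR^n)$, $q\in\mathcal{H}_{d-2j}(\RR^n)$, one has $\langle f,g\rangle_{L^2(\mu)} = \int_{\tilde{B}_n}\|x\|^{2(i+j)}p(x)q(x)\,d\mu(x)$; passing to polar coordinates as in the proof of Lemma \ref{lem:integrals of harmonic polynomials} separates this into a radial integral $\int_0^{R_n} r^{2(i+j)+2d-2i-2j+n-1}\,dr = \int_0^{R_n} r^{2d+n-1}\,dr$ times $\int_{\mathbb{S}^n} p\cdot q\,d\mu_{\mathbb{S}^n}$, which is $0$ when $p,q$ have different degrees, i.e. when $i\neq j$. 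This gives the block-diagonality. Within a single summand ($i=j$), I get $\langle \|x\|^{2i}p, \|x\|^{2i}q\rangle_{L^2(\mu)} = \frac{n}{n+2d}R_n^{2d}\langle p,q\rangle_{L^2(\mathbb{S}^n)} = \frac{n}{n+2d}R_n^{2d}\gamma_{d-2i,n}\langle p,q\rangle_B$, using both parts of Lemma \ref{lem:integrals of harmonic polynomials}. So $\widetilde{C}$ restricted to $\|x\|^{2i}\mathcal{H}_{d-2i}(\RR^n)$ is the scalar $\frac{n}{n+2d}R_n^{2d}\gamma_{d-2i,n}$ times the operator $f\mapsto f'$ where $\langle f',g\rangle_B = \langle p,q\rangle_B$ for $f=\|x\|^{2i}p$, $g=\|x\|^{2i}q$. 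That last operator is not the identity on $\|x\|^{2i}\mathcal{H}_{d-2i}$ in the Bombieri norm, because $\|\|x\|^{2i}p\|_B^2 \neq \|p\|_B^2$ in general; I need the ratio $\langle p,q\rangle_B / \langle \|x\|^{2i}p, \|x\|^{2i}q\rangle_B$, which I expect to be a universal constant $c_{i,d,n}$ depending only on $i,d,n$. To pin it down I will use the differential identity \eqref{eq:Bombieri differential identity}: $\langle \|x\|^{2i}p, \|x\|^{2i}q\rangle_B = \langle p, D_{\|x\|^{2i}}(\|x\|^{2i}q)\rangle_B$, and since $\Delta^i$ applied to $\|x\|^{2i}q$ with $q$ harmonic of degree $d-2i$ yields a known scalar multiple of $q$ (the standard identity $\Delta(\|x\|^{2}h) $ for harmonic $h$, iterated — giving a factor like $\prod_{\ell=0}^{i-1} 2(\ell+1)(n+2(d-2i)+2\ell)\cdots$, more precisely $\Delta^i(\|x\|^{2i}q) = 4^i i!\,\binom{?}{} \cdots q$ with an explicit constant), I can evaluate $D_{\|x\|^{2i}} = \Delta^i$ on this term and divide out.

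Once the scalar constant $c_{i,d,n} = \langle\|x\|^{2i}p,\|x\|^{2i}q\rangle_B/\langle p,q\rangle_B$ is computed from the iterated-Laplacian identity, I assemble $\eta_i = \frac{n}{n+2d}R_n^{2d}\gamma_{d-2i,n}/c_{i,d,n}$, plug in $\gamma_{d-2i,n} = \frac{1}{n(n+2)\cdots(n+2(d-2i)-2)}$, and verify it matches the stated expression $R_n^{2d}\frac{n}{n+2d}\cdot\frac{1}{2^i i!\, n(n+2)\cdots(n+2d-2i-2)}$ for $i<d/2$. For the top summand when $d$ is even ($i=d/2$, $\mathcal{H}_0 = $ constants, spanned by $\|x\|^d$), I redo the computation directly: $\langle\|x\|^d,\|x\|^d\rangle_{L^2(\mu)} = \frac{n}{n+2d}R_n^{2d}$, subtract the correction $\langle\|x\|^d,1\rangle_{L^2(\mu)}^2$ (which is nonzero here and equals $\big(\tfrac{n}{n+d}R_n^d\big)^2$ or similar after the radial integral), compute $\|\|x\|^d\|_B^2 = \Delta^{d/2}(\|x\|^d)\cdot$(constant) $= (d!)\cdot$(something), and divide, arriving at the stated $\eta_{d/2}$. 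Finally, the multiplicity statement is immediate: $\mathrm{mult}(\eta_i) = \dim\|x\|^{2i}\mathcal{H}_{d-2i}(\RR^n) = \dim\mathcal{H}_{d-2i}(\RR^n) = \binom{n+d-2i-1}{n-1} - \binom{n+d-2i-3}{n-1}$, the standard dimension formula for spherical harmonics. The main obstacle I anticipate is bookkeeping the constant $c_{i,d,n}$ from the iterated Laplacian $\Delta^i(\|x\|^{2i}q)$ correctly — getting the product of factors $2^i i!$ and the telescoping $n(n+2)\cdots$ products to line up exactly with the claimed $\eta_i$, and separately handling the $i=d/2$ case where the constant term correction genuinely contributes.
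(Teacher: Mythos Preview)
Your proposal is correct and follows essentially the same route as the paper: reduce $\langle\widetilde{C}f,g\rangle_B$ to $L^2(\mu)$-inner products via \eqref{eq:Covariance matrix with Bombieri}, use polar coordinates and spherical-harmonic orthogonality (Lemmas \ref{lem:integrals of harmonic polynomials} and \ref{lem:structure theory of homogeneous polynomials}) to get block-diagonality and the factor $\tfrac{n}{n+2d}R_n^{2d}\gamma_{d-2i,n}$, then compute the Bombieri ratio $\langle\|x\|^{2i}p,\|x\|^{2i}q\rangle_B/\langle p,q\rangle_B$ via \eqref{eq:Bombieri differential identity} and the iterated-Laplacian identity $\Delta^{\circ i}(\|x\|^{2i}q)=b_i q$ with $b_i=2^i i!\prod_{j=1}^i(n+2d-2j-2i)$, treating $i=d/2$ separately. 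The only cosmetic difference is that the paper computes the Rayleigh quotient $\langle\widetilde{C}f,f\rangle_B/\langle f,f\rangle_B$ on each $W_i$ and concludes via self-adjointness, whereas you compute the full bilinear form; these are equivalent, and your explicit remark on when the correction term $\langle f,1\rangle_{L^2(\mu)}\langle g,1\rangle_{L^2(\mu)}$ survives is in fact a touch more careful than the paper's write-up.
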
 \begin{proof} Write $\mathcal{P}_{d}(\RR^{n})=\bigoplus_{i=0}^{\left\lfloor \frac{d}{2}\right\rfloor }W_{i}$,
where $W_{i}:=\left\Vert x\right\Vert ^{2i}\mathcal{H}_{d-2i}(\RR^{n})$.

\noindent First note that for every $f\in W_{i}$, $g\in W_{j}$ with
$i\neq j$, we have, 
\[
\langle\tilde{C}f,g\rangle_{B}=\langle f,g\rangle_{L^{2}(\mu)}=R_{n}^{2d}\frac{n}{n+2d}\langle f,g\rangle_{L^{2}(\mathbb{S}^{n})}=0.
\]
The first equality is \eqref{eq:Covariance matrix with Bombieri},
the second is Lemma \ref{lem:integrals of harmonic polynomials} and
the third follows from the first item of Lemma \ref{lem:structure theory of homogeneous polynomials}.
We see that $\widetilde{C}(W_{i})$ is orthogonal to $W_{j}$ for
all $j\neq i$ and therefore $\widetilde{C}(W_{i})=W_{i}$. Furthermore,
the same reasoning shows that if $f\in W_{0}$, then 
\[
\langle\widetilde{C}f,f\rangle_{B}=\langle f,f\rangle_{L^{2}(\mu)}-\langle f,1\rangle_{L^{2}(\mu)}^{2}=R_{n}^{2d}\frac{n}{n+2d}\gamma_{d,n}\langle f,f\rangle_{B}.
\]
Similarly, any $f\in W_{i}$ can be written as $f=\left\Vert x\right\Vert ^{2i}\cdot g(x)$
with $g(x)=\sum a_{I}x^{I}\in\mathcal{H}_{d-2i}(\RR^{n})$. Now, if
$\Delta$ stands for the Laplacian, it is straightforward to verify
(e.g. \cite[4.5 and 5.22]{ABR01}) that $\Delta^{\circ i}(\|x\|^{2i}g(x))=b_{i}g(x)$,
where 
\begin{equation}
b_{i}:=2^{i}i!\prod_{j=1}^{i}(n+2d-2j-2i).\label{eq:b_i}
\end{equation}
Moreover, by \eqref{eq:Bombieri differential identity},
$\Delta^{\circ i}$ is the conjugate of multiplication by $\|x\|^{2i}$
with respect to the Bombieri inner product. Thus, 
\begin{equation}
\langle f,f\rangle_{B}=\langle\left\Vert x\right\Vert ^{2i}g(x),\left\Vert x\right\Vert ^{2i}g(x)\rangle_{B}=\langle g(x),\Delta^{\circ i}(\left\Vert x\right\Vert ^{2i}g(x))\rangle_{B}=b_{i}\langle g,g\rangle_{B}.\label{eq:laplacian on harmonics}
\end{equation}
Using \eqref{eq:laplacian on harmonics} we obtain, for $i<\frac{d}{2}$:
\begin{align*}
\langle\widetilde{C}f,f\rangle_{B} & =\langle f,f\rangle_{L^{2}(\mu)}-\langle f,1\rangle_{L^{2}(\mu)}^{2}=R_{n}^{2d}\frac{n}{n+2d}\cdot\langle g,g\rangle_{L^{2}(\mathbb{S}^{n})}\\
 & =R_{n}^{2d}\frac{n}{n+2d}\gamma_{d-2i,n}\langle g,g\rangle_{B}\\
 & =R_{n}^{2d}\frac{n}{n+2d}\cdot\frac{1}{n(n+2)...(n+2d-4i-2)}\langle g,g\rangle_{B}\\
 & =R_{n}^{2d}\frac{n}{n+2d}\cdot\frac{1}{2^{i}i!n(n+2)...(n+2d-2i-2)}\langle f,f\rangle_{B}.
\end{align*}
Finally, for $2i=d$ (and $d$ even), using Lemma \ref{lem:integrals of harmonic polynomials}
we have 
\begin{align*}
\langle\widetilde{C}\left\Vert x\right\Vert ^{d},\left\Vert x\right\Vert ^{d}\rangle_{B} & =\langle\left\Vert x\right\Vert ^{d},\left\Vert x\right\Vert ^{d}\rangle_{L^{2}(\mu)}-\langle\left\Vert x\right\Vert ^{d},1\rangle_{L^{2}(\mu)}^{2}\\
 & =\frac{n}{n+2d}R_{n}^{2d}-\left(\frac{n}{n+d}R_{n}^{d}\right)^{2}\\
 & =R_{n}^{2d}\left(\frac{n}{n+2d}-\frac{n^{2}}{n^{2}+2dn+d^{2}}\right)\\
 & =R_{n}^{2d}\frac{d^{2}n}{\left(n+2d\right)\left(n+d\right)^{2}}.
\end{align*}
Note that 
\[
\langle\left\Vert x\right\Vert ^{d},\left\Vert x\right\Vert ^{d}\rangle_{B}=b_{d/2}=2^{d/2}\left(\frac{d}{2}\right)!\prod_{j=1}^{d/2}(d+n-2j),
\]
so 
\[
\langle\widetilde{C}\left\Vert x\right\Vert ^{d},\left\Vert x\right\Vert ^{d}\rangle_{B}=\frac{R_{n}^{2d}}{b_{d/2}}\frac{d^{2}n}{\left(n+2d\right)\left(n+d\right)^{2}}\langle\left\Vert x\right\Vert ^{d},\left\Vert x\right\Vert ^{d}\rangle_{B},
\]
as required.

To put everything together, we have shown that every $W_{i}$ is a
$\tilde{C}$-invariant subspace and that the Rayleigh quotient $\frac{\langle\tilde{C}f,f\rangle_{B}}{\langle f,f\rangle_{B}}$
is constant on $W_{i}$. Since $\tilde{C}$ is self-adjoint with respect
to $\langle\ ,\ \rangle_{B}$ we can conclude that it is a constant
multiple of the identity on $W_{i}$ and the claim follows. \end{proof}
Since $\tilde{B}_{n}$ is isotropic, it is well known that $R_{n}=\sqrt{n+2}$.
Let us now understand the quantities $\eta_{i}$ better. If $i<\frac{d}{2}$,
we have, 
\begin{align}
n^{i}\geq\eta_{i} & \geq\frac{n^{d}}{n+2d}\cdot\frac{1}{2^{i}i!(n+2)...(n+2d-2i-2)}=\frac{n^{i}}{(1+\frac{2d}{n})}\cdot\frac{1}{2^{i}i!(1+\frac{2}{n})...(1+\frac{2d-2i-2}{n})}\nonumber \\
 & \geq\frac{n^{i}}{d+1}\cdot\frac{1}{2^{i}i!(d-i)!}\geq\frac{n^{i}}{(d+1)!}.\label{eq:lower bound}
\end{align}
For the first inequality, we have used the definition of $\eta_i$, according to which, as long as $n \geq 2$,
$$
\eta_{i}=R_{n}^{2d}\frac{n}{n+2d}\cdot\frac{1}{2^{i}i!n(n+2)...(n+2d-2i-2)}\leq\frac{(n+2)^{d}}{2^{i}i!(n+2)^{d-i}}\leq\left(\frac{n}{2}+1\right)^{i}\leq n^{i}.
$$
 For the last inequality in \eqref{eq:lower bound}, we used the elementary estimate $\binom{d}{i} \geq \left(\frac{d}{i}\right)^i$, which implies $\frac{d!}{i!(d-i)!} = \binom{d}{i} \geq 2^{i}$, whenever $i < \frac{d}{2}$.
Combining \eqref{eq:lower bound} with a similar calculation for $i=\frac{d}{2}$,
when $d$ is even, one has: 
\begin{equation}
\eta_{\frac{d}{2}}=\Theta_{d}\left(n^{\frac{d}{2}-2}\right)\text{ and }\eta_{i}=\Theta_{d}\left(n^{i}\right),\label{eq:magnitude of eigenvalues}
\end{equation}
where $\Theta_{d}$ means we omit constants which depend only on
$d$.\\

Thus, when $d\geq3$, Theorem \ref{thm:spectrum of Ctilde} and the
discussion above give the following dimension-free bound: 
\begin{equation}
\lambda_{1}(\tilde{C})=\eta_{0}\geq\frac{1}{(d+1)!}.\label{eq:smallest eigen}
\end{equation}
If, on the other hand, $d=2$, then the smallest eigenvalue is 
\begin{equation}
\lambda_{1}(\tilde{C})=\eta_{1}=(n+2)^{2}\frac{4}{2\left(n+4\right)\left(n+2\right)^{2}}=\frac{2}{n+4},\label{eq:pathological}
\end{equation}
and is of multiplicity one, with eigenvector $\sum_{i=1}^{n}x_{i}^{2}$.
Indeed, we have 
\begin{equation}
\lambda_{2}(\tilde{C})=\eta_{0}=\frac{n+2}{n+4}>\lambda_{1}(\tilde{C})\label{eq:second eigen pathological case}
\end{equation}

Since $C$ is a product of $\widetilde{C}$ with the diagonal matrix
$D^{-1}$ (with $D_{I,I}^{-1}=I!$) we can now use the spectrum of
$\widetilde{C}$ to deduce information on the spectrum of $C$. \begin{corollary}\label{Cor:spectrum of C}Write
$\lambda_{1}(C)\leq\dots\leq\lambda_{N}(C)$ for the spectrum of $C$,
with $N=\left(\begin{array}{c}
d+n-1\\
d
\end{array}\right)$. Then, the following estimates hold: 
\begin{enumerate}
\item For all $i$, we have 
\[
d!\lambda_{i}(\widetilde{C})\geq\lambda_{i}(C)\geq\lambda_{i}(\widetilde{C}),
\]
where $\lambda_{1}(\widetilde{C})\leq\dots\leq\lambda_{N}(\widetilde{C})$
are explicitly given by Theorem \ref{thm:spectrum of Ctilde}. 
\item ``\emph{pathological spectral gap}'': If $d=2$, $n\geq3$ then
the smallest eigenvalue $\lambda_{1}(C)$ has multiplicity one, with
eigenvector $\sum_{i=1}^{n}x_{i}^{2}$, and 
\[
\lambda_{1}(C)=\frac{4}{n+4}=O(n^{-1}).
\]
The rest of the eigenvalues are bounded from below by $\frac{5}{7}$. 
\item For $d\geq3$ we have a uniform lower bound on the eigenvalues 
\begin{equation}
\lambda_{i}(C)\geq\frac{1}{(d+1)!},\label{eq:lower bounds on C-eigen}
\end{equation}
for all $n$. If $n\geq d$, then the $\lambda_{1}(C)$-eigenspace
is spanned by monomials of the form $x_{i_{1}}\dots x_{i_{d}}$ with
$i_{1}<\dots<i_{d}$. Moreover, we have 
\[
\underset{n\rightarrow\infty}{\lim}\lambda_{1}(C)=\underset{n\rightarrow\infty}{\lim}\lambda_{1}(\tilde{C})=1.
\]
\end{enumerate}
\end{corollary}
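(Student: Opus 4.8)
The plan is to obtain everything from the spectral description of $\widetilde C$ in Theorem~\ref{thm:spectrum of Ctilde} through the relation $\widetilde C=DC$, where $D$ is the diagonal matrix with $D_{I,I}=1/I!$ in the monomial basis. \textbf{For Item (1)}, I would use that $1\le I!\le d!$ whenever $|I|=d$, so $\mathrm{Id}\preceq D^{-1}\preceq d!\,\mathrm{Id}$. Since $C$ is a covariance matrix it is positive semi-definite, so $\widetilde C=DC$ is similar (conjugate by $D^{1/2}$) to the symmetric positive semi-definite matrix $D^{1/2}CD^{1/2}$, and hence has the same non-negative eigenvalues. Applying the Courant--Fischer min--max characterization of $\lambda_i(D^{1/2}CD^{1/2})$ and substituting $w=D^{1/2}v$ turns the Rayleigh quotients into $\langle Cw,w\rangle/\langle D^{-1}w,w\rangle$, which lies between $\tfrac1{d!}\langle Cw,w\rangle/\langle w,w\rangle$ and $\langle Cw,w\rangle/\langle w,w\rangle$; taking the min over $i$-dimensional subspaces gives $\tfrac1{d!}\lambda_i(C)\le\lambda_i(\widetilde C)\le\lambda_i(C)$, i.e. $\lambda_i(\widetilde C)\le\lambda_i(C)\le d!\,\lambda_i(\widetilde C)$.

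\textbf{For Item (2)} ($d=2$, $n\ge3$), recall from \eqref{eq:pathological}--\eqref{eq:second eigen pathological case} that the smallest eigenvalue of $\widetilde C$ is $\eta_1=\tfrac2{n+4}$, simple with eigenvector $\sum_{i=1}^{n}x_i^2$, and every other eigenvalue equals $\eta_0=\tfrac{n+2}{n+4}$. On the span of the $x_i^2$ all multi-indices have $I!=2$, so there $D^{-1}$ is multiplication by $2$; thus $\widetilde C(\sum_i x_i^2)=\eta_1\sum_i x_i^2$ rewrites as $C(\sum_i x_i^2)=\eta_1D^{-1}(\sum_i x_i^2)=2\eta_1\sum_i x_i^2=\tfrac4{n+4}\sum_i x_i^2$, so $\tfrac4{n+4}$ is an eigenvalue of $C$. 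By Item (1), $\lambda_i(C)\ge\lambda_i(\widetilde C)=\eta_0=\tfrac{n+2}{n+4}\ge\tfrac57$ for every $i\ge2$; since $\tfrac4{n+4}<\tfrac57$ when $n\ge3$, the value $\tfrac4{n+4}$ must equal $\lambda_1(C)$ and must be simple (otherwise $\lambda_2(C)<\tfrac57$), with eigenvector $\sum_i x_i^2$, while the rest of the spectrum is $\ge\tfrac57$.

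\textbf{For Item (3)} assume $d\ge3$. The uniform bound is immediate: by Item (1) and \eqref{eq:smallest eigen}, $\lambda_i(C)\ge\lambda_1(\widetilde C)=\eta_0\ge\tfrac1{(d+1)!}$ for all $n$. For the eigenspace I would set $W:=\mathrm{span}\{x_{i_1}\cdots x_{i_d}:i_1<\cdots<i_d\}$, which is nonzero exactly because $n\ge d$. Using the invariance of $\mu$ under coordinate sign-flips and permutations, a direct computation shows that for square-free $I$ (so $|I|=d$, $I!=1$) one has $C_{K,I}=\delta_{K,I}\,\EE_\mu[x_1^2\cdots x_d^2]$; write $\lambda^{*}$ for this common value. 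Thus $W$ is $C$-invariant with $C|_W=\lambda^{*}\mathrm{Id}$. Moreover each square-free monomial is harmonic, so $W\subseteq\mathcal H_d(\RR^n)=W_0$, the $\eta_0$-eigenspace of $\widetilde C$; since $D$ is the identity on $W$, any $v\in W$ satisfies $\eta_0 v=\widetilde C v=DCv=\lambda^{*}Dv=\lambda^{*}v$, so $\lambda^{*}=\eta_0$. Combining with Item (1) and $\eta_0=\lambda_1(\widetilde C)$ gives $\lambda_1(C)=\lambda^{*}=\eta_0$; and since $\eta_0=(n+2)^d/\big((n+2d)(n+2)(n+4)\cdots(n+2d-2)\big)\to1$, we get $\lim_{n\to\infty}\lambda_1(C)=\lim_{n\to\infty}\lambda_1(\widetilde C)=1$. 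It then remains to identify the $\lambda^{*}$-eigenspace, and this last step is the one I expect to be the main obstacle, since Item (1) alone only bounds the multiplicity of $\lambda_1(C)=\eta_0$ by $\dim\mathcal H_d(\RR^n)$, which generally exceeds $\binom{n}{d}=\dim W$.

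To break this degeneracy I would use that $C$, and hence $\widetilde C=DC$ (as $D$ is diagonal), is block-diagonal in the monomial basis along the parity patterns $\sigma(I)=(I_i\bmod2)_i$, because $C_{I,J}=0$ whenever $\sigma(I)\ne\sigma(J)$. A pattern $s$ with $|s|=d$ supports a unique degree-$d$ monomial — the square-free one — giving a $1\times1$ block $[\lambda^{*}]$, and these blocks together span $W$. For a pattern $s$ with $|s|<d$, every degree-$d$ multi-index $I$ with $\sigma(I)=s$ has some coordinate $\ge2$ (else $|I|=|s|<d$), so $I!\ge2$ and the corresponding diagonal block $D^{(s)}$ of $D$ satisfies $D^{(s)}\preceq\tfrac12\mathrm{Id}$; writing $\widehat M=(D^{(s)})^{1/2}C^{(s)}(D^{(s)})^{1/2}$, which is similar to $D^{(s)}C^{(s)}$, I would conclude $C^{(s)}=(D^{(s)})^{-1/2}\widehat M(D^{(s)})^{-1/2}\succeq\lambda_{\min}(\widehat M)(D^{(s)})^{-1}\succeq2\lambda_{\min}(D^{(s)}C^{(s)})\,\mathrm{Id}\succeq2\eta_0\,\mathrm{Id}$, the last step because the spectrum of $D^{(s)}C^{(s)}$ is contained in that of $\widetilde C$ and hence bounded below by $\eta_0$ (this is precisely where $d\ge3$ enters: it forces $\eta_0$ to be the global minimum, whereas for $d=2$ that minimum is $\eta_1$ and the $|s|=0$ block produces the smaller value $2\eta_1=\tfrac4{n+4}$). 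So every block with $|s|<d$ has smallest eigenvalue $\ge2\eta_0>\eta_0=\lambda^{*}$ and contributes nothing to the $\lambda^{*}$-eigenspace, leaving exactly $W$, the span of the square-free degree-$d$ monomials.
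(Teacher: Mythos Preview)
Your argument is correct and, for Items (1), (2), and most of (3), follows essentially the same route as the paper: the paper obtains Item (1) by citing \cite[Theorem 3]{WZ92} for the eigenvalue comparison $\lambda_i(\widetilde C)\le\lambda_i(C)\le d!\,\lambda_i(\widetilde C)$, which is exactly the Courant--Fischer computation you wrote out; Item (2) is deduced from \eqref{eq:pathological}--\eqref{eq:second eigen pathological case} in the same way; and for Item (3) the paper also observes that square-free monomials are harmonic with $I!=1$, hence $Cx^I=D^{-1}\widetilde Cx^I=\eta_0 x^I$, and combines this with Item (1) and \eqref{eq:smallest eigen} to get $\lambda_1(C)=\eta_0$ and the limit.

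The one genuine difference is the eigenspace claim in Item (3). The paper's proof only verifies that the square-free monomials lie in the $\eta_0$-eigenspace of $C$; it does not argue that nothing else does. Your parity-block argument actually fills this gap: using that $C$ (and hence $\widetilde C=DC$) is block-diagonal along parity classes $\sigma(I)$, you show that on every block with $|s|<d$ one has $D^{(s)}\preceq\tfrac12\mathrm{Id}$ and hence $C^{(s)}\succeq 2\lambda_{\min}(D^{(s)}C^{(s)})\,\mathrm{Id}\succeq 2\eta_0\,\mathrm{Id}>\eta_0\,\mathrm{Id}$, while each $|s|=d$ block is the $1\times1$ block $[\eta_0]$. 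This is a clean, self-contained justification that the $\lambda_1(C)$-eigenspace is exactly $W$, and it is strictly more than what the paper's own proof supplies.
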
 \begin{proof} Note that $\tilde{C}=D\cdot C$ is
a product of two positive definite matrices. Since $D$ is a diagonal
matrix with diagonal entries in the range $[\frac{1}{d!},1]$, it
follows e.g. by \cite[Theorem 3]{WZ92} that: 
\[
d!\lambda_{i}(\tilde{C})\geq\lambda_{i}(C)\geq\lambda_{i}(\tilde{C}),
\]
which is Item (1). Item (2) now follows from \eqref{eq:pathological}
and \eqref{eq:second eigen pathological case}.\\
 If $n\geq d\geq3$, it is easy to verify that $\eta_{i+1}\geq\eta_{i}$
for all $0\leq i<\left\lfloor \frac{d}{2}\right\rfloor $. Item (1)
implies that $\lambda_{1}(C)\geq\lambda_{1}(\tilde{C})=\eta_{0}$.
On the other hand, monomials $x^{I}$ of the form $x_{i_{1}}\dots x_{i_{d}}$
with $i_{1}<\dots<i_{d}$ satisfy $\langle x^{I},x^{I}\rangle=\langle x^{I},x^{I}\rangle_{B}$
and they are harmonic, so 
\[
Cx^{I}=D^{-1}\widetilde{C}x^{I}=D^{-1}\eta_{0}x^{I}=\eta_{0}x^{I}.
\]
This shows that $\lambda_{1}(C)=\lambda_{1}(\tilde{C})=\eta_{0}$.
Finally, we have: 
\[
\underset{n\rightarrow\infty}{\lim}\lambda_{1}(C)=\underset{n\rightarrow\infty}{\lim}\lambda_{1}(\tilde{C})=\underset{n\rightarrow\infty}{\lim}\frac{(n+2)^{d}}{(n+2)...(n+2d-2)(n+2d)}=1
\]
which finishes Item (3). \end{proof}

\begin{remark}\label{rem:potential improvement} The lower bound
in \eqref{eq:lower bounds on C-eigen} can be improved, by considering
more refined estimates on the possible entries of $D$ and on $\eta_{i}$,
for small values on $n$. Since we already know that $\underset{n\rightarrow\infty}{\lim}\lambda_{1}(C)=\underset{n\rightarrow\infty}{\lim}\lambda_{1}(\tilde{C})=1$,
we chose to ignore this low-dimensional issue, and to keep the (slightly
non-optimal) current bound. \end{remark}

\subsubsection{\label{subsec:Further-discussion}Further discussion}

We conclude the section with a discussion on the asymptotic behavior
of spectrum of $C$ as well as on the general case of radial measures.

\subsubsection*{Partition of the spectrum into different asymptotic scales}

By combining Theorem \ref{thm:spectrum of Ctilde}, \eqref{eq:magnitude of eigenvalues}
and Item (1) of Corollary \ref{Cor:spectrum of C}, we see that the
eigenvalues $\lambda_{i}(C)$ can be partitioned into subsets $A_{0},...,A_{\left\lceil \frac{d}{2}-1\right\rceil }$
with respect to different asymptotic behaviors. The subset $A_{j}$
consists of eigenvalues which are of magnitude $\sim n^{j}$ (up to
a constants depending only on $d$). For $d=2$ there is an additional
eigenvalue $\lambda_{1}(C)\sim n^{-1}$ which belongs to a unique
asymptotic scale $A_{-1}$.

One may wonder whether this phenomenon can be generalized to other
families of measures with some form of symmetry. Namely, how general
is the situation where all eigenvalues of $\mathrm{Cov}(X^{\otimes d})$
converge to a discrete set of asymptotic scales as $n$ grows? In
particular, does it hold for the uniform measure on $L_{p}$ balls?

Let us consider the case when $X\sim\mathrm{Uniform}(\tilde{B}_{p,n})$
for $p$ an even natural number. Write $R_{n,p}$ for the radius of
$\tilde{B}_{p,n}$. Using a coordinate change, as in \eqref{eq:ppolarintegration},
it can be seen that the polynomial $f=\frac{1}{\sqrt{n}}\left\Vert x\right\Vert _{p}^{p}$
satisfies 
\begin{align}
\frac{\langle Cf,f\rangle}{\langle f,f\rangle} & =\mathrm{Var}\left(\frac{1}{\sqrt{n}}\|X\|_{p}^{p}\right)=\frac{1}{n}\left(\EE\left(\|X\|_{p}^{2p}\right)-\EE\left(\|X\|_{p}^{p}\right)^{2}\right)\nonumber \\
 & =\frac{R_{n,p}^{2p}}{n}\left(\frac{n}{n+2p}-\left(\frac{n}{n+p}\right)^{2}\right)\nonumber \\
 & =R_{n,p}^{2p}\frac{p^{2}}{(n+2p)(n+p)^{2}}=\Theta_{p}\left(n^{-1}\right).\label{eq:pathological Lp}
\end{align}
In particular, we see that the eigenvalues of $\mathrm{Cov}(X^{\otimes p})$,
are not bounded from below, in a way reminiscent of the Euclidean
case.

\subsubsection*{Radial measures}

The results of this section generalize to radial measures
of the form $\frac{d\mu}{dx}=\rho(\|x\|_{2})$, for some $\rho:\RR_{\geq0}\to\RR_{\geq0}$.
Indeed, the only difference lies at Lemma \ref{lem:integrals of harmonic polynomials},
where now we will have, 
\[
\langle f,g\rangle_{_{L^{2}(\mu)}}=\beta_{\mu,d}\langle f,g\rangle_{_{L^{2}(\mathbb{S}^{n})}},
\]
with, 
\[
\beta_{\mu,d}:=n\mathrm{Vol}(B_{n})\int\limits _{0}^{\infty}r^{n+2d-1}\rho(r)dr
\]
($\beta_{\mu,d}=\frac{n}{n+2d}R_{n}^{2d}$ in the case of the isotropic
Euclidean ball). In particular, the matrix $\widetilde{C}$ has the
same eigenspace decomposition as in Theorem \ref{thm:spectrum of Ctilde},
with eigenvalues $\eta_{\mu,i}=\eta_{i}\cdot\beta_{\mu,d}\cdot\frac{n+2d}{nR_{n}^{2d}}$,
 $i<\frac{d}{2}$, and $\eta_{\mu,\frac{d}{2}}=\frac{\beta_{\mu,d}-\beta_{\mu,d/2}^{2}}{b_{d/2}}$,
with $b_{d/2}$ as defined in \eqref{eq:b_i}. Consequently, Items
(1) and (3) of Corollary \ref{Cor:spectrum of C} hold for radial
measures as well, with slightly different lower bounds. Item (2),
i.e. the ``pathological spectral gap'' phenomenon, is true only
for certain classes of measures, and depends on $\beta_{\mu,d}.$
For example, for $\gamma_{n}$, the standard Gaussian in $\RR^{n}$,
a calculation shows, 
\[
\beta_{\gamma_{n},d}=\frac{n\mathrm{Vol}(B_{n})}{\sqrt{2\pi}^{n}}\int\limits _{0}^{\infty}r^{n+2d-1}e^{-r^{2}/2}dr=\frac{2^{d}n\mathrm{Vol}(B_{n})}{2\pi^{\frac{n}{2}}}\Gamma\left(\frac{n}{2}+d\right)=2^{d}\frac{\Gamma\left(\frac{n}{2}+d\right)}{\Gamma\left(\frac{n}{2}\right)}.
\]
Note that for $d=2$, we have 
\[
\eta_{\gamma_{n},1}=\frac{2}{n}\left(\frac{\Gamma\left(\frac{n}{2}+2\right)}{\Gamma\left(\frac{n}{2}\right)}-\frac{\Gamma\left(\frac{n}{2}+1\right)^{2}}{\Gamma\left(\frac{n}{2}\right)^{2}}\right)=\frac{1}{2n}\left(n(n+2)-n^{2}\right)=1.
\]
So, there is no pathological spectral gap. Thus, we can see that, in
contrast to the Euclidean ball, the spectrum $\eta_{\gamma_{n},i}$
is bounded uniformly from below, which is consistent with Theorem
\ref{thm:product}.

In fact, among all log-concave and isotropic radial measures, the
Euclidean ball is the extremal case, for which the pathological eigenvector
$\|x\|_{2}^{2}$ has the smallest eigenvalue. This is related to the
\emph{thin-shell} phenomenon, which states that every log-concave
and isotropic measure should be well concentrated around a Euclidean
sphere. A lower bound for thin-shell was proven in \cite[Theorem 2]{BK03},
where it was shown that $\mathrm{Var}\left(\frac{1}{\sqrt{n}}\|X\|_{2}^{2}\right)\geq\frac{4}{n+4}$
for every isotropic and log-concave $X$ in $\RR^{n}$, satisfying
a certain monotonicity assumption. As we have seen above, the minimum
is attained when $X\sim\mathrm{Uniform}(\tilde{B}_{2,n})$.

\bibliographystyle{plain}
\bibliography{bib}

\end{document}